\newcommand{\texttit}{\textit}
\newcommand{\textresto}{\textup}
\newtheorem{theorem}{Theorem}[section]
\newtheorem{definition}{Definition}[section]
\newtheorem{lemma}[theorem]{Lemma}
\newtheorem{proposition}[theorem]{Proposition}
\newtheorem{corollary}[theorem]{Corollary}
\newtheorem*{remark}{Remark}
\DeclareMathOperator*{\var}{Var}
\begin{document}

\title{The entropy of Nakada's $\alpha$-continued fractions: analytical results}
\author{Giulio Tiozzo}

\begin{abstract}
We study the ergodic theory of a one-parameter family of interval maps $T_\alpha$ arising from generalized continued fraction algorithms. 
First of all, we prove the dependence of the metric entropy of $T_\alpha$ to be H\"older-continuous in the parameter $\alpha$.
Moreover, we prove a central limit theorem for possibly unbounded observables whose bounded variation 
grows moderately. This class of functions is large enough to cover the case of Birkhoff averages converging to the entropy.
\end{abstract}

\maketitle

\begin{center} 
\small MSC: 11K50, 37A10 (Primary) 37A55, 37E05 (Secondary)
\end{center}

\section{Introduction}

Let $\alpha \in [0,1]$. Let us define the map $T_{\alpha} : [\alpha-1, \alpha] \rightarrow [\alpha-1, \alpha] $ as $T_\alpha(0) = 0$ and 
\begin{equation} \label{definition}
T_{\alpha}(x) = \frac{1}{|x|} - a_{\alpha}(x)
\end{equation}
with $a_{\alpha}(x) := \left\lfloor \frac{1}{|x|} + 1 - \alpha \right\rfloor$. 
These systems were introduced by Nakada \cite{Nakada81} and are known in the literature as \emph{$\alpha$-continued fractions}, or 
 \emph{Japanese continued fractions}. 
By taking $x_{n, \alpha} = T_{\alpha}^n(x)$, $a_{n, \alpha} = a_{\alpha}(x_{{n-1}, \alpha})$, $\epsilon_{n, \alpha} = \textup{Sign}(x_{n-1, \alpha})$, 
the orbit under $T_\alpha$ generates the generalized continued fraction expansion 
$$ x = a_{0, \alpha}  + \frac{\epsilon_{1, \alpha}}{a_{1,\alpha} + \frac{\epsilon_{2,\alpha}}{a_{2, \alpha} +_{\ddots}}} $$ 


The algorithm, analogously to the Gauss map in the classical case, provides rational approximations of real numbers. 
It is known that for each $\alpha \in (0,1]$ there exists a unique invariant measure $\mu_{\alpha}(dx) = \rho_{\alpha}(x)dx$ absolutely 
continuous w.r.t. Lebesgue measure, and this measure is ergodic (see \cite{LuzziMarmi}). In this paper we will focus on the metric entropy 
of the $T_{\alpha}$'s, which is given by Rohlin's formula (see \cite{Rohlin})

\begin{equation} \label{Rohlin}
h(T_{\alpha}) = \int_{\alpha-1}^{\alpha} \log |T_\alpha'| d\mu_\alpha
\end{equation}

Nakada \cite{Nakada81} computed exact values of $h(T_\alpha)$ for $\alpha \geq \frac{1}{2}$, showing 
that in this interval $h(T_\alpha)$ is continuous, and smooth except for the point $\alpha = \frac{\sqrt{5}-1}{2}$, where the left and right
 derivatives do not coincide. 

In \cite{LuzziMarmi}, Luzzi and Marmi studied the behaviour of $h(T_\alpha)$ as a function of $\alpha$ for all parameters $\alpha \in (0,1)$.
 They gave numerical evidence that this function is continuous but not smooth. Nakada and Natsui 
\cite{NakadaNatsui} then proved that this function is indeed not monotone, by giving explicit construction of infinitely many monotonicity 
intervals. An extensive numerical study of these intervals has been carried out in \cite{CMPT}, and a complete characterization of 
all monotonicity intervals is given in \cite{CT}. Even though the entropy is conjecturally smooth on any such interval, there are 
points at which it is not even locally monotone, and the set of bifurcation parameters has a complicated 
self-similar structure. 

The first major result of this paper is the

\begin{theorem} \label{Holdermain}
The entropy function $\alpha \mapsto h(T_\alpha)$ is H\"older-continuous of any exponent $0 < s < \frac{1}{2}$.
\end{theorem}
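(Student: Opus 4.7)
The plan is to bound $|h(T_\alpha)-h(T_\beta)|$ for $\beta = \alpha+\epsilon$ with $\epsilon$ small, and then let the Hölder exponent tend to $1/2$. Using Rohlin's formula \eqref{Rohlin} together with $\log|T_\alpha'(x)| = -2\log|x|$, the problem reduces to estimating
$$\left|\int_{\alpha-1}^{\alpha}\log|x|\,\rho_\alpha(x)\,dx - \int_{\beta-1}^{\beta}\log|x|\,\rho_\beta(x)\,dx\right|.$$
I would split this into two contributions: the change coming from the two small boundary strips $(\alpha,\beta)$ and $(\alpha-1,\beta-1)$, plus the change $\int\log|x|\,(\rho_\alpha-\rho_\beta)\,dx$ on a common inner interval. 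Because $\rho_\alpha$ is known to be uniformly bounded above and below on compacta avoiding $0$, and $\log|x|$ is integrable at $0$, the boundary strips contribute at most $O(\epsilon\,|\log\epsilon|)$ and play a harmless role.

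The heart of the matter is the $L^1$-comparison of the invariant densities. I would attack this via perturbation theory for the Perron--Frobenius operators $\mathcal{L}_\alpha$. First, one establishes a \emph{uniform} Lasota--Yorke inequality on functions of bounded variation, giving a uniform spectral gap for $\mathcal{L}_\alpha$. Second, one observes that the branches of $T_\alpha$ and $T_\beta$ agree except on a small set: the boundary strips of width $\epsilon$ and possibly finitely many cylinders where the digit $a_\alpha$ jumps. This structural comparison should yield a bound of the form
$$\|(\mathcal{L}_\alpha-\mathcal{L}_\beta)f\|_{L^1} \;\lesssim\; \epsilon\,\|f\|_{BV},$$
that is, closeness in the weak operator norm from $BV$ to $L^1$. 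Together with the uniform spectral gap, the Keller--Liverani stability theorem then produces a Hölder estimate $\|\rho_\alpha-\rho_\beta\|_{L^1}\lesssim \epsilon^\sigma$ for every $\sigma<1$ (and analogous $BV$-bounds with a worse exponent).

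To conclude, I would combine this density estimate with a truncation argument near $x=0$: cut the observable $\log|x|$ at a scale $\delta$, bound the inner piece by $|\log\delta|\cdot\|\rho_\alpha-\rho_\beta\|_{L^1}$ and the tail by the uniform density bound times $\delta|\log\delta|$, and finally optimize $\delta$ against $\epsilon$. This optimization is what loses the exponent from $1$ down to $1/2$, since one is balancing an error growing like $|\log\delta|\cdot\epsilon^\sigma$ against one shrinking like $\delta|\log\delta|$ under the additional constraints imposed by the $BV$-norm of $\rho_\alpha$ near $0$.

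The main obstacle, in my view, is producing the uniform Lasota--Yorke inequality and uniform spectral gap: the map $T_\alpha$ has infinitely many branches accumulating near $0$, so the distortion and variation estimates must be summed carefully with constants independent of $\alpha\in(0,1)$. Establishing the uniform gap is especially delicate near parameters where the matching condition of Nakada--Natsui \cite{NakadaNatsui} fails, since there the combinatorics of the branches changes abruptly and one must still prevent the second eigenvalue of $\mathcal{L}_\alpha$ from approaching $1$.
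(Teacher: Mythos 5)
Your overall architecture (uniform Lasota--Yorke in $BV$, Keller--Liverani stability to compare invariant densities in $L^1$, then Rohlin's formula with a separate treatment of the $\log$ singularity at $0$) is the same as the paper's. But there is a genuine gap at the crucial quantitative step: the operator-closeness bound $\|(\mathcal{L}_\alpha-\mathcal{L}_\beta)f\|_{L^1}\lesssim\epsilon\,\|f\|_{BV}$ is only asserted (``should yield''), and the heuristic behind it is not right as stated. After rescaling both maps to the common interval $[0,1]$ the two transformations do \emph{not} agree off a small set: they differ by order $\epsilon$ essentially everywhere, and the comparison must go through a near-identity change of variable $y$ with $\tilde{T}_\alpha=\tilde{T}_\beta\circ y$. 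The paper feeds this into Keller's lemma, which requires $y$ to be replaced by a genuine diffeomorphism of $[0,1]$; patching $y$ at the endpoints with linear bridges costs derivative distortion of order $\epsilon^{1/2}$, so one only gets $d(\tilde{T}_\alpha,\tilde{T}_\beta)=O(|\alpha-\beta|^{1/2})$ and hence $\vert\!\vert\!\vert\Phi_\alpha-\Phi_\beta\vert\!\vert\!\vert=O(|\alpha-\beta|^{1/2})$. This square root --- not the truncation of $\log|x|$ --- is the source of the exponent $\frac12$ in the theorem. Your own bookkeeping betrays the problem: if your claimed $O(\epsilon)$ bound held, Keller--Liverani would give $\|\rho_\alpha-\rho_\beta\|_{L^1}\lesssim\epsilon^{\sigma}$ for all $\sigma<1$, and your truncation at scale $\delta\sim\epsilon$ would then yield H\"older continuity of every exponent $s<1$, a strictly stronger statement; the claim that the $\delta$-optimization ``loses the exponent from $1$ down to $1/2$'' has no mechanism behind it. So either you must actually prove the $O(\epsilon)$ operator estimate (which is not done and is the heart of the matter), or you must identify, as the paper does, where the weaker $\epsilon^{1/2}$ enters.

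Two smaller remarks. First, your worry that the second eigenvalue might approach $1$ near parameters where matching fails is not the relevant difficulty: once the Lasota--Yorke constants are locally uniform in $\alpha$ and the operators are close in the mixed $BV\to L^1$ norm, Keller--Liverani itself controls the spectrum outside a small disk and the simplicity of the peripheral eigenvalue (exactness of each $T_\alpha$ gives simplicity for each fixed $\alpha$). The real technical core is the uniform Lasota--Yorke inequality itself, which you correctly flag but underestimate: the paper needs a delicate decomposition $g_{n,\alpha}=h_{n,\alpha}+l_{n,\alpha}$ to control the infinitely many jumps of $g_{n,\alpha}$ uniformly in $\alpha$ (the earlier argument in the literature for this uniformity is flawed). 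Second, your truncation treatment of $\log|x|$ is essentially equivalent to the paper's use of H\"older's inequality with $\|\rho_\alpha-\rho_\beta\|_{L^p}$ (interpolated from the $L^1$ estimate and the locally uniform $BV$, hence $L^\infty$, bound) against $\|\log\|_{L^{p/(p-1)}}$, together with the elementary bound $\|\log|y+\alpha-1|-\log|y+\beta-1|\|_{L^1}=O(-|\alpha-\beta|\log|\alpha-\beta|)$ coming from the shift of the observable; that part of your plan is fine.
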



The proof follows from spectral analysis of the transfer operator acting on the space of functions of bounded variation. 
First (section \ref{LY}), we prove a uniform bound on the essential spectral radius (Lasota-Yorke inequality).
 Then (section \ref{Kellerdistsect}), we prove that a suitable distance between the transformations $T_\alpha$ is H\"older-continuous 
in $\alpha$, and use a stability result of the spectral decomposition \cite{KellerLiverani} to prove H\"older-continuity of the invariant
 densities $\rho_\alpha$ in $L^1$-norm. Note that invariant densities are not continuous in $BV$-norm (see remark \ref{rmk}).

\medskip
The second part of the paper deals with central limit theorems.
In \cite{LuzziMarmi}, entropy is computed by approximating it with Birkhoff averages for the observable $\log |T_\alpha'|$, and
 numerical evidence is given (\cite{LuzziMarmi}, figure 3) that Birkhoff sums for different orbits distribute normally around the average. 
We first show (section \ref{CLTBVsect}) that the methods of \cite{Broise} can be used to prove a central limit theorem 
for observables of bounded variation. 

Moreover, in section \ref{CLTunbsect} we expand the class of observables we use in order to encompass unbounded observables such as the logarithm.
Indeed, we will define a new family of Banach spaces $B_{K, \delta}$, consisting of possibly unbounded functions
whose total variation grows slowly on intervals which approach zero. Such functions will be called \emph{of mild growth},
 and we prove the central limit theorem to hold in these larger spaces: 


\begin{theorem} \label{CLTunbounded}
Let $\alpha \in (0,1]$, $0 < \delta < \frac{1}{2}$, and $K$ sufficiently large. Then, for every non-constant real-valued $f \in B_{K, \delta}$ there exists $\sigma > 0$ s.t.
$$\lim_{n \rightarrow \infty} \mu_{\alpha}\left( \frac{S_n(f - \int_{I_\alpha} f d\mu_\alpha)}{\sqrt{n}} \leq v \right) = \frac{1}{\sigma \sqrt{2\pi}}\int_v^{+\infty} e^{-\frac{t^2}{2\sigma^2}} dt \qquad \forall v \in \mathbb{R}$$
\end{theorem}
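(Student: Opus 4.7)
\bigskip

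\noindent\textbf{Proof plan.} My plan is to run the Nagaev--Guivarc'h characteristic-function argument, but with the Banach space of bounded variation functions replaced throughout by the mild-growth space $B_{K,\delta}$. The first step is to promote the spectral picture of Section \ref{LY} to $B_{K,\delta}$: verify that $B_{K,\delta}$ is a Banach space, that the transfer operator $L_\alpha$ acts continuously on it, and that a Lasota--Yorke inequality of the form $\|L_\alpha^n g\|_{B_{K,\delta}} \le C r^n \|g\|_{B_{K,\delta}} + C\|g\|_{L^1}$ with $r<1$ survives when the $BV$ seminorm is replaced by the mild-growth seminorm. Combined with the known ergodicity and mixing of $T_\alpha$, this yields a spectral gap for $L_\alpha$ on $B_{K,\delta}$: the eigenvalue $1$ is simple with eigenfunction $\rho_\alpha$, and the rest of the spectrum is contained in a disc of radius strictly less than one.

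The second step is the perturbation. Given real-valued $f\in B_{K,\delta}$, define the twisted operators $L_t(g):=L_\alpha(e^{itf}g)$ for $t\in\mathbb{R}$. The delicate point is to show that $e^{itf}$ acts as a bounded multiplier on $B_{K,\delta}$ and that $t\mapsto L_t$ is $\mathcal{C}^2$ in the operator norm on $B_{K,\delta}$ for $|t|$ small. Concretely, I would bound the mild-growth seminorm of $e^{itf}g - g - itfg + \tfrac{1}{2}t^2 f^2 g$ on each dyadic interval near $0$, using that the variation of $f$ grows like a slowly increasing function controlled by the parameter $\delta$; the constraint $\delta<\tfrac{1}{2}$ and $K$ large is exactly what is needed for the chain rule estimates on $e^{itf}$ to close. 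This is the main obstacle of the proof, since the unboundedness of $f$ makes the naive estimates on $e^{itf}$ diverge, and one must exploit the mild-growth control quantitatively.

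Once $t \mapsto L_t$ is smooth enough near $0$, Kato's perturbation theorem gives, for $|t|$ small, a simple dominant eigenvalue $\lambda(t)$ of $L_t$ on $B_{K,\delta}$, analytic in $t$, with associated eigenprojector $\Pi_t$ depending smoothly on $t$, and with $|\lambda(t)| > r'$ for some $r'$ larger than the spectral radius of $L_t$ on the complementary invariant subspace. Expanding as usual gives $\lambda(t)=1+it\int f\,d\mu_\alpha-\tfrac{1}{2}\sigma^2 t^2+o(t^2)$ with
\[
\sigma^2 \;=\; \int (f-\bar f)^2\,d\mu_\alpha \;+\; 2\sum_{n\ge 1}\int (f-\bar f)\,(f-\bar f)\circ T_\alpha^n\,d\mu_\alpha,
\]
the series being absolutely convergent by the spectral gap. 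The characteristic function of the normalized Birkhoff sum then reads
\[
\int_{I_\alpha} e^{it S_n(f-\bar f)/\sqrt{n}}\,d\mu_\alpha \;=\; \int L_{t/\sqrt{n}}^n(\rho_\alpha)\,dx + O(r'^n),
\]
and the dominant eigenvalue contribution $\lambda(t/\sqrt{n})^n$ converges to $e^{-\sigma^2 t^2/2}$; Lévy's continuity theorem concludes.

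Finally I must show $\sigma>0$ when $f$ is non-constant. Standard arguments identify $\sigma=0$ with $f-\bar f$ being a measurable coboundary $g-g\circ T_\alpha$; one then rules this out using ergodicity and a Livsic-type obstruction together with the non-trivial expansion of $T_\alpha$, arguing by contradiction that a coboundary in $L^2(\mu_\alpha)$ forces $f$ to be cohomologous to a constant and hence, by connectedness and the transitive orbit structure of $T_\alpha$, to be constant itself.
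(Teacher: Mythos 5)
There is a genuine gap at the heart of your plan. You propose to run the Nagaev--Guivarc'h argument on $B_{K,\delta}$ via the twisted operators $L_t(g)=L_\alpha(e^{itf}g)$, and you yourself flag the key point --- that $e^{itf}$ is a bounded multiplier and $t\mapsto L_t$ is $C^2$ in operator norm on $B_{K,\delta}$ --- but you only assert that ``$\delta<\tfrac12$ and $K$ large is exactly what is needed'' without an argument, and the naive estimates do not close. Indeed, for $g\in B_{K,\delta}$ one has $\sup_{L_k}|g|\lesssim k^{\delta}\Vert g\Vert_{K,\delta}$ (lemma \ref{powergrowth}) and $\var_{L_k}(e^{itf})\le |t|\var_{L_k}f\lesssim |t|\,k^{\delta}$, so $\var_{L_k}(e^{itf}g)\lesssim k^{2\delta}$: multiplication by $e^{itf}$ (and a fortiori by $f$ or $f^2$, which appear when you differentiate in $t$) degrades the growth exponent from $\delta$ to $2\delta$ and worse. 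Thus the twisted operators are not obviously bounded on $B_{K,\delta}$, and the required smoothness of $t\mapsto L_t$ on a \emph{fixed} space is exactly where the unboundedness of $f$ bites; the condition $\delta<\tfrac12$ does not rescue this. This is precisely why the paper abandons the perturbed-operator route for unbounded observables: after establishing the spectral gap of the \emph{unperturbed} operator on $B_{K,\delta}$ (theorem \ref{decBkdelta}), it applies the martingale (Gordin-type) CLT of (\cite{Viana}, Thm.\ 2.11), for which one only needs $\sum_n\Vert\mathbb{E}(f\mid\mathcal{F}_n)\Vert_{L^2(\mu_\alpha)}<\infty$; that follows from the exponential decay of $\Phi_\alpha^n(f\rho_\alpha)$ in $B_{K,\delta}$-norm together with the embedding $B_{K,\delta}\subseteq L^2$, which is where $\delta<\tfrac12$ is actually used. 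If you insist on the characteristic-function route you would have to work with a scale of spaces (or a Keller--Liverani weak/strong pair) rather than a single $B_{K,\delta}$, and that machinery is not set up in your sketch.

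Your treatment of the non-degeneracy $\sigma>0$ is also too vague to stand. An $L^2$ coboundary $f-\bar f=u-u\circ T_\alpha$ with non-constant $f$ is not ruled out by ``ergodicity, connectedness and transitive orbit structure'' --- coboundaries of non-constant $L^2$ functions exist in abundance. The actual mechanism is a regularity bootstrap plus a quantitative obstruction: from the spectral gap the transfer function is $u=-\sum_{j\ge1}\Phi_\alpha^j(f\rho_\alpha)/\rho_\alpha$, which converges in $B_{K,\delta}$ (using $1/\rho_\alpha\in BV$), so $u\circ T_\alpha=u-f\in B_{K,\delta}$; but summing the variation of $u\circ T_\alpha$ over the full cylinders $I_j\subset L_k^+$ gives $\var_{L_k^+}(u\circ T_\alpha)\ge (k-j_{min})\var_{I_\alpha}u$, growing linearly in $k$, while membership in $B_{K,\delta}$ only allows growth $k^{\delta}$ with $\delta<1$; hence $\var u=0$ and $f$ is constant (lemma \ref{Bcohom}). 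You need this (or an equivalent) argument spelled out; a generic ``Livsic-type obstruction'' does not substitute for it.
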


As a corollary, Birkhoff sums for the observable $\log |T_\alpha'|$ distribute normally around the average value $h(T_\alpha)$.

Finally, in section \ref{last} we discuss the dependence of the standard deviation of Birkhoff averages on the parameter $\alpha$. 
More precisely, given some observable $f$ of class $C^1$, for which we proved the central limit theorem to hold, we will 
prove that the variance $\sigma^2_{\alpha, f}$ of the limit Gaussian distribution is continuous in $\alpha$. 
The result is motivated by numerical data in (\cite{CMPT}, section 2.3).

\medskip
Many different authors have studied the spectral 
properties of transfer operators of expanding maps. For instance, 
a spectral decomposition for individual expanding maps
is proved in \cite{Broise},  \cite{Rychlik} and \cite{Viana}. 
A brief historical account with references is given in \cite{KellerLiverani}.
In our case, however, it is essential to prove estimates on the spectral radius which are uniform in $\alpha$.
Since new branches of $T_\alpha$ appear as $\alpha$ moves, and $T_\alpha$ develops an indifferent fixed point
as $\alpha \to 0$, proving uniformity requires more work. 
Unfortunately, although a uniform Lasota-Yorke inequality holds, the
proof provided by \cite{LuzziMarmi} contains a bug; we shall therefore
produce a new proof in prop. \ref{LY}.
Another proof of continuity (not H\"older) of entropy is given in the very recent paper \cite{KSS}
 via a study of natural extensions. 

Let us finally remark that our functional-analytic methods only use a few properties of $T_\alpha$, hence they 
can be applied to a wider class of one-parameter families of expanding interval maps.  
For instance, they apply to the case of $(a,b)$-continued fraction transformations studied in \cite{KU}
for parameters on the critical line $b-a = 1$. 



\section{Basic properties} 

Let us start by setting up the framework needed for the rest of the paper, and establishing a basic spectral decomposition
for the transfer operator. The literature on thermodynamic formalism for interval maps is huge: the sources we mainly refer to are  
\cite{Broise}, \cite{Rychlik} and \cite{Viana}, which already make use of functions of bounded variation. 

The total variation of a function $f$ on a set $X \subseteq \mathbb{R}$ is 
$$\var_X f := \sup \sum_{i = 1}^n |f(x_i)-f(x_{i+1})|$$
where the sup is taken over all finite increasing sequences ${x_1 \leq x_2 \leq \dots \leq x_n}$ of points of $X$.
Given an interval $I$, let us denote $BV(I)$ the Banach space of complex-valued bounded variation functions of the interval $I$,
modulo equality almost everywhere. The space is endowed with the norm
$$ \Vert f \Vert_{BV(I)} := \inf \left\{ \var_{I} g + \int_{I} |g(x)| dx  \ : \ g = f \ a.e.\right\}$$
Observe that every $f \in BV(I)$ has a (not necessarily unique) representative of minimal total variation, namely such that 
$$f(x) \in [\lim_{y \to x^-} f(y), \lim_{y \to x^+} f(y) ] \qquad \forall x \in I$$
In the following, we will always choose representatives for our functions of minimal variation. Other basic properties of total variation are stated in the appendix.

\subsection{Cylinders} \label{cyl}

For each $\alpha \in  (0,1)$, the dynamical system $T_\alpha$ defined in the introduction acts on the interval 
$I_{\alpha} := [\alpha-1, \alpha]$.
Observe that there exists a partition of $I_\alpha$ in a countable number of intervals $I_j$ 
such that for every $j$ the restriction $T_{\alpha}\mid_{I_j}$ is a strictly monotone, $C^{\infty}$ function and it extends to a $C^{\infty}$ 
function on the closure of every $I_j$. The least fine of such partitions will be called $\mathcal{P}_1$, the \emph{partition associated } to 
$T_{\alpha}$. 
More specifically, $\mathcal{P}_1 = \{I_j^+\}_{j \geq j_{min}} \cup \{I_j^- \}_{j \geq 2}$ with $j_{min} = \lceil \frac{1}{\alpha} - \alpha \rceil$ where 
$$I_j^+ = \left( \frac{1}{j+\alpha}, \frac{1}{j-1+\alpha} \right) \textup{ if } j\geq j_{min}+1 \quad I_{j_{min}}^+ = \left( \frac{1}{j_{min}+\alpha}, \alpha \right)$$
$$I_j^- = \left( -\frac{1}{j-1+\alpha}, -\frac{1}{j+\alpha}  \right) \textup{ if } j\geq 3 \quad I_{2}^- = \left(\alpha-1,  \frac{1}{2+\alpha} \right)$$
Moreover, for every $n > 1$, the set
$$\{ I_{j_1}^{\epsilon_1} \cap T_{\alpha}^{-1}(I_{j_2}^{\epsilon_2}) \cap \dots \cap T^{-(n-1)}(I_{j_n}^{\epsilon_{n}})\ |\ I_{j_1}^{\epsilon_1}, \dots, I_{j_n}^{\epsilon_n} \in \mathcal{P}_1 \}$$
where $\epsilon_i\in \{+,-\}$, is a partition of $I_{\alpha}$ in a countable number of intervals such that on each of these the restriction 
of  $T_{\alpha}^n$ is monotone and $C^{\infty}$: such a partition will be denoted by $\mathcal{P}_n$ and its elements called \emph{cylinders}. 
The cylinder $I_{j_1}^{\epsilon_1} \cap T_{\alpha}^{-1}(I_{j_2}^{\epsilon_2}) \cap \dots \cap T^{-(n-1)}(I_{j_n}^{\epsilon_{n}})$ will be denoted 
either by $(I_{j_1}^{\epsilon_1}, \dots, I_{j_n}^{\epsilon_n})$ or by $((j_1, \epsilon_1), \dots, (j_n, \epsilon_n))$.
The cylinders $I_j \in \mathcal{P}_n$ such that $T_\alpha^n(I_j) = I_\alpha$ will be called \emph{full cylinders}.

Let us define the function 
$$g_{n, \alpha}(x) := \sum_{j \in \mathcal{P}_n} \frac{1}{|(T^n_{\alpha})'(x)|}\chi_{I_j}(x)$$
The following estimates, proven in the appendix, will be used throughout the paper:
 
\begin{proposition} \label{classeC}
For every $\alpha \in (0, 1)$ and for every $n \geq 1$

\begin{enumerate}
 \item 
$$ \Vert g_{n, \alpha} \Vert_\infty \leq \gamma_\alpha^n $$
where $\gamma_\alpha := \max\{ \alpha^2, (\alpha-1)^2 \}$.
\item 
$$ \sup_{j \in \mathcal{P}_n} \sup_{x \in I_j} \left| g_{n, \alpha}'(x) \right| \leq \frac{2}{1-\gamma_\alpha } $$
\item The set $\{ T^n_{\alpha}(I_j)\ |\ I_j \in \mathcal{P}_n \}$ is finite; more precisely, 
$$ \#\{ T^n_{\alpha}(I_j)\ |\ I_j \in \mathcal{P}_n \} \leq 2n + 1$$
\item
The total variation of $g_{1, \alpha}$ is universally bounded, i.e. there is a constant $C_0$ such that 
$$ \var_{I_\alpha} g_{1, \alpha} \leq C_0 < +\infty \qquad \forall \alpha \in (0,1)$$ 

\end{enumerate}
\end{proposition}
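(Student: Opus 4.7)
My plan is to treat the four items separately, exploiting the explicit form $T_\alpha(x) = \pm 1/x - j$ on each branch, so that $|T_\alpha'(x)| = 1/x^2$ and $|T_\alpha''(x)| = 2/|x|^3$, with the uniform bound $|x| \leq \max\{\alpha, 1-\alpha\} = \sqrt{\gamma_\alpha}$ for every $x \in I_\alpha$.

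For (1), the chain rule gives $|(T_\alpha^n)'(x)| = \prod_{k=0}^{n-1} |T_\alpha'(x_k)| \geq \gamma_\alpha^{-n}$, which immediately yields $\Vert g_{n,\alpha} \Vert_\infty \leq \gamma_\alpha^n$. For (2), I would start from the logarithmic-derivative identity
\[
\frac{(T_\alpha^n)''(x)}{((T_\alpha^n)'(x))^2} \;=\; \sum_{k=0}^{n-1} \frac{T_\alpha''(x_k)}{T_\alpha'(x_k)^2 \,(T_\alpha^{n-k-1})'(x_{k+1})},
\]
use the explicit bound $|T_\alpha''(x)/T_\alpha'(x)^2| = 2|x| \leq 2\sqrt{\gamma_\alpha}$, invoke part (1) to control $1/|(T_\alpha^{n-k-1})'(x_{k+1})|$ by $\gamma_\alpha^{n-k-1}$, and sum the resulting geometric series to obtain $\tfrac{2\sqrt{\gamma_\alpha}}{1-\gamma_\alpha} \leq \tfrac{2}{1-\gamma_\alpha}$.

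For (3), I would argue by induction that $N_n := \#\{T_\alpha^n(I_j) : I_j \in \mathcal{P}_n\}$ satisfies $N_{n+1} \leq N_n + 2$. Writing an $(n{+}1)$-cylinder as $I_{j_1}^{\epsilon_1} \cap T_\alpha^{-1}(C)$ with $C \in \mathcal{P}_n$: if $(j_1,\epsilon_1)$ is a \emph{full} branch then $T_\alpha(I_{j_1}^{\epsilon_1}) = I_\alpha$ and the image equals $T_\alpha^n(C) \in \mathcal{I}_n$, contributing nothing new. The only non-full branches are $(j_{min},+)$ and $(2,-)$. For $(j_{min},+)$, $T_\alpha$ maps onto the partial interval $J_\alpha = (\alpha^{(1)},\alpha)$, where $\alpha^{(1)} := T_\alpha(\alpha^-)$. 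The intersection $J_\alpha \cap C$ equals $C$ (contributing an element of $\mathcal{I}_n$) except when the cut point $\alpha^{(1)}$ lies in the interior of $C$; because $\mathcal{P}_n$ is a partition there is at most one such $C$, yielding at most one genuinely new image. A symmetric argument for $(2,-)$ supplies at most one more, giving $N_{n+1} \leq N_n + 2$. Combined with $N_1 \leq 3$, this produces $N_n \leq 2n+1$.

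Item (4) is the shortest: on each cylinder of $\mathcal{P}_1$ one has $g_{1,\alpha}(x) = 1/|T_\alpha'(x)| = x^2$, and these values agree at shared endpoints, so $g_{1,\alpha}$ coincides with $x \mapsto x^2$ on $I_\alpha$, whose total variation on $[\alpha-1,\alpha]$ equals $(\alpha-1)^2 + \alpha^2 \leq 2$, so any $C_0 \geq 2$ works. I expect (3) to be the main bookkeeping challenge, since one must verify that the truncation produces at most one new image per partial branch at each step; items (1) and (4) are one-line computations, and (2) is a standard distortion estimate.
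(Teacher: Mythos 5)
Items (1)--(3) of your proposal are correct and essentially the paper's own argument: (1) is the same chain-rule observation; (3) is the same induction on the two non-full boundary branches, with the same decomposition of an $(n{+}1)$-cylinder into a first branch intersected with $T_\alpha^{-1}(C)$, $C\in\mathcal{P}_n$, and the same ``at most one partially covered $C$ per boundary branch, hence $N_{n+1}\le N_n+2$, $N_1\le 3$'' count; for (2) your closed-form cocycle sum $\sum_{k=0}^{n-1} T_\alpha''(x_k)/\bigl(T_\alpha'(x_k)^2\,(T_\alpha^{n-k-1})'(x_{k+1})\bigr)$ is just the unrolled version of the paper's inductive estimate $K_{n+1}\le K_n+2\gamma_\alpha^n$, and both yield the geometric bound $2/(1-\gamma_\alpha)$ (yours even with the harmless extra factor $\sqrt{\gamma_\alpha}$).

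The gap is in item (4). By definition $g_{1,\alpha}=\sum_{j\in\mathcal{P}_1}\frac{1}{|T_\alpha'|}\chi_{I_j}$, where the cylinders $I_j$ are \emph{open} intervals, so $g_{1,\alpha}$ equals $x^2$ only in the interior of each cylinder and vanishes at the countably many cylinder endpoints $\pm\frac{1}{j+\alpha}$, which accumulate at $0$. It does not coincide with $x\mapsto x^2$ on $I_\alpha$; the paper insists on exactly this point (``$g_{n,\alpha}$ \dots is zero on the boundary of any interval of the partition'') and later uses $g_{n,\alpha}\mid_{\partial I_j}=0$ in the Lasota--Yorke estimates, so one cannot quietly pass to the continuous (minimal-variation) representative. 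Consequently $\var_{I_\alpha} g_{1,\alpha}$ is not $(\alpha-1)^2+\alpha^2$: each interior endpoint $x_j=\pm\frac{1}{j+\alpha}$ contributes a down-and-up jump of total size about $2/(j+\alpha)^2$, and the actual content of the paper's proof of (4) is that these jumps are summable, the series $\sum_k 1/k^2$ giving a bound uniform in $\alpha$. So your constant $2$ is not a valid bound for the function the paper actually works with; the statement survives, but only after adding the jump sum, which is the one idea your argument omits.
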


\subsection{Spectral decomposition } \label{spectral}

The \emph{transfer operator} (also known as \emph{Ruelle-Perron-Frobenius operator}) $\Phi_\alpha : L^1(I_\alpha) \rightarrow L^1(I_\alpha)$ is 
defined via the duality
$$\int_{I_\alpha} \Phi_\alpha(f) g dx = \int_{I_\alpha} f (g\circ T_\alpha) dx \quad \forall f\in L^\infty(I_\alpha)$$ 
Let us recall that the $n^{th}$ iterate of the transfer operator is given by
\begin{equation} \label{transfer}
\Phi_{\alpha}^n(f) = \sum_{j \in \mathcal{P}_n} \frac{f \circ  \sigma_j}{|(T^n_{\alpha})' \circ \sigma_j|}\chi_{T_{\alpha}^n(I_j)}
\end{equation}
where $\sigma_j : T^n_\alpha(I_j) \rightarrow I_j$ is the inverse of the restriction $T^n_\alpha\mid_{I_j} : I_j \rightarrow T^n_\alpha(I_j)$.

Even though $\Phi_\alpha$ is so far defined on $L^1$, it turns out that the transfer operator 
preserves the subspace $BV(I_\alpha)$, and indeed it has good convergence properties in $BV$-norm. 
More precisely, we can now prove the

\begin{theorem} \label{spectraldec}
Let $\Phi_{\alpha} : L^1(I_{\alpha}) \rightarrow L^1(I_{\alpha})$ be the transfer operator for the system $T_{\alpha}$, with 
$\alpha \in (0,1)$. Then one can write
$$\Phi_{\alpha} = \Pi_\alpha + \Psi_\alpha $$
where $\Pi_\alpha$ and $\Psi_\alpha$ are commuting, linear bounded operators on $BV(I_\alpha)$. Moreover, 
$\Psi_\alpha$ is a linear bounded operator on $BV(I_{\alpha})$ of spectral radius strictly less than $1$,
and $\Pi_\alpha$ is a projector onto the one-dimensional eigenspace relative to the eigenvalue $1$. It is given by
$$\Pi_\alpha(f) = \lim_{n \rightarrow \infty} \frac{1}{n} \sum_{k = 1}^{n} \Phi_{\alpha}^k(f)$$
where the convergence is in $L^1$.
\end{theorem}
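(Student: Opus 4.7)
The plan is to apply the classical Ionescu-Tulcea--Marinescu scheme: derive a Lasota-Yorke inequality on $BV(I_\alpha)$ to get quasi-compactness, then analyze the peripheral spectrum using the known ergodicity of $\mu_\alpha$.

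First I would establish a Lasota-Yorke inequality of the form
\[
\var_{I_\alpha} \Phi_\alpha^n f \;\le\; A\,\lambda^n \var_{I_\alpha} f \;+\; B\,\|f\|_{L^1}
\]
for some $\lambda < 1$. Starting from the explicit formula \eqref{transfer}, one computes $\var \Phi_\alpha^n f$ branch by branch, splitting the contribution of $f \circ \sigma_j$ from that of $g_{n,\alpha}$ and the indicators $\chi_{T_\alpha^n(I_j)}$. The first piece is bounded by $2\|g_{n,\alpha}\|_\infty \var f$, which by Proposition \ref{classeC}(1) is at most $2\gamma_\alpha^n \var f$. The second piece, coming from the variation of $g_{n,\alpha}$, is controlled via parts (2) and (4) of Proposition \ref{classeC}, and the boundary terms (arising from cutting off at endpoints of $T_\alpha^n(I_j)$) are controlled because Proposition \ref{classeC}(3) ensures that only $O(n)$ distinct images appear, so these contributions can be absorbed into a term proportional to $\|f\|_\infty$, and then into $\|f\|_{L^1}$ via the usual $\|f\|_\infty \le (\var f + \|f\|_{L^1})/|I_\alpha|$ inequality. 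Since $\gamma_\alpha<1$, picking $n$ large and iterating yields the desired inequality with any $\lambda\in(\gamma_\alpha,1)$.

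Combined with the fact that $\Phi_\alpha$ is a weak contraction on $L^1$, the Lasota-Yorke inequality and the compactness of the inclusion $BV \hookrightarrow L^1$ (Helly's theorem) imply, through the Ionescu-Tulcea--Marinescu / Hennion argument, that $\Phi_\alpha$ is quasi-compact on $BV(I_\alpha)$ with essential spectral radius $\le \lambda < 1$. Thus the part of the spectrum in $\{|z| > \lambda\}$ consists of finitely many eigenvalues of finite multiplicity. The existence of an absolutely continuous invariant probability measure $\mu_\alpha = \rho_\alpha\,dx$ from \cite{LuzziMarmi}, combined with the fact that any $L^1$ fixed point of $\Phi_\alpha$ satisfying the Lasota-Yorke bound must actually lie in $BV$, shows that $1$ is an eigenvalue.

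It remains to show that $1$ is a simple eigenvalue and is the unique eigenvalue on the unit circle. Simplicity follows from the ergodicity of $\mu_\alpha$ proven in \cite{LuzziMarmi}: a second linearly independent fixed density would produce a second ergodic a.c.\ invariant measure. To exclude other peripheral eigenvalues, one argues in the standard way: a peripheral eigenvalue $e^{i\theta}\ne 1$ of $BV$-eigenfunction $h$ would, via $|h|$, produce either another invariant density (contradicting uniqueness) or a non-trivial cyclic decomposition of $I_\alpha$ into $T_\alpha$-permuted measurable sets, which is ruled out by the fact that iterates of full cylinders shrink to zero diameter and hence $T_\alpha$ is (exact, in particular) mixing on $\mathrm{supp}\,\mu_\alpha$. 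Setting $\Pi_\alpha$ to be the spectral projector onto $\ker(\Phi_\alpha - I)$ and $\Psi_\alpha := \Phi_\alpha(I - \Pi_\alpha)$, the operators commute by construction, $\Psi_\alpha$ has spectral radius strictly less than one, and decomposing $f = \Pi_\alpha f + (I-\Pi_\alpha) f$ yields
\[
\frac{1}{n}\sum_{k=1}^n \Phi_\alpha^k f \;=\; \Pi_\alpha f \;+\; \frac{1}{n}\sum_{k=1}^n \Psi_\alpha^k (I-\Pi_\alpha) f \;\xrightarrow[n\to\infty]{L^1}\; \Pi_\alpha f,
\]
the convergence being ensured by $\|\Psi_\alpha^k\|_{BV} \to 0$ and the continuity of the inclusion $BV \hookrightarrow L^1$.

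The main obstacle I expect is the peripheral spectrum step: the Lasota-Yorke estimate delivers quasi-compactness cleanly, but ruling out eigenvalues of modulus $1$ other than $1$ itself requires a genuinely dynamical input (ergodicity/aperiodicity) rather than a purely functional-analytic one. The bookkeeping of boundary terms in the variation estimate is also delicate, which is presumably where the proof of \cite{LuzziMarmi} referred to in the introduction contained a gap; one must carefully exploit Proposition \ref{classeC}(3) to keep the number of boundary contributions polynomial in $n$ so they are dominated by the contracting factor $\gamma_\alpha^n$.
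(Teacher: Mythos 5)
Your overall architecture is the same as the paper's: verify the hypotheses collected in Proposition \ref{classeC}, obtain quasi-compactness of $\Phi_\alpha$ on $BV(I_\alpha)$ via a Lasota--Yorke estimate together with compactness of $BV\hookrightarrow L^1$ and the Ionescu-Tulcea--Marinescu/Hennion mechanism (the paper does this by citing \cite{Broise}, Prop.~4.1, whose hypotheses are exactly the items of Proposition \ref{classeC}), and then use exactness of $T_\alpha$ from \cite{LuzziMarmi} to conclude that $1$ is the only peripheral eigenvalue and that it is simple; the projector and the Ces\`aro formula follow as you describe. The genuine problem lies in your sketch of the Lasota--Yorke inequality, at precisely the point the paper flags as delicate. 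Cutting off at the endpoints of $T_\alpha^n(I_j)$ produces two boundary terms of size $\sup_{I_j}|f\,g_{n,\alpha}|$ for \emph{every} cylinder $I_j\in\mathcal{P}_n$, and there are countably many cylinders; Proposition \ref{classeC}(3) bounds the number of distinct \emph{images} $T_\alpha^n(I_j)$, not the number of cylinders, so it does not reduce these contributions to $O(n)$ terms. Summing them is not controlled by a constant times $\Vert f\Vert_\infty$: the natural bounds give either
$$\sum_{j}\sup_{I_j}|f|\,\sup_{I_j}g_{n,\alpha}\ \leq\ \Vert f\Vert_\infty \var_{I_\alpha} g_{n,\alpha},$$
where $\var_{I_\alpha}g_{n,\alpha}$ grows exponentially in $n$ (of order $C_0^n$) rather than decaying, or, after writing $\sup_{I_j}|f|\leq \var_{I_j}f+\frac{1}{m(I_j)}\int_{I_j}|f|$, a term $\gamma_\alpha^n\sum_j \frac{1}{m(I_j)}\int_{I_j}|f|$ in which $1/m(I_j)$ is unbounded over the infinitely many cylinders. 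Either way, the step ``absorb the boundary terms into $\Vert f\Vert_\infty$ and hence into $\Vert f\Vert_{L^1}$'' fails as written: the coefficient multiplying $\var f$ after this absorption is not small. This is essentially the same trap as the error in \cite{LuzziMarmi} described in the paper's footnote.

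The way around it (for fixed $\alpha$, which is all this theorem needs) is either to quote \cite{Broise}, Prop.~4.1 directly, as the paper does, or to argue as in Section \ref{LYsect}: since $g_{n,\alpha}$ vanishes at the endpoints of every cylinder, one has $\var\Phi_\alpha^n f\leq \var(f\,g_{n,\alpha})$ with no boundary terms at all, and the remaining difficulty---that $\var g_{n,\alpha}$ is exponentially large---is handled by the decomposition $g_{n,\alpha}=h_{n,\alpha}+l_{n,\alpha}$ of Lemma \ref{technical} and the finite partition of Lemma \ref{goodpartition}, on whose elements $\var_{L_{i,\alpha}}g_{n,\alpha}\leq 2(n+1)\Vert g_{n,\alpha}\Vert_\infty+2\epsilon$ while $m(L_{i,\alpha})$ stays bounded below. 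With the Lasota--Yorke inequality in hand, your remaining steps (Hennion quasi-compactness, simplicity via ergodicity, exclusion of other peripheral eigenvalues via exactness/mixing, definition of $\Pi_\alpha$ and $\Psi_\alpha$, and the $L^1$ Ces\`aro limit) are correct and coincide with the paper's argument.
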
 

\begin{corollary}
For every $\alpha \in (0,1)$, $T_{\alpha}$ has exactly one invariant probability measure $\mu_\alpha$ which is absolutely continuous w.r.t. 
Lebesgue measure.
Its density will be denoted by $\rho_\alpha$.
\end{corollary}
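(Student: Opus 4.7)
The plan is to extract both existence and uniqueness from Theorem \ref{spectraldec}, which identifies the action of $\Phi_\alpha$ on $BV(I_\alpha)$ as a rank-one projection plus a strict contraction.

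For existence, I take the normalized constant density $f_0 := |I_\alpha|^{-1} \chi_{I_\alpha} \in BV(I_\alpha)$ and set $\rho_\alpha := \Pi_\alpha(f_0)$. Since $\Pi_\alpha$ equals the $L^1$-limit of the Cesaro averages $\frac{1}{n}\sum_{k=1}^{n} \Phi_\alpha^k(f_0)$, and each $\Phi_\alpha^k$ is positivity-preserving and preserves the integral (the latter following from the defining duality $\int \Phi_\alpha(f)\,dx = \int f\,dx$ obtained by testing with $g \equiv 1$), the limit $\rho_\alpha$ is non-negative with $\int_{I_\alpha} \rho_\alpha \, dx = 1$. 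Moreover, since $\Pi_\alpha$ projects onto the one-dimensional eigenspace of $\Phi_\alpha$ at eigenvalue $1$, we have $\Phi_\alpha \rho_\alpha = \rho_\alpha$, and therefore $\mu_\alpha := \rho_\alpha \, dx$ is an absolutely continuous $T_\alpha$-invariant probability measure.

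For uniqueness, let $\nu$ be another absolutely continuous $T_\alpha$-invariant probability with density $g \in L^1(I_\alpha)$. I first show that $\mu_\alpha$ is ergodic: if some $T_\alpha$-invariant Borel set $A$ satisfied $0 < \mu_\alpha(A) < 1$, then a direct check using $T_\alpha^{-1}(A) = A$ shows that both $\rho_\alpha \chi_A$ and $\rho_\alpha \chi_{A^c}$ are non-zero $L^1$-fixed points of $\Phi_\alpha$; being non-negative and supported on disjoint sets, they are linearly independent, contradicting the one-dimensionality of the fixed subspace. Ergodicity of $\mu_\alpha$ then forces the Radon-Nikodym derivative $d\nu/d\mu_\alpha$, a $T_\alpha$-invariant element of $L^1(\mu_\alpha)$, to be $\mu_\alpha$-a.e.\ constant, and the normalization $\nu(I_\alpha)=1$ gives $\nu = \mu_\alpha$.

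The main subtlety is the gap between $BV$ and $L^1$: Theorem \ref{spectraldec} controls $\Phi_\alpha$ only on $BV(I_\alpha)$, whereas uniqueness must hold for arbitrary $L^1$ densities, and in particular one needs to know that the fixed-point subspace of $\Phi_\alpha$ \emph{acting on $L^1$} remains one-dimensional. This is standard for piecewise expanding maps: any $L^1$-fixed point of the Perron-Frobenius operator inherits $BV$-regularity from a single application of the Lasota-Yorke inequality of Section \ref{LY}, so the spectral picture on $BV$ already captures all absolutely continuous invariant measures.
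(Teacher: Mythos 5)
Your argument is correct in substance, but it follows a genuinely different route from the paper's. The proof printed in the paper (which serves both Theorem \ref{spectraldec} and the corollary) does not argue intrinsically from the spectral decomposition: it first obtains the Ionescu-Tulcea--Marinescu decomposition with finitely many peripheral eigenvalues (via Proposition \ref{classeC} and Broise's Proposition 4.1), and then imports the exactness of $T_\alpha$ from Luzzi--Marmi to conclude ergodicity and mixing, which is what forces the peripheral spectrum to consist of the single simple eigenvalue $1$; existence and uniqueness of the absolutely continuous invariant measure are folded into that argument. You instead take Theorem \ref{spectraldec} at face value and deduce the corollary from it alone: existence via positivity and integral preservation of the Cesaro averages defining $\Pi_\alpha$, uniqueness via one-dimensionality of the eigenspace at $1$ together with the standard fact that an $L^1$ fixed density of a piecewise expanding map has bounded variation. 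Since the corollary is stated after the theorem, this is legitimate and more self-contained (no appeal to exactness); the paper's route costs an external ergodic-theoretic input but buys more, namely mixing and hence the decay of correlations used immediately afterwards.

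Two caveats on your uniqueness argument. First, ``a single application of the Lasota-Yorke inequality'' is an overstatement: for a density $g$ that is merely $L^1$ one cannot apply Proposition \ref{LY} directly, since $\var g$ may be infinite; the standard bootstrap applies the inequality to $BV$ approximants (or to their Cesaro averages) and then uses lower semicontinuity of the variation under $L^1$ convergence. Alternatively, one can avoid regularity altogether: by the $L^1$-contraction property of $\Phi_\alpha$, the Cesaro averages $\frac{1}{n}\sum_{k=1}^{n}\Phi_\alpha^k$ extend continuously from $BV$ to $L^1$, so any fixed density lies in the $L^1$-closure of the image of $\Pi_\alpha$, which is the one-dimensional span of $\rho_\alpha$. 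Second, your ergodicity-plus-Radon-Nikodym step tacitly assumes $\nu \ll \mu_\alpha$, which does not follow from $\nu \ll m$ alone unless one already knows $\rho_\alpha > 0$ a.e.\ (true for these maps, but not available at this point of the paper). The detour is unnecessary: once the fixed-point subspace of $\Phi_\alpha$ in $L^1$ is known to be one-dimensional, the density of any absolutely continuous invariant probability is a fixed point, hence a scalar multiple of $\rho_\alpha$, and normalization gives $\nu = \mu_\alpha$ directly.
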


\begin{proof}
Let us fix $\alpha \in (0,1)$. By proposition \ref{classeC}, we can apply (\cite{Broise}, prop. 4.1), which 
yields via Ionescu-Tulcea and Marinescu's theorem \cite{ITM} the following spectral decomposition
$$\Phi_{\alpha} = \sum_{i = 0}^p \lambda_i \Phi_i + \Psi_\alpha $$
where $|\lambda_i| = 1$, and the $\Phi_i$ are linear bounded operators on $BV(I_\alpha)$ with finite-dimensional image, and $\rho(\Psi_\alpha) < 1$.
Now, it is known (\cite{LuzziMarmi}, lemma 1) that $T_{\alpha}$ is \emph{exact} in Rohlin's sense (see \cite{Rohlin}); this implies that 
the invariant measure we have found is ergodic and mixing, which in turn implies that the only eigenvalue of $\Phi_{\alpha}$ of modulus $1$ 
is $1$ itself and that its associated eigenspace is one-dimensional (see \cite{Viana}, chap. 3).
\end{proof}

 

The spectral decomposition also immediately implies the following exponential decay of correlations: 

\begin{proposition}
For any $\alpha \in (0,1)$ there exist $C, \lambda$, $0 < \lambda < 1$ such that for every $n \in \mathbb{N}$ and for every $f_1, f_2 \in BV(I_\alpha)$ 
$$\left| \int_{I_\alpha} f_1(x) f_2(T^n_\alpha(x)) d\mu_\alpha -   \int_{I_\alpha} f_1(x) d\mu_\alpha \int_{I_\alpha} f_2(x) d\mu_\alpha \right| \leq C\lambda^n \Vert f_1 \Vert_{BV} \Vert f_2 \Vert_{L^1}$$ 
\end{proposition}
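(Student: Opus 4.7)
The plan is to use duality to rewrite the correlation integral in terms of iterates of the transfer operator acting on a $BV$ function, and then exploit the spectral decomposition from Theorem \ref{spectraldec} to separate the projector part (which gives the product of averages) from a geometrically decaying remainder.

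More concretely, I would first rewrite
\begin{equation*}
\int_{I_\alpha} f_1 (f_2 \circ T_\alpha^n) \, d\mu_\alpha = \int_{I_\alpha} (f_1 \rho_\alpha) (f_2 \circ T_\alpha^n) \, dx = \int_{I_\alpha} \Phi_\alpha^n(f_1 \rho_\alpha) f_2 \, dx,
\end{equation*}
using the defining duality of the transfer operator. Then I would apply Theorem \ref{spectraldec}: since $\Pi_\alpha$ and $\Psi_\alpha$ commute and $\Pi_\alpha$ is a projector onto the eigenspace of $1$ (so that $\Phi_\alpha \Pi_\alpha = \Pi_\alpha$ forces $\Psi_\alpha \Pi_\alpha = 0$), one has $\Phi_\alpha^n = \Pi_\alpha + \Psi_\alpha^n$.

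Next I would identify $\Pi_\alpha(f_1 \rho_\alpha)$ explicitly. Since $\Pi_\alpha$ has one-dimensional image $\mathbb{C}\rho_\alpha$, we can write $\Pi_\alpha(g) = \ell(g) \rho_\alpha$ for some linear functional $\ell$; integrating both sides against Lebesgue measure and using that $\Phi_\alpha$ (hence $\Pi_\alpha$, by the Ces\`aro formula in Theorem \ref{spectraldec}) preserves the integral, we get $\ell(g) = \int_{I_\alpha} g\, dx$. Therefore $\Pi_\alpha(f_1 \rho_\alpha) = \bigl(\int_{I_\alpha} f_1 \, d\mu_\alpha\bigr) \rho_\alpha$, and
\begin{equation*}
\int_{I_\alpha} \Pi_\alpha(f_1\rho_\alpha) f_2 \, dx = \int_{I_\alpha} f_1 \, d\mu_\alpha \cdot \int_{I_\alpha} f_2 \, d\mu_\alpha.
\end{equation*}

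It remains to estimate the error term $\int_{I_\alpha} \Psi_\alpha^n(f_1 \rho_\alpha) f_2 \, dx$. Here I would pick any $\lambda$ strictly between $\rho(\Psi_\alpha)$ and $1$, so that $\|\Psi_\alpha^n\|_{BV \to BV} \leq C_1 \lambda^n$, and bound
\begin{equation*}
\left| \int_{I_\alpha} \Psi_\alpha^n(f_1 \rho_\alpha) f_2 \, dx \right| \leq \|\Psi_\alpha^n(f_1 \rho_\alpha)\|_\infty \|f_2\|_{L^1} \leq \|\Psi_\alpha^n(f_1 \rho_\alpha)\|_{BV} \|f_2\|_{L^1},
\end{equation*}
using $\|\cdot\|_\infty \leq \|\cdot\|_{BV}$ on a bounded interval. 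The only remaining ingredient is that $BV(I_\alpha)$ is a Banach algebra, so $\|f_1 \rho_\alpha\|_{BV} \leq C_2 \|f_1\|_{BV} \|\rho_\alpha\|_{BV}$; this is a standard fact cited in the appendix. Folding $C_1 C_2 \|\rho_\alpha\|_{BV}$ into a single constant $C$ finishes the estimate.

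I do not anticipate a real obstacle: all three ingredients (duality, the commuting decomposition with $\Psi_\alpha \Pi_\alpha = 0$, and the Banach algebra property of $BV$) are either already in hand from Theorem \ref{spectraldec} or are standard. The only mildly subtle point is identifying the constant in $\Pi_\alpha(f_1 \rho_\alpha)$ correctly as the $\mu_\alpha$-average of $f_1$, which comes out cleanly once one tests against the constant function.
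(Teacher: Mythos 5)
Your proposal is correct and follows essentially the same route as the paper, whose proof is just a one-line specification of the constants $\lambda \in (\rho(\Psi_\alpha),1)$ and $C = 2\Vert \rho_\alpha \Vert_{BV} \sup_n \Vert \Psi_\alpha^n\Vert_{BV}/\lambda^n$: duality plus the decomposition $\Phi_\alpha^n = \Pi_\alpha + \Psi_\alpha^n$, with $\Pi_\alpha(f_1\rho_\alpha) = \bigl(\int f_1\, d\mu_\alpha\bigr)\rho_\alpha$ giving the product of averages and the remainder bounded via $\Vert \cdot \Vert_\infty \leq \Vert \cdot \Vert_{BV}$ and the $BV$ product estimate (lemma \ref{BVprop}). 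Your write-up simply makes explicit the steps the paper leaves implicit.
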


\begin{proof}
One can take any $\lambda$ s.t. $\rho(\Psi_\alpha) < \lambda <1$ and $C = 2 \Vert \rho_\alpha \Vert_{BV} \sup_{n \in \mathbb{N}} \frac{\Vert \Psi_\alpha^n \Vert_{BV}}{\lambda^n}$.  
\end{proof}

\section{Continuity of entropy} \label{LasotaYorke}

The goal of this section is to prove theorem \ref{Holdermain}, namely the H\"older-continuity of the function 
$\alpha \mapsto h(T_\alpha)$.

The first step is to prove an estimate of the essential spectral radius of the transfer operator 
acting on the space of BV functions (Lasota-Yorke inequality). 
If one can prove a bound which is uniform in $\alpha$, then the invariant densities $\rho_\alpha$
turn out to be continuous in the $L^1$-topology and their $BV$-norms are bounded. 
This method has been undertaken in \cite{LuzziMarmi}, but unfortunately their estimates prove to be too 
optimistic\footnote{The mistake in \cite{LuzziMarmi} consists in using, in eq. (12), the estimate (1) of lemma \ref{BVprop} of 
the present paper on the sets $\tilde{I}_{\xi}^{(n)}$, which are not intervals if $n >1$.}: 
the bulk of section \ref{LYsect} (prop. \ref{LY}) is another proof of this uniform Lasota-Yorke inequality.

The second step (section \ref{Kellerdistsect}) is to estimate the modulus of continuity of $h(T_\alpha)$: 
we will prove H\"older-continuous dependence of the invariant densities $\rho_\alpha$ in the $L^1$-topology, 
by using a stability result for the spectral projectors \cite{KellerLiverani}.
The theorem then follows from Rohlin's formula.


\subsection{Spectral radius estimate} \label{LYsect}

We are going to give a proof of the following uniform Lasota-Yorke inequality (in order to simplify notation, 
from now on $\var_{I_\alpha} f$ will just be denoted $\var f$):

\begin{proposition} \label{LY}
Let $\underline{\alpha} \in (0,1)$. Then there exist a neighbourhood $U$ of $\underline{\alpha}$ 
and constants $0 < \lambda < 1, C > 0, D > 0$ such that for every $\alpha \in U$
$$\var \Phi^n_\alpha(f) \leq C \lambda^n \var f + D \Vert f \Vert_{L^1} \qquad \qquad \forall n \geq 1, 
\ \forall f \in BV(I_\alpha)$$
\end{proposition}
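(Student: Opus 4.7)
The plan is to prove a single Lasota--Yorke estimate for a sufficiently high iterate $N = N(\underline\alpha)$, namely
\[
\var \Phi_\alpha^N(f) \leq \lambda \var f + D \|f\|_{L^1}
\]
with $\lambda < 1$, uniformly for $\alpha$ in a neighbourhood $U$ of $\underline\alpha$, and then to obtain the statement for all $n$ by the standard iteration, using the $L^1$-contraction $\|\Phi_\alpha g\|_{L^1} \leq \|g\|_{L^1}$.

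The per-cylinder estimate will be carried out by the classical route, but working on each cylinder $I_j \in \mathcal{P}_N$ individually. This is precisely the point where \cite{LuzziMarmi} went wrong: they applied the Helly-type inequality $\sup_I|f| \leq \var_I f + |I|^{-1} \int_I |f|$ on preimage sets $\tilde I_\xi^{(n)}$, which are in fact unions of intervals rather than single intervals, invalidating the estimate. The fix is to start from formula (\ref{transfer}) directly: the $j$-th summand is supported on the \emph{single} interval $T_\alpha^N(I_j)$, and its variation splits into an interior part (handled by the product rule on $(f \circ \sigma_j) \cdot (g_{N,\alpha} \circ \sigma_j)$) plus a boundary jump of size at most $2\sup_{I_j}|f \cdot g_{N,\alpha}|$. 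Inserting the bounds of prop.\ \ref{classeC}---chiefly $\|g_{N,\alpha}\|_\infty \leq \gamma_\alpha^N$ and $\var_{I_j} g_{N,\alpha} \leq 2|I_j|/(1-\gamma_\alpha)$, with $|I_j| \leq |I_\alpha|\cdot \sup g_{N,\alpha} \leq \gamma_\alpha^N$ by bounded distortion---together with Helly now correctly applied on the interval $I_j$ in the form $|I_j|\sup_{I_j}|f| \leq |I_j|\var_{I_j} f + \int_{I_j}|f|$, yields a per-cylinder interior contribution dominated by a constant multiple of $\gamma_\alpha^N \var_{I_j} f + \int_{I_j}|f|$.

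Summing these pieces over $\mathcal{P}_N$ produces a contribution $O(\gamma_\alpha^N) \var f + O(1) \|f\|_{L^1}$, which is of the desired shape. The boundary-jump terms need more care and are the main technical difficulty: they appear only at non-full cylinders, whose images take at most $2N$ distinct values $J_k \neq I_\alpha$ by prop.\ \ref{classeC}(3). For each $J_k$, bounded distortion lets one rewrite $\sup_{I_j}|f \cdot g_{N,\alpha}| \asymp (|I_j|/|J_k|)\sup_{I_j}|f|$, and then Helly on $I_j$ followed by summation over the (possibly infinitely many) cylinders with $T_\alpha^N(I_j) = J_k$ yields a bound of the form $|J_k|^{-1} \bigl(\gamma_\alpha^N \var f + \|f\|_{L^1}\bigr)$. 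Adding over the at most $2N$ values of $k$ requires $\min_k |J_k|$ to be bounded below, uniformly in $\alpha \in U$.

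The main obstacle is this uniformity in $\alpha$: the partition $\mathcal{P}_1$ depends discontinuously on $\alpha$ (new branches appear as $\alpha$ crosses points $1/(k+1)$), and the forward orbits of $\alpha-1$ and $\alpha$, which determine the set of non-full images $J_k$, vary with $\alpha$ in an erratic fashion. However, since the fundamental estimates of prop.\ \ref{classeC} are manifestly uniform (with $\gamma_\alpha$ continuous in $\alpha$ and $C_0$ independent of $\alpha$), and since for $\alpha$ in a sufficiently small neighbourhood $U$ the combinatorics of the first $N$ iterates of the boundary orbit stabilize so that the $J_k$ vary continuously, one can fix $N$ large enough to make $\lambda < 1$ at $\underline\alpha$ and preserve both $\lambda < 1$ and the boundedness of $D$ throughout $U$. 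The final form $\var \Phi_\alpha^n(f) \leq C\lambda^n \var f + D \|f\|_{L^1}$ for all $n$ then follows by splitting $n = kN + r$ and iterating the $N$-step bound.
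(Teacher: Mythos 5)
Your decomposition of each summand of \eqref{transfer} into an interior part plus boundary jumps, and your treatment of the interior part, are fine; the gap is in the boundary terms, and it is exactly the hard point of the proposition. Your bound for the jumps carries a factor $|J_k|^{-1}$, where $J_k$ ranges over the (at most $2N$) non-full images $T_\alpha^N(I_j)$, and you dispose of it by asserting that ``for $\alpha$ in a sufficiently small neighbourhood $U$ the combinatorics of the first $N$ iterates of the boundary orbit stabilize so that the $J_k$ vary continuously'', whence $\min_k|J_k|$ is bounded below on $U$. This is false precisely at the parameters for which the uniform statement is most needed. If the orbit of $\underline{\alpha}$ or $\underline{\alpha}-1$ under $T_{\underline{\alpha}}$ meets a cylinder endpoint (or the points $\underline{\alpha}$, $\underline{\alpha}-1$ themselves) within $N$ steps, then for $\alpha$ arbitrarily close to $\underline{\alpha}$ new branches and new non-full images appear whose lengths tend to $0$ as $\alpha\to\underline{\alpha}$. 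A concrete example is $\underline{\alpha}=\frac{\sqrt5-1}{2}$: for $\alpha>\underline{\alpha}$ the cylinder $(\frac{1}{1+\alpha},\alpha)$ has image $(T_\alpha(\alpha),\alpha)$, whose length $\alpha-\frac{1}{\alpha}+1\to 0$. Since infinitely many cylinders can share such a tiny image, you cannot avoid applying the Helly-type inequality at the scale $|I_j|$ and hence picking up $1/|J_k|$ (by bounded distortion $\sup_{I_j}g_{N,\alpha}/|I_j|\asymp 1/|J_k|$); consequently your constant $D$ (and, in the form you state the per-image bound, even the coefficient of $\var f$) blows up as $\alpha$ approaches such an $\underline{\alpha}$, and no choice of $N$ repairs this, because the degeneration happens at every fixed $N$. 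The proposition is claimed for \emph{every} $\underline{\alpha}\in(0,1)$, so restricting to parameters where the boundary combinatorics are locally constant does not suffice. This is the same trap that invalidated the argument of \cite{LuzziMarmi}: the whole difficulty is the uniform control of the boundary contributions under topological bifurcation, and your proposal assumes it away.

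The paper's proof circumvents the image intervals entirely: it splits $g_{n,\alpha}=h_{n,\alpha}+l_{n,\alpha}$ with $\var l_{n,\alpha}$ arbitrarily small and $h_{n,\alpha}$ having finitely many jumps whose locations and multiplicities are stable under perturbation of $\alpha$ (lemma \ref{technical}: nearby parameters have at most $n+1$ jumps within distance $|\alpha-\beta|$ of each old jump), and then sums the variation over a \emph{fixed} finite partition $L_1,\dots,L_r$ whose elements have length bounded below uniformly in $\beta$ (lemma \ref{goodpartition}). Each $L_i$ then contains at most $n+1$ jumps, each contributing $2\Vert g_{n,\beta}\Vert_\infty\le 2\gamma_\beta^n$, so the coefficient of $\var f$ is $(2n+3)\gamma_\beta^n+2\epsilon$ with no $1/|J_k|$ anywhere, and the only inverse-length factor, $1/C$ with $C\le m(L_{i,\beta})$, multiplies the harmless $\Vert f\Vert_{L^1}$ term and is uniform by construction. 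If you want to salvage your route, the missing idea is precisely this: apply the Helly inequality to $f$ on intervals of a fixed, parameter-independent mesh containing the (counted) jump points, rather than on the cylinders $I_j$ or their images $J_k$.
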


Although several inequalities of this type are present in the literature, (i.e. in \cite{Rychlik}), 
these are generally given for individual maps. However, for the goal of this paper it is absolutely essential that 
coefficients $\lambda, C, D$ can be chosen uniformly in $\alpha$, hence one needs to take this dependence into account.
As $\alpha$ moves, even just in a neighbourhood of some fixed $\underline{\alpha}$,
 topological bifurcations are present (for instance if $\underline{\alpha}$ is a fixed point of some branch of $T_{\underline{\alpha}}$) 
hence in the formula \eqref{transfer} new boundary terms appear, requiring a very careful control. 

\begin{lemma} For each $\alpha \in (0,1)$, for each $f \in BV(I_\alpha)$
$$\var \Phi^n_{\alpha}(f) \leq  \var(f \cdot g_{n, \alpha})$$
\end{lemma}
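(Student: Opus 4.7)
The idea is to realise $\Phi^n_\alpha(f)$, piece by piece on $I_\alpha$, as a finite sum of homeomorphic pull-backs of $f\cdot g_{n,\alpha}$ under the inverse branches $\sigma_j$, and then to combine invariance of total variation under homeomorphism with its subadditivity. The whole scheme rests on Proposition~\ref{classeC}(3): branches of $T^n_\alpha$ appear and disappear only at finitely many points of $I_\alpha$, so no infinite collection of boundary terms can accumulate.

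Writing $J_j := T^n_\alpha(I_j)$, formula \eqref{transfer} rewrites as
$$\Phi^n_\alpha(f)(y) \;=\; \sum_{j\in\mathcal{P}_n} (f\cdot g_{n,\alpha})\bigl(\sigma_j(y)\bigr)\,\chi_{J_j}(y).$$
By Proposition~\ref{classeC}(3), the set $B$ of endpoints of the $J_j$'s is finite; I would let $L_1,\dots,L_N$ denote the open intervals into which $B$ partitions $I_\alpha$. On each $L_k$ only the finitely many branches $j$ with $J_j\supseteq L_k$ contribute; the restrictions $\sigma_j|_{L_k}$ are orientation-preserving-or-reversing homeomorphisms onto disjoint subintervals $\sigma_j(L_k)\subseteq I_\alpha$; and, as $k$ and $j$ vary, these subintervals refine the countable partition $\mathcal{P}_n$.

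On each $L_k$, subadditivity of total variation combined with its invariance under homeomorphism gives
$$\var_{L_k}\Phi^n_\alpha(f) \;\leq\; \sum_{j:\,J_j\supseteq L_k}\var_{L_k}\bigl((f\cdot g_{n,\alpha})\circ\sigma_j\bigr) \;=\; \sum_{j:\,J_j\supseteq L_k}\var_{\sigma_j(L_k)}(f\cdot g_{n,\alpha}).$$
Summing over $k$ and using pairwise disjointness of the $\sigma_j(L_k)$ inside $I_\alpha$ yields $\sum_k\var_{L_k}\Phi^n_\alpha(f)\leq\var_{I_\alpha}(f\cdot g_{n,\alpha})$.

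The delicate step, which I expect to be the main obstacle, is upgrading the left-hand side from the sum of open-interval variations to the full variation $\var_{I_\alpha}\Phi^n_\alpha(f)$, which also contains the jumps of $\Phi^n_\alpha(f)$ at points $y_i\in B$ produced by branches whose image $J_j$ terminates at $y_i$. Working with minimal-variation representatives, each such jump at $y_i$ decomposes as a signed sum of one-sided limits of $f\cdot g_{n,\alpha}$ at the cylinder endpoints $\sigma_j(y_i)\in\partial I_j$, and these one-sided limits are precisely the quantities already accounted for by the inter-cylinder contributions to $\var_{I_\alpha}(f\cdot g_{n,\alpha})$. A careful bookkeeping — matching each jump of $\Phi^n_\alpha(f)$ at a point of $B$ to the variation of $f\cdot g_{n,\alpha}$ across the adjacent cylinders in $\mathcal{P}_n$ — absorbs the missing terms into the right-hand side and closes the inequality $\var\Phi^n_\alpha(f)\leq\var(f\cdot g_{n,\alpha})$.
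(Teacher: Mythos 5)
Your plan has the right raw ingredients (branchwise change of variables under $\sigma_j$, sub- and superadditivity of total variation), but there are two problems, one factual and one structural. The factual one: on each $L_k$ it is not true that only finitely many branches contribute. Proposition \ref{classeC}(3) bounds the number of distinct \emph{images} $T^n_\alpha(I_j)$ (at most $2n+1$), not the number of cylinders sharing a given image: every full cylinder, and there are infinitely many, has $T^n_\alpha(I_j)=I_\alpha\supseteq L_k$. So the sum over $j$ on each $L_k$ is countable, your scheme does not actually rest on Proposition \ref{classeC}(3) (the paper's proof of this lemma never invokes it), and what you need at that point is countable subadditivity of $\var$ -- which does hold, so this error is repairable but signals that the finiteness you are leaning on is not there.

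The structural problem is the step you yourself call the main obstacle: it is asserted, not proved, and it is exactly where the content of the lemma lies in your setup. Bounding the variation of $\Phi^n_\alpha f$ across the points of $B$ by the part of $\var(f g_{n,\alpha})$ not already spent on the disjoint intervals $\sigma_j(L_k)$ requires more than the accounting you describe: besides branches whose image terminates at $y_i\in B$, every branch whose image crosses $y_i$ also contributes, namely the jump of $f g_{n,\alpha}$ at the \emph{interior} point $\sigma_j(y_i)$ of its cylinder (an arbitrary $BV$ function $f$ may jump there), and one must also check that the variations $\var_{\sigma_j(L_k)}$ already used do not double-count the cylinder-endpoint jumps you now want to reuse. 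None of this looks fatal, but without writing it out the inequality is not established. Note finally that the paper short-circuits the whole issue: since $g_{n,\alpha}$ is taken with the representative vanishing at cylinder endpoints, the boundary jumps of each zero-extended branch term $\frac{f}{|(T^n_\alpha)'|}\chi_{I_j}$ are already counted inside $\var(f g_{n,\alpha})$; so one applies countable subadditivity over the branches on all of $I_\alpha$ at once, transports each term by the homeomorphism $\sigma_j$ (a zero-extended function and its pullback have the same total variation, boundary jumps included), and recombines -- no partition by $B$ and no separate boundary bookkeeping is needed.
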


\begin{proof}

\begin{small}

$$\var \Phi^n_{\alpha}(f) = \var \left( \sum_{j \in \mathcal{P}_n} \frac{f \circ  \sigma_j}{|(T^n_{\alpha})' \circ \sigma_j|}\chi_{T_{\alpha}^n(I_j)} \right) \leq 
\sum_{j \in \mathcal{P}_n} \var \left( \frac{f \circ  \sigma_j}{|(T^n_{\alpha})' \circ \sigma_j|}\chi_{T_{\alpha}^n(I_j)} \right) =$$
$$ = \sum_{j \in \mathcal{P}_n} \var \left( \frac{f}{|(T_\alpha^n)'|}\chi_{I_j} \right) = 
\var \left( f \sum_{j \in \mathcal{P}_n} \frac{1}{|(T_\alpha^n)'|} \chi_{I_j} \right) =
\var (fg_{n, \alpha})$$
\end{small}
 
\end{proof}

Observe that $g_{n, \alpha}$ has infinitely many jumps discontinuities (indeed it is zero on the boundary of any interval 
of the partition $\mathcal{P}_n$), but all those jumps sum up to a finite total variation. We will, however, need to prove 
the stronger statement that the total variation of $g_{n, \alpha}$ decays exponentially fast in $n$, and uniformly in $\alpha$.
The idea of the proof is to control the total variation of $g_{n, \alpha}$ by writing it as a sum of two functions, 
$h_{n, \alpha}$ and $l_{n, \alpha}$ in such a way that the total variation of $l_{n, \alpha}$ is always very small, 
and $h_{n, \alpha}$ has always a finite, controlled number, of jump discontinuities. The following lemma is the key lemma:


\begin{lemma} \label{technical}

For each $\epsilon > 0$, for each $n \geq 1$, for each $\alpha \in (0,1)$ there exist two non-negative functions 
$h_{n, \alpha}$ and $l_{n, \alpha}$ such that
$$g_{n, \alpha} = h_{n,\alpha} + l_{n, \alpha}$$
and for each $\alpha$
\begin{enumerate}
\item $\var_{I_\alpha} l_{n, \alpha} \leq 3^n C_0^{n-1} \epsilon$, \ where $C_0$ is the constant in lemma \ref{classeC};
\item $h_{n, \alpha}$ is smooth with $|h_{n, \alpha}'| \leq 1$ outside a finite set $J_{n, \alpha}$, where $h_{n, \alpha}$ has jump 
discontinuities. Moreover, for each $\alpha$ there exists a 
neighbourhood $U = (\alpha-\eta, \alpha+\eta)$ of $\alpha$ and $r>0$ such that:
\begin{itemize}
 \item[a.] 
 For each $\beta \in U$, \quad $J_{n, \beta} \subseteq B(J_{n, \alpha}, r)$
\item[b.]
For each $x \in J_{n, \alpha}$,\quad $\#|J_{n, \beta} \cap B(x, r)| \leq n+1$
\item[c.] 
For each $y \in J_{n, \beta} \cap B(x, r)$, \quad  $|x-y| \leq |\alpha - \beta|$ 
\end{itemize}
\end{enumerate}

\end{lemma}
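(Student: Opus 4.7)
I would proceed by induction on $n$, using the chain-rule identity
\[
g_{n,\alpha}(x) \;=\; g_{1,\alpha}(x)\, g_{n-1,\alpha}(T_\alpha(x)),
\]
which follows from $(T_\alpha^n)'(x) = T_\alpha'(x)\cdot (T_\alpha^{n-1})'(T_\alpha(x))$. The crucial point is that $g_{1,\alpha}(x)=x^2$ is globally smooth on $I_\alpha$, so every new discontinuity of $g_{n,\alpha}$ must come from composing with $T_\alpha$; and the damping factor $g_{1,\alpha}(x)\asymp 1/j^2$ on the branch $I_j^{\pm}$ makes the resulting countable family of jumps summable, which is what lets the $l$-tail be squeezed below $\varepsilon$.

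\textbf{Base case and inductive step.} For $n=1$ the function $g_{1,\alpha}$ is already smooth, so I would simply take $h_{1,\alpha}:=g_{1,\alpha}$, $l_{1,\alpha}:=0$ and $J_{1,\alpha}:=\emptyset$; everything in (1)--(2) is immediate. Assuming the decomposition has been built at level $n-1$, plug it into the recursion to get
\[
g_{n,\alpha}(x) \;=\; \underbrace{g_{1,\alpha}(x)\,h_{n-1,\alpha}(T_\alpha(x))}_{\mathrm{(I)}} \;+\; \underbrace{g_{1,\alpha}(x)\,l_{n-1,\alpha}(T_\alpha(x))}_{\mathrm{(II)}}.
\]
Term (II) will be absorbed into $l_{n,\alpha}$: controlling it branch-by-branch using $\sup_{I_j^{\pm}} g_{1,\alpha}\lesssim 1/j^2$, $\var_{I_j^{\pm}}(g_{1,\alpha})\lesssim 1/j^3$, the product inequality $\var(fg)\leq \|f\|_\infty \var g + \|g\|_\infty \var f$, and monotonicity of $T_\alpha|_{I_j^{\pm}}$ yields $\var(\mathrm{II})\leq C_1\, \var(l_{n-1,\alpha}) + C_2\, \|l_{n-1,\alpha}\|_\infty$ with constants of order $C_0$. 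For term (I), on each branch $I_j^{\pm}$ the composition $h_{n-1,\alpha}\circ T_\alpha$ has only finitely many jumps (at preimages of $J_{n-1,\alpha}$ plus the two branch endpoints), but there are countably many branches. The plan is to choose $N=N(\varepsilon,n)$ large enough that the sum of contributions from branches with $|j|>N$ (each damped by $1/j^2$) is at most $\varepsilon$, and then define $h_{n,\alpha}$ to coincide with (I) on the finite union $\bigcup_{|j|\leq N} I_j^{\pm}$ (extended smoothly by a cutoff outside), throwing the tail part of (I) together with (II) into $l_{n,\alpha}$. The identity $g_{1,\alpha}(x)\cdot |T_\alpha'(x)| = x^2\cdot(1/x^2) = 1$ is what propagates the bound $|h_{n,\alpha}'|\leq 1$ through the iteration. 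The variance amplification is at most $3C_0$ per step, yielding the claimed $\var(l_{n,\alpha})\leq 3^nC_0^{n-1}\varepsilon$; the set $J_{n,\alpha}$ has cardinality at most $2N+1$ (branch boundaries) plus $|J_{n-1,\alpha}|(2N+1)$ (preimages), so it is finite.

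\textbf{Verifying the $\alpha$-stability conditions.} As $\beta\to\alpha$ in a small neighborhood, each branch boundary $1/(j+\beta)$ of $\mathcal{P}_1$ moves Lipschitz-continuously in $\beta$ with constant $O(1)$; by induction, $J_{n-1,\beta}\subset B(J_{n-1,\alpha},r')$ with each point of $J_{n-1,\alpha}$ inflating to at most $n$ nearby points. Since the inverse branches $\sigma_j$ of $T_\alpha|_{I_j^{\pm}}$ have derivative at most $1/j^2$, preimages of points in $J_{n-1,\alpha}$ under $T_\beta$ move by at most $|\alpha-\beta|$, which gives (c). The inflation bound in (b) increases from $n$ to $n+1$ because each branch boundary of $\mathcal{P}_1$ can contribute one extra nearby jump point via the two one-sided limits $T_\alpha(x^{-})=\alpha-1$ and $T_\alpha(x^{+})=\alpha$ on either side of the singularity at $0$.

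\textbf{Main obstacle.} The delicate part will be handling parameter values where the combinatorial structure of $\mathcal{P}_1$ itself changes, namely when $\alpha$ crosses $1/k$ for some integer $k$ and the extremal branch $I_{j_{\min}}^{+}$ emerges or disappears. At such bifurcations one must verify that the newly created cylinder still contributes only a bounded, controlled number of nearby jump points (so that (b) is preserved), and that the preimage structure of $T_\beta$ is a controlled perturbation of that of $T_\alpha$ even though $j_{\min}(\beta)\neq j_{\min}(\alpha)$. This is precisely why the statement must be local in a neighborhood $U$ of $\underline{\alpha}$, and why the constant $r$ is allowed to depend on $\alpha$.
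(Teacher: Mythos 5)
Your base case is where the proposal breaks down. The function $g_{1,\alpha}$ is not smooth on $I_\alpha$: by definition $g_{1,\alpha}=\sum_{j}\frac{1}{|T_\alpha'|}\chi_{I_j}$, so it equals $x^2$ only in the interior of each branch and vanishes at the endpoints of every $I_j^\pm$, and these endpoints accumulate at $0$. Hence $g_{1,\alpha}$ has infinitely many jump discontinuities (the paper states this explicitly), and your choice $h_{1,\alpha}=g_{1,\alpha}$, $l_{1,\alpha}=0$, $J_{1,\alpha}=\emptyset$ violates condition (2) already at $n=1$. This is not a matter of choosing a nicer representative: the boundary zeros are exactly what makes the inequality $\var\Phi_\alpha^n(f)\leq\var(f\,g_{n,\alpha})$ true (they carry the boundary terms of the usual Lasota--Yorke estimate), so they cannot be discarded, and taming the infinite tail of small jumps accumulating at $0$ is precisely the content of the lemma. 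The paper does this by a single truncation at level one: fix $L=[-1/K,1/K]$ with $\var_L g_{1,\alpha}\leq\epsilon$ uniformly in $\alpha$, set $l_{1,\alpha}=g_{1,\alpha}\chi_L$ and $h_{1,\alpha}=g_{1,\alpha}\chi_{I_\alpha\setminus L}$, and then propagate multiplicatively via $h_{n+1,\alpha}=(h_{n,\alpha}\circ T_\alpha)\,h_{1,\alpha}$ and $l_{n+1,\alpha}=(l_{n,\alpha}\circ T_\alpha)\,g_{1,\alpha}+(h_{n,\alpha}\circ T_\alpha)\,l_{1,\alpha}$; since $h_{1,\alpha}$ is supported on finitely many branches, every $J_{n,\alpha}$ is automatically finite and one gets the clean bound $\var l_{n,\alpha}\leq 3^nC_0^{n-1}\epsilon$ together with $\var h_{n,\alpha}\leq 2^{n-1}C_0^n$. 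Your alternative---truncating anew at each level with an $N(\epsilon,n)$ and ``extending smoothly by a cutoff''---is not carried out: the cutoff itself creates new transition points whose $\beta$-stability you never address, and the claimed amplification ``at most $3C_0$ per step'' is asserted rather than derived.

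The second gap is in items a.--c., which are the actual point of the lemma (they feed the uniformity in $\alpha$ in lemma \ref{goodpartition} and proposition \ref{LY}). Your explanation of the count increasing from $n$ to $n+1$ invokes ``one-sided limits on either side of the singularity at $0$'', but the singularity at $0$ plays no role there (it has been relegated to $l_{n,\alpha}$); the extra point comes from the interval endpoints: when $x=\alpha$ or $x=\alpha-1$ lies on a cylinder boundary, i.e.\ $T_\alpha(x)=\alpha-1$, a nearby parameter $\beta$ produces both the endpoint $\beta$ (resp.\ $\beta-1$) itself and a preimage $T_\beta^{-1}(\beta-1)$ close to $x$, and this endpoint case analysis is exactly where your ``main obstacle'' (branches appearing or disappearing as $j_{\min}$ changes) gets resolved in the paper. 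Finally, c.\ does not follow from ``inverse branches have derivative at most $1/j^2$'' alone: one must compare the $T_\beta$-preimage structure near $x$ with the $T_\alpha$-one and use expansivity of $T_\beta$ to convert a displacement of images (of size comparable to $|\alpha-\beta|$) into a smaller displacement of preimages; as written, your sentence does not establish $|x-y|\leq|\alpha-\beta|$.
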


\begin{proof}
By induction on $n$. If $n = 1$, let us note that

$$g_{1, \alpha}(x) := \left\{ 
\begin{array}{ll} x^2 & \textup{if }x \textup{ belongs to some }I_j \\
		  0 & \textup{otherwise}
 
\end{array}
\right.$$
hence we can choose $L := [-\frac{1}{K}, \frac{1}{K}]$ an interval around $0$ such that, for all $\alpha$,
$\var_L g_{1, \alpha}\leq \epsilon$ and 
define 
$$l_{1, \alpha} := g_{1, \alpha} \chi_L \qquad h_{1, \alpha} := g_{1, \alpha} \chi_{I_\alpha \setminus L}$$
1. is clearly verified. To verify 2., note that given $x \in J_{1, \alpha}, x\neq \alpha, \alpha-1$, 
for $\beta$ sufficiently close to $\alpha$, $J_{1, \beta}$ intersects a neighbourhood of $x$ in only one point.
The same happens if $x = \alpha, \alpha-1$ and $T_\alpha(x) \neq \alpha-1$.
On the other hand, if $x = \alpha$ and $T_\alpha(\alpha) = \alpha-1$, then $J_{1,\beta} \cap [\beta-\eta, \beta] = \{y, \beta\}$
contains at most two points, where $y = T_\beta^{-1}(\beta-1) \cap [\beta-\eta, \beta]$ and, since $T_\beta$ is expanding, 
$|y - \alpha| \leq |\alpha - \beta|$. The case $x = \alpha-1$, $T_\alpha(\alpha-1) = \alpha-1$ is similar.

In order to prove the inductive step, let us remark that $g_{n+1, \alpha} = g_{n, \alpha} \circ T_\alpha \cdot g_{1, \alpha}$. Hence, 
we can define 

$$\begin{array}{lll}
    h_{n+1, \alpha} & := & h_{n, \alpha} \circ T_\alpha \cdot h_{1, \alpha} \\
    l_{n+1, \alpha} & := & l_{n, \alpha} \circ T_\alpha \cdot g_{1, \alpha} + h_{n, \alpha} \circ T_\alpha \cdot l_{1, \alpha}
\end{array}$$
and check all properties are satisfied. 
First of all, we can prove by induction that 
\begin{equation} \label{variterate}
\var_{I_\alpha} h_{n, \alpha} \leq 2^{n-1} C_0^n \qquad \forall \alpha \in (0,1), \forall n \geq 1
\end{equation}
Indeed, 
\begin{small}
$$\var_{I_\alpha} h_{1, \alpha} \leq \var_{I_\alpha} g_{1, \alpha} \leq C_0$$
$$\var_{I_\alpha} h_{n+1, \alpha} = \sum_{k \in \mathcal{P}_1} \var_{\overline{I_k}} (h_{n, \alpha} \circ T_\alpha \cdot h_{1, \alpha}) \leq
\sum_{k \in \mathcal{P}_1} \var_{\overline{I}_k} (h_{n, \alpha} \circ T_\alpha) \sup_{\overline{I}_k} h_{1, \alpha} + \sup_{\overline{I}_k} (h_{n, \alpha} \circ T_\alpha) \var_{\overline{I}_k} h_{1, \alpha} \leq$$
\end{small}
and since $T_\alpha \mid_{I_k}$ is a homeomorphism 
\begin{small}
$$ \leq \var_{I_\alpha} h_{n, \alpha} \sum_{k \in \mathcal{P}_1} \sup_{\overline{I}_k} h_{1, \alpha} + \sup_{I_\alpha} h_{n, \alpha} \sum_{k \in \mathcal{P}_1} \var_{\overline{I}_k} h_{1, \alpha}
\leq 2 \var_{I_\alpha} h_{n, \alpha} \var_{I_\alpha} h_{1, \alpha}\leq 2 \cdot 2^{n-1}C_0^{n} \cdot C_0$$ 
\end{small}
where in the penultimate inequality we used the fact that $\sup_I f \leq \var_I f$ if $f(x) = 0$ for some $x \in I$. 

Let us now check 1.: similarly as before,
$$\var_{I_\alpha} l_{n+1, \alpha} =
\var_{I_\alpha}  (l_{n, \alpha} \circ T_\alpha \cdot g_{1, \alpha} + h_{n, \alpha} \circ T_\alpha \cdot l_{1, \alpha}) \leq
 2 \var_{I_\alpha} l_{n, \alpha} \var_{I_\alpha} g_{1, \alpha} + 2 \var_{I_\alpha} h_{n, \alpha} \var_{I_\alpha} l_{1, \alpha} \leq$$
and by inductive hypothesis and \eqref{variterate}
$$\leq 2 \cdot 3^n C_0^{n-1} \epsilon \cdot C_0 + 2 \cdot 2^{n-1} C_0^n \cdot \epsilon \leq 3^{n+1} C_0^n \epsilon$$

Since $h_{1, \alpha}$ is nonzero only on finitely many branches of $T_\alpha$, then $h_{n+1, \alpha}$ has only finitely many jump 
discontinuities. Now, if $x$ is a jump discontinuity for $h_{n, \alpha} \circ T_\alpha$ and not for $h_{1, \alpha}$, 
then $T_\beta$ is an expanding local homeomorphism at $x$ for all $\beta$ in a neighbourhood of $\alpha$, hence a., b. and c. follow.
Let now $x \neq \alpha, \alpha-1$ be on the boundary of some cylinder, i.e. $T_\alpha(x) = \alpha-1$.
Then by inductive hypothesis c., if $\beta > \alpha$ is sufficiently close to $\alpha$ and $\eta$ is sufficiently small, then
$$J_{n, \beta} \cap [\beta-1, \beta -1 + \eta] = \{\beta-1\}$$
hence $$J_{n+1, \beta} \cap B(x, r) = T_{\beta}^{-1}(J_{n, \beta}\cap [\beta-\eta, \beta]) \cap B(x, r)$$
and b. follows. c. follows from the fact that $T_\beta$ is expanding. If $\beta < \alpha$, similarly the claims follow because
$$J_{n+1, \beta} \cap B(x, r) = T_{\beta}^{-1}(J_{n, \beta}\cap [\beta-1, \beta-1+\eta]) \cap B(x, r)$$
If $x = \alpha$, then for $\beta$ sufficiently close to $\alpha$, 
$$J_{n+1, \beta} \cap B(x, r) \subseteq (T_\beta^{-1}(J_{n, \beta}) \cup \{\beta\}) \cap B(x, r)$$ 
has cardinality at most $n+2$, and c. follows because $T_\alpha$ is expanding. 
The case $x = \alpha-1$ is analogous.

\end{proof}

\begin{lemma} \label{goodpartition}
Let $\alpha \in (0,1)$, $n \geq 1$ and $\epsilon > 0$. Then there exist $\eta > 0$, $C > 0$ and a finite partition
 of $[\alpha-1-\eta, \alpha +\eta]$ in closed intervals $L_1, \dots L_r$ such that for each $\beta \in (\alpha - \eta, \alpha + \eta)$
and each $i \in {1, \dots, r}$ the following holds:
\begin{itemize}
 \item $ 0 < C \leq m(L_{i, \beta}) \leq \epsilon$
\item $\var_{L_{i, \beta}} g_{n, \beta} \leq 2(n+1) \Vert g_{n, \beta} \Vert_\infty + 2\epsilon$
\end{itemize}
where $L_{i, \beta} := L_i \cap [\beta-1, \beta]$.
\end{lemma}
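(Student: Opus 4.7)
The plan is to use the decomposition $g_{n,\beta}=h_{n,\beta}+l_{n,\beta}$ supplied by Lemma \ref{technical}, exploit the fact that $h_{n,\beta}$ has only finitely many jump discontinuities clustered near the fixed set $J_{n,\alpha}$, and design the partition $\{L_i\}$ so that each piece contains at most one such cluster.

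More concretely, I would first choose $\epsilon'>0$ so small that $3^{n}C_{0}^{n-1}\epsilon'\le\epsilon$, then apply Lemma \ref{technical} with this $\epsilon'$. This produces the decomposition and, in particular, delivers a radius $r_0>0$ and a neighbourhood $U_{0}=(\alpha-\eta_{0},\alpha+\eta_{0})$ such that for every $\beta\in U_{0}$ the jump set $J_{n,\beta}$ is contained in $\bigcup_{y\in J_{n,\alpha}}B(y,r_{0})$, each ball $B(y,r_{0})$ meets $J_{n,\beta}$ in at most $n+1$ points, and these points lie within $|\beta-\alpha|$ of $y$. Shrinking $r_{0}$ if needed, I can assume the balls $B(y,r_{0})$ are pairwise disjoint and contained in $(\alpha-1,\alpha)$, and $r_{0}<\epsilon/2$.

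Write $J_{n,\alpha}=\{y_{1}<\dots<y_{k}\}$. The partition is built in two sorts of pieces: \emph{jump pieces} $L_{i}=[y_{i}-r_{0},y_{i}+r_{0}]$, one around each discontinuity, and \emph{regular pieces} obtained by subdividing the complement $[\alpha-1-\eta,\alpha+\eta]\setminus\bigcup_{i}\mathrm{int}(L_{i})$ into closed intervals of length in $[C_{1},\epsilon]$ for some fixed $C_{1}>0$ (this is possible since the complement is a finite union of intervals of positive length). Finally pick $\eta>0$ so small that $\eta<\min(\eta_{0},r_{0},C_{1}/2)$; the condition $\eta<C_{1}/2$ guarantees that each regular piece loses at most $\eta\le C_{1}/2$ of its length when intersected with $[\beta-1,\beta]$, and similarly each jump piece keeps length at least $r_{0}$. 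Hence $m(L_{i,\beta})\ge C:=\min(r_{0},C_{1}/2)$, while by construction $m(L_{i,\beta})\le m(L_{i})\le\epsilon$.

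For the variation bound I would split $\var_{L_{i,\beta}}g_{n,\beta}\le\var_{L_{i,\beta}}h_{n,\beta}+\var_{I_{\beta}}l_{n,\beta}$. The second term is at most $3^{n}C_{0}^{n-1}\epsilon'\le\epsilon$ by our choice of $\epsilon'$ and part (1) of Lemma \ref{technical}. For the first term, recall $h_{n,\beta}$ is nonnegative, smooth with $|h_{n,\beta}'|\le1$ off $J_{n,\beta}$, so on any interval $L_{i,\beta}$ one has $\var_{L_{i,\beta}}h_{n,\beta}\le2\|h_{n,\beta}\|_{\infty}\cdot\#(J_{n,\beta}\cap L_{i,\beta})+m(L_{i,\beta})$. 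On a regular piece $J_{n,\beta}\cap L_{i,\beta}=\emptyset$ by construction, so this is at most $\epsilon$; on a jump piece it is at most $2(n+1)\|h_{n,\beta}\|_{\infty}+\epsilon\le2(n+1)\|g_{n,\beta}\|_{\infty}+\epsilon$ since $0\le h_{n,\beta}\le g_{n,\beta}$. Adding the two contributions gives $\var_{L_{i,\beta}}g_{n,\beta}\le2(n+1)\|g_{n,\beta}\|_{\infty}+2\epsilon$ in both cases.

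The main obstacle I anticipate is the bookkeeping at the endpoints $\alpha-1$ and $\alpha$: as $\beta$ varies the interval $I_{\beta}$ shifts, so the extreme pieces of the partition can shrink when intersected with $I_{\beta}$, and the points $\alpha-1$ or $\alpha$ themselves may or may not belong to $J_{n,\alpha}$ (the boundary case handled in Lemma \ref{technical}). The role of choosing $\eta$ strictly smaller than half the minimum piece length is precisely to absorb this shift and keep the lower bound $C>0$ uniform; the upper bound on the number of jumps per cluster, which relies on property (b) of Lemma \ref{technical}, is what forces the $n+1$ factor in the variation estimate.
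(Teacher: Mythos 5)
Your proposal is correct and follows essentially the same route as the paper's proof: rescale $\epsilon$ by $3^{n}C_{0}^{n-1}$ before invoking Lemma \ref{technical}, isolate each jump cluster of $h_{n,\beta}$ in its own partition piece so that property (b) gives at most $n+1$ jumps there, bound $\var_{L_{i,\beta}} h_{n,\beta}$ by $m(L_{i,\beta})$ plus the jump contributions, and let $\var_{I_\beta} l_{n,\beta}\leq\epsilon$ absorb the rest. The only slip is the claim that the balls $B(y,r_{0})$ may be assumed to lie inside $(\alpha-1,\alpha)$ --- the points $\alpha$ and $\alpha-1$ can themselves belong to $J_{n,\alpha}$ --- but since the partition is of the enlarged interval $[\alpha-1-\eta,\alpha+\eta]$ this costs nothing, as your closing remark on endpoint bookkeeping effectively acknowledges.
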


\begin{proof}
Given $\alpha, n, \epsilon$, choose $L_1, \dots, L_r$ in such a way that $m(L_i) \leq \epsilon$, each element of $J_{n, \alpha}$ 
lies in the interior of some $L_i$ and no two such elements lie in the same $L_i$. Moreover, 
set $\epsilon_1 := \epsilon/(3^nC_0^{n-1})$ and, for each $\beta$ sufficiently close to $\alpha$, choose a decomposition
$g_{n, \beta} = h_{n, \beta} + l_{n, \beta}$ as in lemma \ref{technical} relative to $\epsilon_1$.
  
$$\var_{L_{i, \beta}} h_{n, \beta} \leq \int_{L_{i, \beta} \setminus J_{n, \beta}} h'_{n, \beta}(x) dx + \sum_{x \in L_{i, \beta} \cap J_{n, \beta}} \lim_{y \to x^-} h_{n, \beta}(y) + \lim_{y \to x^+} h_{n, \beta}(y) \leq $$
$$\leq m(L_{i, \beta}) + 2 \#\{L_{i, \beta} \cap J_{n, \beta}\} \Vert h_{n, \beta} \Vert_\infty \leq \epsilon + 2(n+1) \Vert h_{n, \beta} \Vert_\infty$$
hence $\var_{L_{i, \beta}} g_{n, \beta} \leq \var_{L_{i, \beta}} h_{n, \beta} + l_{n, \beta} \leq 2 \epsilon + 2(n+1)\Vert g_{n, \beta} \Vert_\infty$. 
\end{proof}

\noindent \textit{Proof of proposition \ref{LY}. } 
Consider the partition $L_1, \dots, L_r$ given by lemma \ref{goodpartition}. Then
$$\var (f \cdot g_{n, \alpha} ) = \sum_{i = 1}^r \var_{L_i} (f g_{n, \alpha}) \leq \sum_{i=1}^r \var_{L_i} f \sup_{L_i} g_{n, \alpha} +
 \var_{L_i} g_{n, \alpha} \sup_{L_i} f  \leq $$
$$ \leq \sum_{i=1}^r \Vert g_{n, \alpha}   \Vert_\infty  \var_{L_i} f + \var_{L_i} g_{n, \alpha} \left( \frac{1}{m(L_{i, \alpha})} \int_{L_i} f(x) dx + \var_{L_i} f \right) \leq $$
$$\leq [(2n+3) \Vert g_{n, \alpha} \Vert_\infty + 2 \epsilon] \var_{I_\alpha} f + \frac{(2n+2) \Vert g_{n,\alpha} \Vert_\infty + 2 \epsilon}{C} \int_{I_\alpha} f(x) dx $$
Now, since $\Vert g_{n, \alpha} \Vert_\infty \leq \gamma_\alpha^n$ decays exponentially,
 we can choose $n$ large enough so that $\lambda:= (2n+4) \gamma_\alpha^n < 1$, and we can also choose $2 \epsilon \leq \gamma_\alpha^n$, 
hence we get that for some constant $D >0$, for each $\alpha \in (\underline{\alpha} - \eta, \underline{\alpha} + \eta)$, 
\begin{equation} \label{finalLY}
\var \Phi_\alpha^n(f) \leq \lambda \var f + D \Vert f \Vert_1
\end{equation}
and by iteration and euclidean division (see e.g. \cite{Rychlik}, lemma 7 and prop. 1) the claim is proven. 
\qed

\subsection{Stability of spectral decomposition} \label{Kellerdistsect}

The next step to prove H\"older-continuity is proving the continuous dependence of invariant densities $\rho_\alpha$ in $L^1$-norm. 
In order to guarantee the stability of spectral projectors of the transfer operator, we will use the following theorem of Keller and Liverani \cite{KellerLiverani}:

\begin{theorem} \label{KellerLiv}
 Let $P_{\epsilon}$ be a family of bounded linear operators on a Banach space $(B, \Vert \cdot \Vert)$ which is also equipped with a second 
norm $\vert \cdot \vert$ such that $\vert \cdot \vert \leq \Vert \cdot \Vert$. Let us assume that the following conditions hold: 
\begin{enumerate}
 \item $\exists C_1, M > 0$ s.t. for all $\epsilon \geq 0$
$$|P_{\epsilon}^n| \leq C_1 M^n \qquad \forall n \in \mathbb{N}$$
\item $\exists C_2, C_3 > 0$ and $\lambda \in (0,1)$, $\lambda < M$, such that for all $\epsilon \geq 0$
$$\Vert P_\epsilon^n f \Vert \leq C_2 \lambda^n \Vert f \Vert + C_3 M^n \vert f \vert \quad \forall n \in \mathbb{N} \quad \forall f \in B$$
\item if $z \in \sigma(P_{\epsilon}), |z| > \lambda$, then $z$ is not in the residual spectrum of $P_\epsilon$
\item There is a monotone continuous function $\tau : [0, \infty) \rightarrow [0, \infty)$ such that $\tau(\epsilon) > 0$ if $\epsilon > 0$ and 
$$ \vert \! \vert \! \vert P_0 - P_\epsilon \vert \! \vert \! \vert \leq \tau(\epsilon) \rightarrow 0 \quad as \ \epsilon \rightarrow 0$$
where the norm $| \! | \! | \cdot |\!|\!|$ is defined as 
$$| \! | \! | Q | \! | \! | := \sup_{\Vert f \Vert \leq 1} |Qf|$$
\end{enumerate}

Let us now fix $\delta > 0$ and $r \in (\lambda, M)$ and define
$$V_{\delta, r} := \{ z \in \mathbb{C} : |z| \leq r \textrm{ or dist }(z, \sigma(P_0)) \leq \delta \}$$
and $\eta := \frac{\log(\lambda/r)}{\log(\lambda/M)}$. Then there exist $H, K > 0$ such that if $\tau(\epsilon) \leq H$ then $\sigma(P_{\epsilon}) \subseteq V_{\delta, r}$ and 
$$\vert \! \vert \! \vert (z-P_{\epsilon})^{-1} - (z-P_0)^{-1} \vert \! \vert \! \vert \leq K \tau(\epsilon)^{\eta} \qquad \forall z \notin V_{\delta, r}$$
\end{theorem}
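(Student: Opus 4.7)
The plan is to run a perturbative resolvent analysis anchored on the uniform quasi-compactness supplied by condition 2, with the H\"older exponent emerging from an optimization that trades the smallness of $P_\epsilon - P_0$ in $\vert \! \vert \! \vert \cdot \vert \! \vert \! \vert$ against the expansion rate $M$ and the spectral gap $\lambda$.

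First I would extract the uniform spectral picture. Conditions 1 and 2 together imply, via a Hennion-type argument, that each $P_\epsilon$ is quasi-compact on $(B, \Vert \cdot \Vert)$ with essential spectral radius at most $\lambda$; combined with condition 3 this pins the peripheral part of $\sigma(P_\epsilon)$ to finitely many eigenvalues of finite multiplicity. For $z \notin V_{\delta, r}$ one already has $\Vert (z - P_0)^{-1} \Vert \leq K_0$ uniformly, and a standard continuity-of-spectrum argument using condition 4 together with this spectral gap gives $\sigma(P_\epsilon) \subseteq V_{\delta, r}$ and $\Vert (z - P_\epsilon)^{-1} \Vert \leq K_1$ uniformly, provided $\tau(\epsilon) \leq H$ for some small threshold $H$.

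The heart of the proof is the second resolvent identity
$$R_\epsilon - R_0 = R_\epsilon (P_\epsilon - P_0) R_0,$$
where $R_\epsilon := (z - P_\epsilon)^{-1}$ and $R_0 := (z - P_0)^{-1}$, combined with the truncated Neumann expansion $R_\epsilon = \sum_{n=0}^{N-1} z^{-(n+1)} P_\epsilon^n + z^{-N} P_\epsilon^N R_\epsilon$, valid for any $N \geq 1$. For $f$ with $\Vert f \Vert \leq 1$, setting $g := (P_\epsilon - P_0) R_0 f$, condition 4 yields $\vert g \vert \leq \tau(\epsilon) K_0$ while $\Vert g \Vert$ remains uniformly bounded. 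Splitting $\vert R_\epsilon g \vert$ via the truncated expansion, the low-frequency sum $\sum_{n < N} z^{-(n+1)} P_\epsilon^n g$ is controlled in $\vert \cdot \vert$ via condition 1 by $C \tau(\epsilon) (M/r)^N$, while the high-frequency remainder $z^{-N} P_\epsilon^N R_\epsilon g$ is controlled via the Lasota--Yorke inequality as a sum of a contracting $(\lambda/r)^N$ contribution and an $(M/r)^N$ contribution; the latter has to be re-absorbed by using the finite-rank peripheral spectral decomposition of $P_\epsilon$ from the first step, after which it acquires the required $\tau(\epsilon)$ factor.

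Balancing the two resulting terms by choosing $N \sim \log(1/\tau(\epsilon)) / \log(M/\lambda)$ makes both of order $\tau(\epsilon)^\eta$ with $\eta = \log(\lambda/r)/\log(\lambda/M) \in (0,1)$, uniformly in $z \notin V_{\delta, r}$. The main obstacle is precisely the re-absorption of the $(M/r)^N$ contribution in the high-frequency remainder: since $P_\epsilon - P_0$ is small only in the weaker norm, the Neumann series for $R_\epsilon - R_0$ does not converge in $\Vert \cdot \Vert$ and the $\tau(\epsilon)^k$ factors cannot be compounded naively. Extracting weak-norm smallness for the remainder forces a careful separation of the peripheral finite-rank part of $R_\epsilon$ (continuous in both norms) from its quasi-compact tail, and it is this mixed-norm bookkeeping that produces the sub-Lipschitz exponent $\eta < 1$ rather than Lipschitz continuity.
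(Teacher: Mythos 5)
You should first note that the paper contains no proof of this statement at all: it is quoted verbatim as a tool from Keller and Liverani \cite{KellerLiverani}, so the only meaningful comparison is with their published argument. Your skeleton does match theirs in its final stage: truncate the resolvent as $\sum_{k=0}^{N-1} z^{-(k+1)}P_\epsilon^k$ plus a tail, do mixed-norm bookkeeping, and balance a $(\lambda/r)^N$ contribution against a $\tau(\epsilon)(M/r)^N$ contribution by taking $N\sim\log(1/\tau(\epsilon))/\log(M/\lambda)$, which is exactly where the exponent $\eta=\log(\lambda/r)/\log(\lambda/M)$ comes from.

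However, the two steps you wave at are precisely the ones that fail as written. First, the inclusion $\sigma(P_\epsilon)\subseteq V_{\delta,r}$ cannot come from ``a standard continuity-of-spectrum argument using condition 4'': spectra are upper semicontinuous only under perturbations small in the operator norm of $(B,\Vert\cdot\Vert)$, whereas (4) gives smallness only in the weak norm $\vert\!\vert\!\vert\cdot\vert\!\vert\!\vert$ (from $\Vert\cdot\Vert$ to $\vert\cdot\vert$); overcoming exactly this is the content of the theorem, not something one may cite. Second, ``re-absorbing'' the $(M/r)^N$ term in the tail via the finite-rank peripheral spectral decomposition of $P_\epsilon$ is circular: at that stage you know neither where $\sigma(P_\epsilon)$ lies nor that its peripheral projectors are bounded uniformly in $\epsilon$ (such uniformity is a corollary of the theorem --- it is what makes lemma \ref{convergence} in the paper work --- not an input); moreover quasi-compactness of $P_\epsilon$ does not follow from (1)--(2) by Hennion's theorem unless one also assumes the unit ball of $(B,\Vert\cdot\Vert)$ is $\vert\cdot\vert$-compact, an assumption the theorem does not make (hypothesis (3) is there precisely to replace it). The correct device, and the one Keller--Liverani use, avoids $R_\epsilon:=(z-P_\epsilon)^{-1}$ and any spectral data of $P_\epsilon$ altogether: set $Q_N:=\sum_{k=0}^{N-1}z^{-(k+1)}P_\epsilon^k+z^{-N}P_\epsilon^N(z-P_0)^{-1}$, so that $(z-P_\epsilon)Q_N=I+z^{-N}P_\epsilon^N(P_0-P_\epsilon)(z-P_0)^{-1}$; by (2), (4) and the uniform bound on $(z-P_0)^{-1}$ off $V_{\delta,r}$, the error operator has $\Vert\cdot\Vert$-norm $O\bigl((\lambda/r)^N+\tau(\epsilon)(M/r)^N\bigr)$, which your choice of $N$ makes $O(\tau(\epsilon)^\eta)<1$. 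A Neumann series in this error operator (not in $P_\epsilon-P_0$, whose smallness is only weak-norm) then yields a bounded one-sided inverse, which (3) together with (1)--(2) upgrades to invertibility; this gives the spectral inclusion and a uniform bound on $(z-P_\epsilon)^{-1}$ simultaneously, after which the resolvent identity you wrote, with the low-order sum controlled in $\vert\cdot\vert$ by (1) and the tail by the same optimization, delivers $\vert\!\vert\!\vert(z-P_\epsilon)^{-1}-(z-P_0)^{-1}\vert\!\vert\!\vert\leq K\tau(\epsilon)^\eta$. With that replacement your outline becomes a proof; without it, both the spectral inclusion and the re-absorption step are unsupported.
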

\noindent In our context, the norm $|\cdot|$ will be the $L^1$ norm and $\Vert \cdot \Vert$ will be the $BV$ norm. 
Our goal is to apply this result to the family
$\{\Phi_\alpha\}_{\alpha \in U}$ where $U$ is a suitable neighbourhood of a given $\underline{\alpha} \in (0,1)$.

Hypothesis (1) is trivial since transfer operators have unit $L^1$-norm, and (2) is precisely  
proposition \ref{LY}.
In the context of one-dimensional piecewise expanding maps, (3) is an immediate corollary of (2):

\begin{lemma} \label{henn}
For every $\underline{\alpha} \in (0,1)$ there exists $\epsilon > 0$ such that for
$|\alpha - \underline{\alpha}| < \epsilon$, $$\rho_{ess}(\Phi_{\alpha}) \leq \lambda$$
where $\lambda$ is the same as in proposition \ref{LY} and therefore condition (3) holds.
\end{lemma}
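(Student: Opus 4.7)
The plan is to invoke Hennion's theorem (or equivalently, the Ionescu--Tulcea--Marinescu decomposition already used to prove theorem \ref{spectraldec}) to convert the uniform Lasota--Yorke estimate of proposition \ref{LY} into a uniform bound on the essential spectral radius, and then to deduce absence of residual spectrum above $\lambda$ from the discreteness of the spectrum there.

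First, I would recall Hennion's criterion: if $P$ is a bounded operator on a Banach space $(B, \Vert \cdot \Vert)$ which also carries a weaker norm $|\cdot|$ such that the unit ball of $(B, \Vert \cdot \Vert)$ is relatively compact in $(B, |\cdot|)$, and if $P$ satisfies $\Vert P^n f \Vert \leq C \lambda^n \Vert f \Vert + D \vert f \vert$ together with $|P^n| \leq M^n$, then $\rho_{ess}(P) \leq \lambda$. In our setting $B = BV(I_\alpha)$ with $\Vert \cdot \Vert = \Vert \cdot \Vert_{BV}$ and $|\cdot| = \Vert \cdot \Vert_{L^1}$. The required compactness is Helly's selection theorem: a sequence of $BV$ functions of uniformly bounded norm admits a subsequence convergent in $L^1$. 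The Lasota--Yorke bound of proposition \ref{LY} is exactly of the Hennion form, with constants uniform for $\alpha$ in a neighbourhood $U$ of $\underline{\alpha}$; the elementary bound $|\Phi_\alpha^n|_{L^1} \leq 1$ supplies the other hypothesis. Hennion's theorem then yields $\rho_{ess}(\Phi_\alpha) \leq \lambda$ for every $\alpha \in U$.

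Once we have this essential spectral radius estimate, condition (3) follows from a standard spectral theory argument. Any $z \in \sigma(\Phi_\alpha)$ with $|z| > \lambda \geq \rho_{ess}(\Phi_\alpha)$ must lie in the discrete spectrum, i.e.\ be an isolated eigenvalue of finite algebraic multiplicity. In particular its generalized eigenspace is nonzero, so $\mathrm{Ran}(z - \Phi_\alpha)$ cannot be dense (otherwise the associated spectral projector would vanish on a dense set and hence identically, contradicting the existence of an eigenvector). More directly: since $z$ is an eigenvalue, the dual operator $\Phi_\alpha^\ast$ also has $z$ as an eigenvalue, which is the very definition of $z$ not belonging to the residual spectrum.

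The only non-routine point is verifying that the uniformity in $\alpha$ of the Lasota--Yorke constants (already established in proposition \ref{LY}) indeed transfers to uniformity of the essential spectral radius bound; this is automatic since Hennion's theorem depends only on the constants $C, \lambda, D, M$ appearing in the inequalities, all of which we have controlled uniformly on $U$. No new estimates are required here.
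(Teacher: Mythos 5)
Your argument is correct and is essentially the paper's own proof: the paper likewise cites Hennion's theorem, combining the uniform Lasota--Yorke inequality of proposition \ref{LY} with the compactness of the injection $BV(I)\to L^1(I)$ to bound the essential spectral radius uniformly on a neighbourhood of $\underline{\alpha}$, and then notes that spectral points of modulus greater than $\lambda$ are eigenvalues of finite multiplicity and therefore cannot lie in the residual spectrum. Your extra remarks on why an eigenvalue is excluded from the residual spectrum (the simplest being that an eigenvalue belongs to the point spectrum, so $z-\Phi_\alpha$ is not injective) are fine but add nothing beyond the paper's one-line conclusion.
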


\begin{proof}
By a result of Hennion \cite{Hennion}, the uniform Lasota-Yorke inequality plus the fact that the injection 
$BV(I) \rightarrow L^1(I)$ is compact implies the estimate on the essential spectral radius; therefore the elements 
of the spectrum of modulus bigger than $\lambda$ are eigenvalues with finite multiplicity and cannot belong to the residual spectrum. 
\end{proof}

To prove condition (4) it is necessary to estimate the distance between the $\Phi_{\alpha}$ as $\alpha$ varies in a neighbourhood of a fixed $\underline{\alpha}$; by a result of Keller \cite{Keller} the distance between the 
transfer operators is related to the following distance between the transformations:

\begin{definition}
Let $T_1, T_2 : I \rightarrow I$ two maps of the interval $I$. We define the \emph{Keller distance} between $T_1$ and $T_2$ as
$$d(T_1, T_2) := \inf \{ \kappa > 0 \ | \ \exists A \subset I \textup{ measurable with }m(A) > 1 - \kappa, $$
$$  \exists \sigma : I \rightarrow I \ \textup{diffeo s.t. } T_1\mid_A = T_2\circ \sigma\mid_A, \ \sup_{x \in I}|\sigma(x) -x| < \kappa,\ \sup_{x \in I}\left|\frac{1}{\sigma'(x)}-1\right| < \kappa \}$$
\end{definition}
 
\begin{lemma}[\cite{Keller}, lemma 13] \label{Kellerlemma}
If $P_1$ and $P_2$ are the transfer operators associated to the interval maps $T_1$ and $T_2$, then $\vert \! \vert \! \vert P_1 - P_2 \vert \! \vert \! \vert \leq 12 d(T_1, T_2)$
where $d$ is the Keller distance.
\end{lemma}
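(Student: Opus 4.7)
The plan is to proceed by duality, passing the difference $P_1-P_2$ from the transfer operators onto compositions, and then to exploit the fact that the left test function lies in $BV$ while the right one is only in $L^\infty$. Since $(L^1)^*=L^\infty$ and since the transfer operator is defined by the identity $\int g\,P_i f\,dx = \int f\,(g\circ T_i)\,dx$, one has
$$\vert\!\vert\!\vert P_1 - P_2 \vert\!\vert\!\vert \;=\; \sup_{\|f\|_{BV}\le 1}\,\sup_{\|g\|_\infty \le 1}\,\int_I f\,(g\circ T_1 - g\circ T_2)\,dx,$$
so it suffices to bound the right-hand side by $12\kappa$ for every $\kappa>d(T_1,T_2)$ and let $\kappa\searrow d(T_1,T_2)$. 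Fix such a $\kappa$, a measurable $A\subset I$ with $m(I\setminus A)<\kappa$, and a diffeomorphism $\sigma\colon I\to I$ witnessing the Keller distance.

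Split the integral as $\int_A+\int_{I\setminus A}$. The complementary piece is controlled crudely by $2\|g\|_\infty \|f\|_\infty m(I\setminus A)\le 2\kappa\|f\|_{BV}$, using $\|f\|_\infty\le\|f\|_{BV}$ on the unit-length interval. On $A$ the identity $g\circ T_1=(g\circ T_2)\circ\sigma$ applies, and the change of variables $y=\sigma(x)$ in the first summand yields
$$\int_A f\,(g\circ T_1 - g\circ T_2)\,dx \;=\; \int_I (g\circ T_2)(y)\left[\frac{f(\sigma^{-1}(y))}{\sigma'(\sigma^{-1}(y))}\chi_{\sigma(A)}(y) - f(y)\chi_A(y)\right]dy,$$
which in modulus is at most the $L^1$-norm of the bracket. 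I would split that bracket into three pieces: the Jacobian defect $((1/\sigma'\circ\sigma^{-1})-1)(f\circ\sigma^{-1})\chi_{\sigma(A)}$, the shift $(f\circ\sigma^{-1}-f)\chi_{A\cap\sigma(A)}$, and the boundary term $(f\circ\sigma^{-1})\chi_{\sigma(A)\setminus A}-f\chi_{A\setminus\sigma(A)}$. The first is bounded by $\kappa\int_A|f|\sigma'\,dx\le 2\kappa\|f\|_{L^1}$ since $|1/\sigma'-1|<\kappa$; the third by $\|f\|_\infty\,m(A\,\triangle\,\sigma(A))\le 3\kappa\|f\|_{BV}$, because the Jacobian bound gives $m(\sigma(A))\ge(1-\kappa)/(1+\kappa)$, whence $m(I\setminus\sigma(A))\le 2\kappa$.

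The only genuinely nontrivial step, and therefore the main obstacle, is the shift estimate $\|f\circ\sigma^{-1}-f\|_{L^1}\le 2\kappa\,\var f$. Translation-invariance of $L^1$ does not apply directly because $\sigma^{-1}(y)-y$ is not constant; I would instead bound $|f(\sigma^{-1}(y))-f(y)|$ pointwise by the variation of $f$ on the interval $[y-\kappa,y+\kappa]$, and then invoke Fubini in the double integral $\int_I\var_{[y-\kappa,y+\kappa]}f\,dy$, viewing $\var f$ as a measure. Each pointwise mass contributes to the $y$-integral through an interval of length $2\kappa$, yielding the factor $2\kappa\,\var f$. Assembling the three pieces of the bracket together with the $2\kappa\|f\|_{BV}$ contribution from $I\setminus A$ and carefully tracking constants gives the stated coefficient $12$. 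The whole argument uses nothing beyond the three defining inequalities for $\kappa$ in the Keller distance and the elementary perturbation properties of $BV$ functions.
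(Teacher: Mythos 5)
The paper does not actually prove this lemma --- it is quoted verbatim from Keller's paper (lemma 13 there), so there is no internal proof to compare against; your argument is in fact a faithful reconstruction of Keller's original one (duality, change of variables along $\sigma$, and the split into Jacobian defect, shift and boundary terms), and it is essentially correct --- your accounting even yields $9\kappa$ rather than $12\kappa$. Two small points should be made explicit. First, the step $\kappa\int_A|f|\,\sigma'\,dx\le 2\kappa\Vert f\Vert_{L^1}$ needs $\sigma'\le 2$, i.e. $\kappa\le 1/2$; this costs nothing, since $\vert\!\vert\!\vert P_1-P_2\vert\!\vert\!\vert\le 2$ always (both operators have unit $L^1$-norm and $\Vert f\Vert_{L^1}\le\Vert f\Vert_{BV}$), so the claimed bound is trivial whenever $\kappa\ge 1/6$ and you may assume $\kappa$ small from the outset. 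Second, since $f$ is only an equivalence class, the pointwise estimate $|f(\sigma^{-1}(y))-f(y)|\le \var_{[y-\kappa,y+\kappa]\cap I}f$ and the Fubini argument require fixing a representative of (essentially) minimal variation, exactly as the paper does at the start of Section 2; with that representative your bound $\int_I \var_{[y-\kappa,y+\kappa]\cap I}f\,dy\le 2\kappa\var_I f$ is correct. With these two remarks inserted, the proof is complete.
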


We verify now that this convergence result applies to our case of $\alpha$-continued fractions. In order to do so, it is necessary to translate the maps in such a way that they are all defined on the same interval, which will be $[0,1]$ in our case. We therefore consider the maps $\tilde{T}_{\alpha} : [0,1] \rightarrow [0,1]$
$$ \tilde{T}_{\alpha}(x) = T_{\alpha}(x+\alpha-1) + 1-\alpha $$ 
The relative invariant densities will be 
$$\tilde{\rho}_\alpha(x) = \rho_\alpha(x+ \alpha-1)$$

\begin{lemma} \label{Kellerdist}
Fix $\underline{\alpha} \in (0,1)$. Then there exists a neighbourhood $U$ of $\underline{\alpha}$ and 
a positive constant $C$ such that, for $\alpha, \beta \in U$, we have 
$$ d(\tilde{T}_{\alpha}, \tilde{T}_{\beta}) \leq C |\alpha -\beta|^{1/2}$$
\end{lemma}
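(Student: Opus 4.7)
Let $\delta := |\alpha - \beta|$; the plan is to restrict $U$ enough that $\delta$ is small and $j_{\min}$ is constant on $U$, then to construct a single ``cylinder-respecting'' candidate $\sigma_0$ realizing $\tilde T_\alpha = \tilde T_\beta \circ \sigma$ on most of $[0,1]$, and finally to correct $\sigma_0$ near the two endpoints so as to obtain a diffeomorphism of $[0,1]$ onto itself. Since every branch of $T_\alpha$ has the form $T_\alpha(u) = 1/|u| - a_\alpha(u)$, once a cylinder of $T_\alpha$ corresponds to a cylinder of $T_\beta$ with the same integer $a$, the equality $T_\alpha(u) - T_\beta(v) = \alpha - \beta$ that is needed after the affine normalization to $\tilde T$ reduces to $1/|v| = 1/|u| + (\beta - \alpha)$. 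Solving this with sign preservation motivates defining $\sigma_0(x) := v(u) + 1 - \beta$, with $u = x + \alpha - 1$ and
\[
v(u) := \frac{u}{1 + u(\beta - \alpha)} \text{ if } u \geq 0, \qquad v(u) := \frac{u}{1 - u(\beta - \alpha)} \text{ if } u \leq 0.
\]

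Two elementary checks would then be carried out. First, the two pieces glue into a $C^1$ map on $[0,1]$: they coincide at $u = 0$ and share the derivative $1$ there. Second, the Möbius map $v$ sends each interior cylinder endpoint $\pm 1/(j + \alpha)$ of $T_\alpha$ exactly to the corresponding endpoint $\pm 1/(j + \beta)$ of $T_\beta$, so $\sigma_0$ maps each cylinder of $T_\alpha$ onto the corresponding cylinder of $T_\beta$ and preserves the integer $a$; consequently $\tilde T_\alpha = \tilde T_\beta \circ \sigma_0$ holds identically on every cylinder interior. The only obstruction to $\sigma_0$ being a self-map of $[0,1]$ is that the non-Markov boundary points $u = \alpha - 1, \alpha$ are not mapped to $u = \beta - 1, \beta$; a direct computation shows that $\sigma_0(0)$ and $1 - \sigma_0(1)$ are of order $\delta$, and more generally $|\sigma_0(x) - x|$ and $|1/\sigma_0'(x) - 1|$ are $O(\delta)$ uniformly on $[0,1]$.

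To produce an honest diffeomorphism I would fix a scale $\kappa = C\sqrt{\delta}$, keep $\sigma := \sigma_0$ on $[\kappa, 1 - \kappa]$, and interpolate $\sigma$ smoothly and monotonically between $0$ and $\sigma_0(\kappa)$ on $[0, \kappa]$, and between $\sigma_0(1 - \kappa)$ and $1$ on $[1 - \kappa, 1]$. Taking $A := [\kappa, 1 - \kappa]$ gives $m(A^c) = 2\kappa = O(\sqrt{\delta})$, and on $A$ the conjugacy $\tilde T_\alpha = \tilde T_\beta \circ \sigma$ holds by the cylinder-preservation step. On each boundary strip the chord slope equals $\sigma_0(\kappa)/\kappa = 1 + O(\delta/\kappa) = 1 + O(\sqrt{\delta})$, so both $|\sigma - \mathrm{id}|$ and $|1/\sigma' - 1|$ remain $O(\sqrt{\delta})$ there. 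All three quantities controlling the Keller distance are therefore $O(\sqrt{\delta})$, giving the claim. The exponent $1/2$ is forced by the trade-off between making the endpoint correction region small (to control $m(A^c)$) and wide enough to smear out the $O(\delta)$ endpoint mismatch (to control $|1/\sigma' - 1|$), with optimum $\kappa \sim \sqrt{\delta}$. The main technical nuisances I anticipate are making the piecewise definition of $\sigma$ genuinely $C^1$ at the junction points, and ensuring uniformity in $\alpha \in U$ — for the latter, assuming $j_{\min}$ constant on $U$ prevents a whole new branch from entering or leaving as $\alpha$ varies.
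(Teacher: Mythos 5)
Your proposal is correct and is essentially the paper's own argument: the same M\"obius change of variable $v(u)=u/(1+(\beta-\alpha)|u|)$ (your two sign cases agree with this single formula), the same $O(|\alpha-\beta|)$ bounds on $|\sigma_0-\mathrm{id}|$ and $|1/\sigma_0'-1|$, and the same endpoint correction on strips of width $\sim|\alpha-\beta|^{1/2}$, which is exactly where the exponent $1/2$ comes from (the paper uses linear bridges subsequently smoothed, you interpolate smoothly directly -- a cosmetic difference). The extra assumption that $j_{\min}$ be constant on $U$ is unnecessary, since the digit identity $a_\beta(v(u))=a_\alpha(u)$ holds for all branches at once, but it does no harm.
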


\begin{proof}
Having fixed $\alpha, \beta$, let us define
$$y(x) := \frac{x+\alpha-1}{1+(\beta-\alpha)|x+\alpha-1|} + 1 - \beta$$
It is immediate to verify that $\tilde{T}_{\alpha}(x) = \tilde{T}_{\beta}(y(x)) \  \forall x \in [0,1]$ and $y'(x) = \frac{1}{(1+(\beta-\alpha)|x+\alpha-1|)^2}$
We also have 
$$\sup_{x \in [0,1]} |y(x) - x| = |y(1) - 1|$$
because when $|\alpha -\beta|$ is small we have that, for $\alpha > \beta$,  $y(x) - x$ has positive derivative and $y(0) > 0$, while, for $\alpha < \beta$, $y(x) -x$ has negative derivative and $y(0) < 0$.
Thus, for $|\alpha - \beta|$ sufficiently small, 
$$ \sup_{x \in [0,1]} |y(x) - x| = |\alpha - \beta|\left| \frac{1+ \alpha \beta}{1+ (\beta-\alpha)\alpha}\right| \leq 2 |\alpha - \beta|$$
$$ \sup_{x \in [0,1]} \left| \frac{1}{y'(x)}-1 \right| = |\beta-\alpha| \sup_{x \in [0,1]} \left|2 |x+\alpha-1| + (\beta-\alpha)|x+\alpha-1|^2\right| \leq 3 |\alpha - \beta|$$
In order to compute the Keller distance we need to find a diffeomorphism $\sigma$ of the interval such that $\tilde{T}_{\alpha} = \tilde{T}_{\beta} \circ \sigma$ on a set of large measure; the $y$ defined so far is not a diffeo, so it is necessary to modify it a bit at the endpoints and we will do it by introducing two little linear bridges. Let $\delta$ be such that $\delta^2 = \sup_{x \in [0,1]}|y(x) - x | \leq 2 |\alpha -\beta|$;  we can define
$$ \sigma(x) = \left\{ \begin{array}{ll} \frac{y(\delta)}{\delta} x &  \textrm{for }x \leq \delta \\ y(x) & \textrm{for }\delta \leq x \leq 1-\delta \\
\frac{1-y(1-\delta)}{\delta}(x-1+\delta) + y(1-\delta) & \textrm{for }x \geq 1-\delta \end{array} \right. $$
For the sup norm we have 
$$\sup_{x \in [0,1]} |\sigma(x) - x| \leq \max \left\{ |y(\delta) - \delta|, \sup_{x \in [\delta, 1-\delta]}  |y(x)-x|,  |y(1-\delta) -1+\delta| \right\} \leq $$
$$\leq \sup_{x \in[0,1]} |y(x) - x| \leq 2 |\beta - \alpha|$$
Since $|y(\delta)| \geq \delta - |y(\delta) - \delta| \geq \delta - \delta^2$, one gets
$ \sup_{x \in[0,\delta]}\left| \frac{1}{\sigma'(x)}-1 \right| \leq \frac{\delta}{1-\delta} $ and 
$$\sup_{x \in [0,1]} \left| \frac{1}{\sigma'(x)}-1\right| \leq \max \left\{ 3|\alpha-\beta|, \frac{\delta}{1-\delta} \right\} \leq C |\alpha - \beta|^{1/2} $$


Now, $\sigma$ is a homeomorphism of $[0,1]$ with well defined, non-zero derivative except for the points $x = \delta, 1-\delta$. 
Hence one can construct smooth approximations $\sigma_n$ of $\sigma$ which coincide with it except on 
$[\delta-\frac{1}{2^n}, \delta+\frac{1}{2^n}] \cup [1-\delta -\frac{1}{2^n}, 1 - \delta+\frac{1}{2^n}]$ and such that previous estimates still 
hold. These $\sigma_n$ will be diffeomorphisms of the interval s.t.
$\tilde{T}_\alpha(x) = \tilde{T}_{\beta}(\sigma_n(x))$ for $x \in [\delta+ \frac{1}{2^n}, 1-\delta -\frac{1}{2^n}]$. 
Since $\sup m([\delta+ \frac{1}{2^n}, 1-\delta -\frac{1}{2^n}]) = 1 - 2\delta \geq 1 - 2 |\alpha - \beta|^{1/2}$, then
the claim is proven.
\end{proof}

\subsection{H\"older-continuity of entropy} \label{Holdercont}

By using the perturbation theory developed so far, we complete the proof that the function $\alpha \mapsto h(T_\alpha)$ is 
locally H\"older-continuous. Note that the uniform Lasota-Yorke inequality proven in section \ref{LYsect} would already imply
 continuity by the methods in \cite{LuzziMarmi}, while here we get a quantitative bound on the continuity module.

\begin{proposition}
Let $\delta > 0$, and $0 < s < \frac{1}{2}$. Then there exists a constant $C > 0$ such that 
$$| h(T_{\alpha})-h(T_{\beta}) | \leq C |\alpha - \beta|^s \qquad \forall \alpha, \beta \in [\delta, 1]$$ 
\end{proposition}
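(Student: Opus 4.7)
The strategy is to apply the Keller-Liverani perturbation theorem (Theorem \ref{KellerLiv}) to obtain H\"older continuity of the invariant densities in $L^1$-norm, then transfer this to the entropy via Rohlin's formula \eqref{Rohlin}.

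Work with the translated maps $\tilde T_\alpha$ from Lemma \ref{Kellerdist} so that everything acts on the fixed Banach space $BV([0,1])$ (Proposition \ref{LY} carries over unchanged under translation). Apply Keller-Liverani with weak norm $|\cdot|=\|\cdot\|_{L^1}$ and strong norm $\|\cdot\|=\|\cdot\|_{BV}$: hypothesis (1) holds with $M=1$ by $L^1$-contractivity, (2) is Proposition \ref{LY}, (3) is Lemma \ref{henn}, and (4) follows by combining Lemmas \ref{Kellerlemma} and \ref{Kellerdist}, giving $\tau(\epsilon)\leq C\epsilon^{1/2}$. Since $M=1$, one may take $r$ arbitrarily close to $1$, making $\eta=\log(\lambda/r)/\log\lambda$ arbitrarily close to $1$. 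Expressing the rank-one projector $\Pi_\alpha$ as a contour integral of $(z-\tilde\Phi_\alpha)^{-1}$ over a small fixed circle around the simple eigenvalue $z=1$ (which, for $\delta$ small, lies entirely outside $V_{\delta,r}$) and using $\tilde\rho_\alpha=\Pi_\alpha(1)$, this yields
\begin{equation*}
\|\tilde\rho_\alpha-\tilde\rho_\beta\|_{L^1}\leq C_s\,|\alpha-\beta|^s \qquad \forall\,s<1/2.
\end{equation*}

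After the change of variable $y=x+\alpha-1$, Rohlin's formula reads $h(T_\alpha)=-2\int_0^1\log|x+\alpha-1|\,\tilde\rho_\alpha(x)\,dx$, so $h(T_\alpha)-h(T_\beta)=A+B$ with
\begin{equation*}
A=-2\!\int_0^1\!\log|x+\alpha-1|(\tilde\rho_\alpha-\tilde\rho_\beta)\,dx,\quad B=-2\!\int_0^1\!(\log|x+\alpha-1|-\log|x+\beta-1|)\tilde\rho_\beta\,dx.
\end{equation*}
The uniform Lasota-Yorke inequality applied to the fixed-point equation $\tilde\Phi_\alpha\tilde\rho_\alpha=\tilde\rho_\alpha$ yields a uniform $BV$- (hence $L^\infty$-) bound on $\tilde\rho_\alpha$. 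For $A$, split $[0,1]$ at scale $\epsilon$ around the singularity $1-\alpha$: the inner part is $O(\epsilon|\log\epsilon|)$ by the $L^\infty$-bound, the outer part $O(|\log\epsilon|\cdot|\alpha-\beta|^s)$; optimizing $\epsilon\sim|\alpha-\beta|^s$ gives $|A|\lesssim|\alpha-\beta|^s|\log|\alpha-\beta||$. For $B$, since $\partial_\alpha\log|x+\alpha-1|=(x+\alpha-1)^{-1}$, the mean value theorem bounds the integrand by $|\alpha-\beta|/\epsilon$ outside an $\epsilon$-neighborhood of $\{1-\alpha,1-\beta\}$ and by $O(|\log\epsilon|)$ inside, giving $|B|\lesssim|\alpha-\beta|/\epsilon+\epsilon|\log\epsilon|$; optimizing $\epsilon\sim|\alpha-\beta|^{1/2}$ yields $|B|\lesssim|\alpha-\beta|^{1/2}|\log|\alpha-\beta||^{1/2}$. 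Absorbing the logarithmic factors into a marginally smaller exponent completes the proof for any H\"older exponent strictly less than $1/2$.

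The main obstacle is that Keller-Liverani only controls $\Pi_\alpha$ in the weak operator norm $BV\to L^1$, not in $BV\to BV$ (consistent with the non-BV-continuity stated in Remark \ref{rmk}). Since $\log|T_\alpha'|$ is unbounded and its singularity moves with $\alpha$, the $L^1$-convergence of $\tilde\rho_\alpha$ cannot be paired naively with an $L^\infty$-bound on the integrand; the cut-off scale must balance the Keller-Liverani exponent against the logarithmic blow-up of the observable, and this balance is precisely what caps the final H\"older exponent at values strictly below $1/2$.
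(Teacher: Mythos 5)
Your proposal is correct and follows essentially the same route as the paper: the Keller--Liverani theorem, fed with the uniform Lasota--Yorke inequality (Prop. \ref{LY}), Lemma \ref{henn} and the Keller-distance estimate (Lemmas \ref{Kellerlemma}, \ref{Kellerdist}), gives $L^1$-H\"older continuity of $\tilde{\rho}_\alpha$ with exponent arbitrarily close to $\frac{1}{2}$, and Rohlin's formula together with the uniform $BV$ (hence $L^\infty$) bound on the densities transfers this to $h(T_\alpha)$. The only difference is in the last integral estimate, where you split at a cutoff scale around the moving singularity of the logarithm and optimize, while the paper pairs the $L^\infty$ bound with an $L^1$ estimate of the difference of logarithms and uses H\"older's inequality with $L^p$-interpolation of $\Vert \tilde{\rho}_\alpha - \tilde{\rho}_\beta \Vert$; both variants yield every exponent $s < \frac{1}{2}$.
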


\begin{proof} 
Let us fix $\eta \in (0,1)$, and choose $r$ such that $\eta = \frac{\log(\lambda/r)}{\log(\lambda)}$. 
By thm \ref{KellerLiv} applied to the family $\Phi_\alpha$, (using proposition \ref{LY}, lemma \ref{henn} and lemma \ref{Kellerdist} as hypotheses),
for each $\alpha \in (0,1)$ there exist $\epsilon, C_1 > 0$ such that 
$$\vert \! \vert \! \vert \Pi_{\alpha} - \Pi_{\beta} \vert \! \vert \! \vert \leq C_1 |\alpha-\beta|^{\eta/2} \qquad \forall \beta \in (\alpha-\epsilon, \alpha + \epsilon)$$

Now, in theorem \ref{KellerLiv} the bounds ($H, K$) depend only on the constants $C_1$, $C_2$, $C_3$, $\lambda$, $M$, 
and in proposition \ref{LY} and lemma \ref{Kellerdist} these constants are locally uniformly bounded in $\alpha$, hence 
the following stronger statement is true: for each $\underline{\alpha} \in (0,1)$ there is $C_1 >0$ and some neighbourhood $U$ of $\underline{\alpha}$ such that 
$$\vert \! \vert \! \vert \Pi_{\alpha} - \Pi_{\beta} \vert \! \vert \! \vert \leq C_1 |\alpha-\beta|^{\eta/2} \qquad \forall \alpha, \beta \in U$$
Since $\tilde{\rho_{\alpha}} = \Pi_{\alpha}(1)$, the previous equation implies  
$$\Vert \tilde{\rho}_{\alpha} - \tilde{\rho}_{\beta} \Vert_{L^1} = O(\left|\alpha-\beta\right|^{\frac{\eta}{2}})$$ 
By prop. \ref{LY}, $\Vert \tilde{\rho}_{\alpha} \Vert_{BV}$ is locally bounded, hence so is $\Vert \tilde{\rho}_\alpha \Vert_{\infty}$ and for any $p > 1$
$$\Vert \tilde{\rho}_{\alpha} - \tilde{\rho}_{\beta} \Vert_{L^p} = O(\left|\alpha-\beta\right|^{\frac{\eta}{2p}})$$ 
By Rohlin's formula, $h(T_\alpha) = -2 \int_0^1 \log|y+\alpha-1| \tilde{\rho}_\alpha(y)dy$, thus 
$$|h(T_{\alpha}) - h(T_{\beta})| \leq 2\int_{0}^1 \left|\log |y+\alpha-1| \tilde{\rho}_{\alpha}(y) - \log |y + \beta -1| \tilde{\rho}_{\beta}(y)\right|dy \leq $$
by separating the product and applying H\"older's inequality, for any $p > 1$
\begin{small}
$$ \leq 2 \Vert \tilde{\rho}_\alpha \Vert_{\infty} \Vert \log |y+\alpha-1| - \log|y+\beta-1|\Vert_{L^1} + \Vert 2 \log(y+\beta -1)
 \Vert_{L^{p/p-1}} \Vert \tilde{\rho}_{\alpha} - \tilde{\rho}_{\beta} \Vert_{L^p} $$
\end{small}
Now, basic calculus shows $\Vert \log |y+\alpha-1| - \log|y+\beta-1|\Vert_{L^1} = O(-|\alpha- \beta| \log |\alpha-\beta|)$
and $\Vert 2 \log(y + \alpha - 1) \Vert_{L^{p/p-1}}$ is bounded independently of $\alpha$. 
Since this is true $\forall \eta < 1$ and $\forall p > 1$, the claim follows.
\end{proof}

\begin{remark} \label{rmk}
One has to be careful with the norm he uses to get the convergence, because while $L^1$-convergence of the densities is assured by uniform Lasota-Yorke, the invariant densities in general DO NOT converge to each other in $BV$-norm. For example we have for $\alpha \geq \frac{\sqrt{5}-1}{2}$
$$ \rho_\alpha(x) = \frac{1}{\log(1+\alpha)} \left(\chi_{[0, \frac{1-\alpha^2}{\alpha}]}(x)\frac{1}{x+2} + \chi_{(\frac{1-\alpha^2}{\alpha}, 1]}(x) \frac{1}{x+1} \right) $$
so
\begin{small}
$$\var_{[0,1]}(\tilde{\rho}_\alpha - \tilde{\rho}_{\underline{\alpha}}) \geq \left| \lim_{x \rightarrow \left(\frac{1-\alpha^2}{\alpha}\right)^{-}} (\rho_\alpha - \rho_{\underline{\alpha}}) - \lim_{x \rightarrow \left(\frac{1-\alpha^2}{\alpha}\right)^{+}} (\rho_\alpha - \rho_{\underline{\alpha}}) \right|$$
\end{small}
which does not converge to $0$ as $\alpha \rightarrow \underline{\alpha}$.

\end{remark}

\section{Central limit theorems} \label{CLTsect}

The goal of this section is to prove a central limit theorem (CLT) for the systems $T_{\alpha}$.
Given an observable $f : I_\alpha \rightarrow \mathbb{R}$, we denote by $S_nf$ the \emph{Birkhoff sum}
$$S_n f = \sum_{j = 0}^{n-1} f \circ T_{\alpha}^j$$
The function $x \mapsto \frac{S_n f(x)}{n}$ is called \emph{Birkhoff average} and it can be seen as a random variable 
on the space $I_\alpha = [\alpha-1, \alpha]$ endowed with the measure $\mu_{\alpha}$. 
By ergodicity, this random  variable converges a.e. to a constant. 
Our goal is to prove that the difference from such limit value converges in law to a Gaussian distribution. 

Heuristically, this means the sequence of observables $\{f \circ T_\alpha^n\}$ 
(which can be seen as  identically distributed random variables on $I_\alpha$) behave as if they were independent, i.e. the system has 
little memory of its past. A convergence property of this type is also useful to confirm numerical data, since it implies the variance of 
Birkhoff averages up to the $n^{th}$ iterate decays as $\frac{1}{\sqrt{n}}$, hence one can get a good approximation of the limit 
value by computing Birkhoff averages up to a suitable finite time $n$ (see \cite{LuzziMarmi}).  

First (subsection \ref{CLTBVsect}), we will prove CLT for observables of bounded variation. 
A particularly important observable is $\log|T_\alpha'|$, because by Rohlin's formula its expectation is the metric entropy.
Such observable, however, is not of bounded variation: in subsection \ref{CLTunbsect}, we will enlarge the class of observables we work with in order to encompass certain 
unbounded functions, including $\log|T_\alpha'|$. In order to do so, we need to define \emph{ad hoc} Banach spaces. 

\subsection{CLT for functions of bounded variation} \label{CLTBVsect}


\begin{theorem} \label{CLT}
Let $\alpha \in (0,1]$ and $f$ be a real-valued nonconstant element of $BV(I_{\alpha})$.  There exists $\sigma > 0$ such that the random variable
$\frac{S_n (f -\int f d\mu_{\alpha})}{\sigma \sqrt{n}}$ converges in law to a Gaussian $\mathcal{N}(0,1)$, i.e. for every $v \in \mathbb{R}$ we have 
$$ \lim_{n \rightarrow \infty} \mu_{\alpha}\left(\frac{S_n f - n \int_I f d\mu_{\alpha}}{\sigma \sqrt{n}} \leq v \right) = \frac{1}{\sqrt{2 \pi}}\int_{-\infty}^v e^{-x^2/2}dx$$
\end{theorem}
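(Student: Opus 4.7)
After replacing $f$ by $f-\int f\,d\mu_\alpha$ I may assume $\int f\,d\mu_\alpha=0$; the goal is then to show that the characteristic function of $S_n f/\sqrt{n}$ under $\mu_\alpha$ tends to $e^{-\sigma^2 t^2/2}$. I would use the Nagaev--Guivarc'h perturbative spectral method, which upgrades the quasi-compact spectral decomposition of Theorem~\ref{spectraldec} to a CLT. The key device is the twisted transfer operator
\[
\Phi_{\alpha,t}(g):=\Phi_\alpha\bigl(e^{itf}g\bigr),\qquad t\in\mathbb{R},
\]
which by iteration of the duality defining $\Phi_\alpha$ satisfies the fundamental identity
\[
\int \Phi_{\alpha,t}^{\,n}(\rho_\alpha)\,dx=\int e^{itS_nf}\,d\mu_\alpha.
\]

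Since $f\in BV(I_\alpha)$, one checks that $e^{itf}\in BV(I_\alpha)$ with $\|e^{itf}-1\|_{BV}=O(|t|)$, so $t\mapsto \Phi_{\alpha,t}$ is a real-analytic family of bounded operators on $BV(I_\alpha)$ with $\Phi_{\alpha,0}=\Phi_\alpha$. Applying Kato's analytic perturbation theory to the spectral decomposition of Theorem~\ref{spectraldec}, for $|t|$ small the operator $\Phi_{\alpha,t}$ has a unique isolated eigenvalue $\lambda(t)$ near $1$ with rank-one spectral projector $\Pi_t$, both depending analytically on $t$, while the rest of the spectrum stays strictly inside a disk of radius $<1$ uniformly in $t$. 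A direct computation then gives $\lambda(0)=1$, $\lambda'(0)=i\int f\,d\mu_\alpha=0$, and
\[
\lambda''(0)=-\sigma^2,\qquad \sigma^2:=\int f^2\,d\mu_\alpha+2\sum_{n\geq 1}\int f\cdot(f\circ T_\alpha^n)\,d\mu_\alpha,
\]
the series converging absolutely by the exponential decay of correlations proved at the end of Section~2.

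Combining the fundamental identity with the spectral decomposition $\Phi_{\alpha,t}^{\,n}=\lambda(t)^n\Pi_t+N_t^{\,n}$ (with $\rho(N_t)<r<1$ uniformly for small $t$), one obtains
\[
\mathbb{E}_{\mu_\alpha}\!\left[e^{i(s/\sqrt n)S_nf}\right]=\lambda(s/\sqrt n)^n\!\int\!\Pi_{s/\sqrt n}(\rho_\alpha)\,dx+O(r^n).
\]
Since $\lambda(s/\sqrt n)^n\to e^{-\sigma^2 s^2/2}$ and $t\mapsto\int\Pi_t\rho_\alpha\,dx$ is continuous at $0$ with value $1$, L\'evy's continuity theorem yields convergence in law to $\mathcal N(0,\sigma^2)$, which after rescaling by $\sigma$ is the desired statement. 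The main obstacle is the non-degeneracy $\sigma>0$: by Leonov's criterion $\sigma=0$ forces $f$ to be an $L^2(\mu_\alpha)$-coboundary $f=u\circ T_\alpha-u$, and one must rule this out for nonconstant $f\in BV(I_\alpha)$. The cleanest route is a Livsic-type rigidity argument: if $f$ were such a coboundary then $S_pf$ would vanish on every $p$-periodic orbit of $T_\alpha$, and the abundance of periodic orbits coming from the Markov/expanding structure together with the BV regularity of $f$ forces $f$ to be constant, a contradiction.
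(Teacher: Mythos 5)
The perturbative part of your plan (twisted operator $\Phi_{\alpha,t}(g)=\Phi_\alpha(e^{itf}g)$, analytic perturbation of the simple eigenvalue $1$ from Theorem \ref{spectraldec}, $\lambda'(0)=0$, $\lambda''(0)=-\sigma^2$, and L\'evy's continuity theorem) is essentially the route the paper takes, following Broise; that portion is fine. The genuine gap is in the non-degeneracy step $\sigma>0$, which is precisely where the paper has to do real work specific to $T_\alpha$. First, the coboundary criterion only hands you $f=u\circ T_\alpha-u$ $\mu_\alpha$-a.e.\ with $u\in L^2(\mu_\alpha)$ (plus, in Broise's version, the extra regularity $u\rho_\alpha\in BV(I_\alpha)$, which you do not use). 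An identity that holds only almost everywhere cannot be evaluated along periodic orbits, which form a set of measure zero; to make your Livsic-type step legitimate you would first have to upgrade the regularity of $u$, and that upgrade is exactly the content you are skipping. Second, and more fundamentally, even if you knew $S_pf=0$ on every periodic orbit, that does not force $f$ to be constant: vanishing of the periodic-orbit obstructions is the hallmark of a coboundary, and nonconstant coboundaries have vanishing obstructions. Since you are assuming $f$ is a coboundary for contradiction, no contradiction can come out of the periodic-orbit data alone; your final sentence asserts a rigidity statement ("BV regularity of $f$ forces $f$ to be constant") that is false as stated for general expanding maps.

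What actually rules out the coboundary equation here is the structure of $T_\alpha$ itself: it has infinitely many full branches. The paper's argument runs as follows. By Zweim\"uller's theorem, $\rho_\alpha$ is bounded below on its support, so $\tfrac{1}{\rho_\alpha}\chi_{\{\rho_\alpha\neq 0\}}\in BV$; combined with $u\rho_\alpha\in BV$ this gives $u\in BV(I_\alpha)$, hence $u\circ T_\alpha=u-f+\int f\,d\mu_\alpha\in BV(I_\alpha)$ as well. But on each full cylinder $I_j$ the branch is a homeomorphism onto $I_\alpha$, so $\var_{I_j}(u\circ T_\alpha)=\var_{(\alpha-1,\alpha)}u$, and summing over the infinitely many full cylinders shows $\var(u\circ T_\alpha)=\infty$ unless $\var u=0$, i.e.\ $u$ is constant a.e., whence $f$ is constant --- contradiction. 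If you want to salvage your write-up, replace the Livsic paragraph by this regularity-plus-infinite-branches argument (or prove an upgrade of $u$ to BV by your own means); as it stands, the key hypothesis "$f$ nonconstant $\Rightarrow\sigma>0$" is not established.
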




The proof of theorem \ref{CLT} follows a method developed by A. Broise \cite{Broise}.

\medskip
\noindent \textbf{Perturbations of $\Phi_{\alpha}$}

Given $f \in BV(I_\alpha)$ with real values and given $\theta \in \mathbb{C}$, let us define the operator $\Phi_f(\theta): BV(I_\alpha) \rightarrow BV(I_\alpha)$ with
$$\Phi_f(\theta)(g) = \Phi(\textup{exp}(\theta f)g)$$
For $f$ fixed, this family of operators has the property that $\Phi_f(0) = \Phi$ and the function $\theta \rightarrow \Phi_f(\theta)$ is analytic; the interest in this kind of perturbations resides in the identity
$$\Phi_f^n(\theta)(g) = \Phi^n(\textup{exp}(\theta S_n f)g)  \qquad \textup{with }S_nf = \sum_{k = 0}^{n-1} f \circ T_{\alpha}^k$$ 
Since in our case all eigenvalues of modulus $1$ are simple, the spectral decomposition transfers to the perturbed operator:

$$ \Phi^n_f(\theta)(g) = \lambda^n_0(\theta)\Phi_0(\theta)(g) + \Psi^n_f(\theta)(g) $$
where the functions $\theta \mapsto \Phi_0(\theta)$, $\theta \mapsto \lambda_0(\theta)$ and $\theta \mapsto \Psi_f(\theta)$ are analytic in a neighbourhood of $\theta = 0$. Moreover, 
$\rho(\Psi_f(\theta)) \leq \frac{2 + \rho(\Psi)}{3} \leq |\lambda_0(\theta)|$.

\vskip 0.5 cm

\noindent Let us now consider the variance of $S_n f$:

\begin{proposition}[\cite{Broise}, thm. 6.1] \label{variance}
Let $\alpha \in (0,1]$ and $f$ be a real-valued element of $BV(I_{\alpha})$.
Then the sequence 
$$M_n = \int_{I_\alpha}\left(\frac{S_n f- n \int f d\mu_{\alpha}}{\sqrt{n}}\right)^2 d\mu_{\alpha}$$ converges to a real nonnegative value, which will be denoted by $\sigma^2$. Moreover, $\sigma^2 = 0$ if and only if there exists 
$u \in L^2(\mu_{\alpha})$ such that $u\rho_{\alpha} \in BV(I_{\alpha})$ and
\begin{equation} \label{cohom} 
f - \int_{I_\alpha} f d\mu_{\alpha} = u - u \circ T_{\alpha} 
\end{equation}
\end{proposition}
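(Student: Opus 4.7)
The plan is to leverage the spectral decomposition $\Phi_\alpha = \Pi_\alpha + \Psi_\alpha$ from Theorem~\ref{spectraldec} and the exponential decay of correlations it implies. After centering $f$ so that $\int f\, d\mu_\alpha = 0$, I expand $(S_n f)^2$ as a double sum and integrate against $\mu_\alpha$ to obtain
\[
M_n = \int_{I_\alpha} f^2\, d\mu_\alpha + 2\sum_{k=1}^{n-1}\Bigl(1 - \tfrac{k}{n}\Bigr) C_k, \qquad C_k := \int_{I_\alpha} f\cdot (f\circ T_\alpha^k)\, d\mu_\alpha.
\]
The bound $|C_k|\leq A\lambda^k$ with $\lambda<1$, supplied by the exponential mixing proposition in Section~\ref{spectral}, makes both series converge absolutely and renders the Cesaro weights harmless, so $M_n \to \sigma^2 := \int f^2\, d\mu_\alpha + 2\sum_{k\geq 1} C_k$, which is nonnegative since each $M_n$ is.

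To study the vanishing of $\sigma^2$ I introduce the dual operator $L_\alpha g := \Phi_\alpha(g\rho_\alpha)/\rho_\alpha$, which is the $L^2(\mu_\alpha)$-adjoint of the Koopman operator $U_{T_\alpha}$. Since $\int f\rho_\alpha\, dx = 0$ places $f\rho_\alpha$ in $\ker \Pi_\alpha$, the series $U := \sum_{k\geq 0}\Phi_\alpha^k(f\rho_\alpha)$ converges in $BV(I_\alpha)$ by the geometric decay of $\Psi_\alpha^k$, and $h := U/\rho_\alpha$ satisfies the coboundary identity $h - L_\alpha h = f$. Using the duality $C_k = \int \Phi_\alpha^k(f\rho_\alpha)\, f\, dx$, a short manipulation recasts the asymptotic variance in the clean form
\[
\sigma^2 = \|h\|_{L^2(\mu_\alpha)}^2 - \|L_\alpha h\|_{L^2(\mu_\alpha)}^2.
\]
The forward direction of the characterization is now immediate: if $f = u - u\circ T_\alpha$ with $u \in L^2(\mu_\alpha)$, then Birkhoff sums telescope to $S_n f = u - u\circ T_\alpha^n$, so $T_\alpha$-invariance of $\mu_\alpha$ yields $\|S_n f\|_{L^2(\mu_\alpha)} \leq 2\|u\|_{L^2(\mu_\alpha)}$, hence $M_n \to 0$.

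The converse is the main obstacle, and is where the spectral form of $\sigma^2$ pays off. Since $\mu_\alpha$ is $T_\alpha$-invariant, $U_{T_\alpha}$ is an isometry on $L^2(\mu_\alpha)$, so $L_\alpha = U_{T_\alpha}^*$ is a contraction and $U_{T_\alpha} L_\alpha$ is the orthogonal projection onto $\mathrm{Ran}(U_{T_\alpha})$. The assumption $\sigma^2 = 0$ forces $\|L_\alpha h\|_2 = \|h\|_2$; writing $\|L_\alpha h\|_2^2 = \langle U_{T_\alpha} L_\alpha h,\, h\rangle$ and invoking the equality case of Cauchy-Schwarz, I conclude $U_{T_\alpha} L_\alpha h = h$, hence $h = v\circ T_\alpha$ with $v := L_\alpha h \in L^2(\mu_\alpha)$. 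Setting $u := -v$, the coboundary equation $h - L_\alpha h = f$ rewrites as $u - u\circ T_\alpha = f$, and the regularity $u\rho_\alpha = -\Phi_\alpha(U) \in BV(I_\alpha)$ follows from $U \in BV$ and the invariance of $BV$ under $\Phi_\alpha$. The delicate step is the Cauchy-Schwarz equality argument, which relies crucially on $U_{T_\alpha}$ being a genuine isometry rather than merely a contraction.
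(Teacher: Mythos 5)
Your argument is correct, and it is worth noting that the paper does not actually prove this statement: it imports it wholesale as Theorem 6.1 of \cite{Broise}, so your write-up is a self-contained reconstruction of what is hidden behind that citation, using only ingredients already established in the paper (the decomposition $\Phi_\alpha=\Pi_\alpha+\Psi_\alpha$ of theorem \ref{spectraldec}, the resulting exponential decay of correlations, and the lower bound on $\rho_\alpha$ from \cite{Zweimuller} which you need implicitly when dividing by $\rho_\alpha$). Your route is the classical Gordin-type argument: sum the correlations to get $\sigma^2=\int f^2\,d\mu_\alpha+2\sum_{k\ge 1}C_k$, solve $h-L_\alpha h=f$ with $h=\bigl(\sum_{k\ge0}\Phi_\alpha^k(f\rho_\alpha)\bigr)/\rho_\alpha$ (legitimate because $\Pi_\alpha(f\rho_\alpha)=\rho_\alpha\int f\rho_\alpha\,dx=0$), and exploit the identity $\sigma^2=\Vert h\Vert_{L^2(\mu_\alpha)}^2-\Vert L_\alpha h\Vert_{L^2(\mu_\alpha)}^2$ together with the fact that the Koopman operator is an isometry; I checked the algebra $\sum_{k\ge1}\Phi_\alpha^k(f\rho_\alpha)=(L_\alpha h)\rho_\alpha$ and the resulting telescoping, and it is right, as is the equality-case argument (equivalently: $U_{T_\alpha}L_\alpha$ is the orthogonal projection onto the range of $U_{T_\alpha}$, so $\Vert L_\alpha h\Vert=\Vert h\Vert$ forces $U_{T_\alpha}L_\alpha h=h$). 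Broise's own proof, which the paper leans on and later re-uses in section \ref{CLTBVsect}, is organized instead around the analytic perturbation $\Phi_f(\theta)$ and the identification $\sigma^2=\lambda_0''(0)$; your approach avoids perturbation theory altogether at the price of being specific to the variance statement, while Broise's machinery is what the paper needs anyway for the CLT and for section \ref{last}. Two small points you should make explicit: the trivial case $h=0$ (then $f=0$ a.e.\ and $u=0$ works, and likewise treat $\Vert h\Vert\neq0$ before invoking the Cauchy--Schwarz equality case), and the fact that the cohomological identity and the membership $u\in L^2(\mu_\alpha)$, $u\rho_\alpha=-\Phi_\alpha(U)\in BV(I_\alpha)$ are all asserted $\mu_\alpha$-a.e., i.e.\ on $\{\rho_\alpha\neq0\}$ where Zweim\"uller's bound $\rho_\alpha\ge C_\alpha$ applies.
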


Now, if $\sigma > 0$, the method of (\cite{Broise}, chap. 6) yields the central limit theorem. The main steps in the argument are: 

\begin{enumerate}
\item
$\lambda_0'(0) = \int_{I_{\alpha}} f d\mu_{\alpha}$
\item
If $\int_{I_\alpha} f d\mu_{\alpha} = 0$, then $\lambda_0''(0) = \sigma^2$ 
\item
If $\int_{I_\alpha} f d\mu_{\alpha} = 0$, then $\lim_{n \rightarrow \infty} \int_{I_\alpha} \Phi_f^n(\frac{it}{\sigma \sqrt{n}})(\rho_\alpha)dm = \textup{exp}(-\frac{t^2}{2})$
\end{enumerate}
CLT then follows by L\'evy's continuity theorem, LHS in previous equation being the characteristic function of the random variable $\frac{S_n(f  - \int_{I_\alpha} f d\mu_{\alpha})}{\sigma \sqrt{n}}$.

In order to prove the CLT for a given observable we are now left with checking that equation (\ref{cohom}) has no solutions. The following proposition completes the proof of theorem \ref{CLT}.

\begin{proposition}
For every real-valued nonconstant $f \in BV(I_\alpha)$, equation (\ref{cohom}) has no solutions.
\end{proposition}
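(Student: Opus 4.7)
The plan is to argue by contradiction. Assume that $u \in L^2(\mu_\alpha)$ with $u\rho_\alpha \in BV(I_\alpha)$ satisfies $\phi := f - \bar f = u - u \circ T_\alpha$ a.e., where $\bar f := \int_{I_\alpha} f \, d\mu_\alpha$. The first step is to upgrade $u$ itself to a function of bounded variation: for $\alpha$-continued fractions the invariant density $\rho_\alpha$ is known to be bounded above and below by positive constants on $I_\alpha$, hence $u = (u\rho_\alpha)/\rho_\alpha$ lies in $BV(I_\alpha)$. In particular both $u$ and $\phi$ admit well-defined lateral limits at every point of $I_\alpha$, including a right limit at $0$.

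The key step is then to exploit the infinitely many full branches of $T_\alpha$ accumulating at $0$. For every $j \geq j_{min}+1$ the branch $I_j^+$ is mapped diffeomorphically onto $I_\alpha$ by $T_\alpha$, with inverse $\sigma_j^+(y) = \frac{1}{j+y}$. Substituting $x = \sigma_j^+(y)$ into the coboundary relation yields, for a.e.\ $y \in I_\alpha$,
$$ u(y) = u(\sigma_j^+(y)) - \phi(\sigma_j^+(y)). $$
Since each such identity holds off a null set, intersecting these countably many null sets produces a full-measure set $E \subseteq I_\alpha$ on which the identity holds simultaneously for every $j \geq j_{min}+1$. As $j \to \infty$ one has $\sigma_j^+(y) \to 0^+$; since the BV functions $u$ and $\phi$ have at most countably many discontinuities, after removing a further countable exceptional set of $y$ the sequence $\sigma_j^+(y)$ eventually avoids these discontinuities, and therefore $u(\sigma_j^+(y)) \to u(0^+)$ and $\phi(\sigma_j^+(y)) \to \phi(0^+)$. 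Consequently $u \equiv u(0^+) - \phi(0^+)$ a.e., which forces $\phi \equiv 0$ a.e., contradicting the assumption that $f$ is nonconstant.

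The main technical obstacle is the reduction to $u \in BV$, which relies on the uniform lower bound for $\rho_\alpha$; for $\alpha \geq \frac{\sqrt{5}-1}{2}$ this is immediate from the explicit density formula recalled in Remark \ref{rmk}, while for smaller $\alpha$ it follows from the analogous (more intricate) structure of the invariant densities. The remaining pointwise convergence step is only apparently delicate: because BV representatives agree off a countable set and the map $j \mapsto \sigma_j^+(y)$ is injective, the preimages of the discontinuity sets of $u$ and $\phi$ account for only countably many $y$, so the limiting argument goes through for a.e. $y \in E$.
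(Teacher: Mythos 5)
Your proof is correct, but its decisive step is genuinely different from the paper's. The first reduction coincides: use a positive lower bound for $\rho_\alpha$ to upgrade $u\rho_\alpha \in BV(I_\alpha)$ to $u \in BV(I_\alpha)$. Be aware, though, that your justification of that lower bound is the weakest point: explicit formulas for $\rho_\alpha$ are not available for general small $\alpha$, so appealing to ``the analogous (more intricate) structure of the invariant densities'' is not an argument; the paper instead invokes Zweim\"uller's theorem \cite{Zweimuller} (applicable thanks to proposition \ref{classeC}), which gives $\rho_\alpha \geq C_\alpha > 0$ on $\{\rho_\alpha \neq 0\}$, and you should cite that instead. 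From there the two proofs diverge. The paper stays at the level of total variation: since $u\circ T_\alpha \in BV(I_\alpha)$ and each full cylinder $I_j \in \mathcal{P}_1$ is mapped homeomorphically onto $I_\alpha$, one has $\var_{I_j}(u\circ T_\alpha) = \var_{(\alpha-1,\alpha)}u$ for infinitely many $j$, so finiteness of $\var(u\circ T_\alpha)$ forces $\var u = 0$. You argue pointwise instead: evaluating the coboundary identity (\ref{cohom}) along the full inverse branches $\sigma_j^+(y) = 1/(j+y)$, which accumulate at $0$, and using the existence of the one-sided limits $u(0^+)$, $\phi(0^+)$ of BV functions, you conclude $u(y) = u(0^+)-\phi(0^+)$ a.e., hence $u$ is constant a.e.\ and $f$ is constant, a contradiction. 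Both routes exploit the infinitude of full branches; yours is more elementary and pointwise (and would survive with any regularity class guaranteeing a one-sided limit at $0$), while the paper's variation count is shorter once $u\circ T_\alpha \in BV$ is in hand. A small simplification: your precaution about discontinuity points is unnecessary, since a BV function has a genuine one-sided limit at $0$, so $u(\sigma_j^+(y)) \to u(0^+)$ for every $y$ and the countable exceptional set need not be removed.
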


\begin{proof}
By proposition \ref{classeC}, $T_\alpha$ satisfies the hypotheses of a theorem of Zweim\"uller \cite{Zweimuller}, which asserts that there 
exists $C_\alpha > 0$ such that  $\rho_\alpha \geq C_\alpha$ on $\{\rho_\alpha \neq 0 \}$. Hence, the function
 $\frac{1}{\rho_{\alpha}} \chi_{\{\rho_{\alpha} \neq 0\}}$ belongs to $BV(I_\alpha)$, so if it exists $u$ such that 
$f\rho_\alpha -(\int_{I_\alpha} f d\mu_\alpha) \rho_\alpha = u\rho_\alpha - u \circ T_\alpha \cdot \rho_\alpha$ in $BV(I_\alpha)$, then
 we can multiply by $\frac{1}{\rho_\alpha} \chi_{\{\rho_\alpha \neq 0\}}$ and get $f - \int_{I_\alpha} f d\mu_\alpha = u - u \circ T_\alpha$ in
 $BV(I_\alpha)$, with $u$ in $BV(I_\alpha)$ because $u\rho_\alpha \in BV(I_\alpha)$; by knowing that $f \in BV(I_\alpha)$ we get 
$u \circ T_\alpha \in BV(I_\alpha)$. For each cylinder $I_j \in \mathcal{P}_1$, since $T_\alpha\mid_{I_j}: I_j \rightarrow I_\alpha$ is a homeomorphism, 
$$\var_{I_j} (u \circ T_\alpha) = \var_{T_\alpha(I_j)} u$$
hence 
$$ \var_{I_\alpha} (u \circ T_\alpha) \geq \sum_{\substack{I_j  \in \mathcal{P}_1 \\ I_j \textup{ full}}} \var_{I_j} (u \circ T_\alpha) 
\geq \sum_{\substack{I_j \in \mathcal{P}_1 \\ I_j \textup{ full}}} \var_{T_\alpha(I_j)} u = \sum_{\substack{I_j \in \mathcal{P}_1 \\ I_j \textup{ full}}} \var_{(\alpha - 1, \alpha)} u$$
and, since the set of $j$ s.t. $I_j$ is full is infinite, $u \circ T_\alpha$ has a representative 
with bounded variation only if $\var_{(\alpha-1, \alpha)} u = 0$, i.e. $u$ is constant a.e.
\end{proof}

\subsection{CLT for unbounded observables} \label{CLTunbsect}

In order to prove a central limit theorem for the entropy $h(T_\alpha)$ 
one has to consider the observable $x \mapsto \log|T_\alpha'(x)| = - 2 \log |x|$, which is not of bounded variation on intervals containing $0$.
 Therefore, one has to enlarge the space of functions to work with so that it contains such observable, and use some norm which still allows to 
bound the essential spectral radius of the transfer operator. Such technique will be developed in this section.

The strategy is to use the Ionescu-Tulcea and Marinescu theorem to get a spectral decomposition of the transfer operator, 
as we did in section \ref{spectral}. This theorem requires a pair of Banach spaces contained in each other such that the operator 
preserves both. Traditionally, this is achieved by considering the pair $BV(I) \subset L^1(I)$. In our case, we will replace the space 
of functions of bounded variation with newly-defined, larger spaces $B_{K, \delta} \subseteq L^1$, which allow for functions with a 
mild singularity in $0$.

\subsubsection{A new family of Banach spaces}

Fix $\alpha \in (0, 1]$. 
Given a positive integer $K$ and some $0 < \delta < 1$, let us define the $K, \delta$\emph{-norm} of a function 
$f : I_\alpha \rightarrow \mathbb{C}$ as
$$\Vert f \Vert_{K, \delta} := \sup_{k \geq K} \left( k^{-\delta} \var_{L_k} f + \int_{L_k} |f(x)| dx \right)$$
where the $L_k$ are a sequence of increasing subintervals of $I_\alpha$, namely 

$$L_k^+ := \overline{\bigcup_{j \geq k} I^+_j} = \left[ \frac{1}{k+\alpha}, \alpha \right] \quad L_k^- := \overline{\bigcup_{j \geq k} I^-_j} = \left[\alpha-1,  
-\frac{1}{k+\alpha}\right]$$
and $L_k := L_k^+ \cup L_k^-$, with $\var_{L_k} f := \var_{L_k^+} f + \var_{L_k^-} f$. 
Let us now define the space $B_{K, \delta}$ of \emph{functions of mild growth} as 
$$B_{K, \delta} := \{ f \in L^1 \ : \ f \textup{ has a version }g \textup{ with }\Vert g \Vert_{K, \delta} < \infty \}$$
Let us now establish some basic properties of these spaces. First of all, they are Banach spaces:

\begin{proposition} For every $K \in \mathbb{N}$, $0 < \delta < 1$, the space
$B_{K, \delta}$ endowed with the norm 
$$\Vert f \Vert_{K, \delta} := \inf \{ \Vert g \Vert_{K, \delta}, \ g = f \textup{ a.e.} \}$$
 is a Banach space.
\end{proposition}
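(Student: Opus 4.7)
The plan is the standard two-stage Banach-space argument, adapted to the seminorm $\sup_{k\geq K}(k^{-\delta}\var_{L_k}(\cdot) + \int_{L_k}|\cdot|\,dx)$. First I would verify the norm axioms (definiteness modulo null sets, homogeneity, triangle inequality); these reduce to the corresponding facts for $\var_{L_k}$ and the $L^1$ norm, plus the routine observation that the triangle inequality is preserved under taking an infimum over representatives. The key comparison for the rest is that, since $\int_{L_k}|g|\,dx \nearrow \int_{I_\alpha}|g|\,dx$ as $k\to \infty$ (the $L_k$ exhaust $I_\alpha\setminus\{0\}$), one has $\Vert g\Vert_{L^1(I_\alpha)} \leq \Vert g\Vert_{K,\delta}$, so convergence in $B_{K,\delta}$ is at least as strong as $L^1$-convergence.

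For completeness, I would start with a Cauchy sequence $\{f_n\} \subset B_{K,\delta}$ and extract a subsequence $\{f_{n_j}\}$ with $\Vert f_{n_{j+1}} - f_{n_j}\Vert_{K,\delta} < 2^{-j}$. Using the definition of the norm as an infimum, for each $j$ I would pick an honest representative $h_j$ of the a.e.-class $f_{n_{j+1}} - f_{n_j}$ with $\Vert h_j\Vert_{K,\delta} < 2^{-j+1}$, and a representative $g_1$ of $f_{n_1}$ with $\Vert g_1\Vert_{K,\delta}$ finite. The partial sums $g_j := g_1 + h_1 + \cdots + h_{j-1}$ are then representatives of $f_{n_j}$, and the estimate $\sum_j \Vert h_j\Vert_{L^1} \leq \sum_j \Vert h_j\Vert_{K,\delta} < \infty$ together with Tonelli's theorem implies $\sum_j |h_j(x)|<\infty$ almost everywhere. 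Consequently $g_j(x)$ converges pointwise a.e.\ to some measurable function $g$.

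The final step is to show $g \in B_{K,\delta}$ and $g_j \to g$ in the $K,\delta$-norm. Since $\var_{L_k}$ is lower semicontinuous with respect to pointwise convergence and Fatou's lemma handles the $L^1$ term, for each fixed $k \geq K$ one has
\[
k^{-\delta}\var_{L_k} g + \int_{L_k}|g|\,dx \leq \liminf_{j\to\infty}\Bigl(k^{-\delta}\var_{L_k} g_j + \int_{L_k}|g_j|\,dx\Bigr) \leq \liminf_{j\to\infty}\Vert g_j\Vert_{K,\delta}.
\]
Taking $\sup_k$ on the left and using the elementary inequality $\sup_k \liminf_j \leq \liminf_j \sup_k$ gives $\Vert g\Vert_{K,\delta} \leq \liminf_j\Vert g_j\Vert_{K,\delta} < \infty$, so $g \in B_{K,\delta}$. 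Applying the same lower-semicontinuity estimate to the differences $g_l - g_j$ (in $l$, with $j$ fixed), whose pointwise a.e.\ limit is $g - g_j$, yields
\[
\Vert g - g_j\Vert_{K,\delta} \leq \liminf_{l\to\infty}\Vert g_l - g_j\Vert_{K,\delta} \leq \sum_{i \geq j} 2^{-i+1} \xrightarrow[j\to\infty]{} 0,
\]
so the subsequence converges to $g$; the Cauchy property then upgrades this to convergence of the full sequence.

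The only real subtlety I anticipate is the bookkeeping with representatives: because the norm is an infimum over a.e.-classes, one cannot independently pick a near-optimal representative of each $f_{n_j}$ and hope they interact well, so the whole argument hinges on committing at the outset to the telescoping choice $g_j = g_1 + h_1 + \cdots + h_{j-1}$, which guarantees both pointwise convergence and control of $\Vert g_l - g_j\Vert_{K,\delta}$ by a genuine tail of $\sum 2^{-i+1}$.
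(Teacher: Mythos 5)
Your proof is correct, but it takes a genuinely different route from the paper's. The paper exploits the nested structure of the $L_k$ directly: for each fixed $k$ the $K,\delta$-Cauchy condition makes the restrictions $f_n\mid_{L_k}$ Cauchy in $BV(L_k)$, so by completeness of $BV(L_k)$ they converge to some $\overline{f}_k$; these limits are compatible under restriction, hence patch to a single function on $[\alpha-1,\alpha]\setminus\{0\}$, and letting $n\to\infty$ in the uniform-in-$k$ Cauchy estimate gives $\Vert f_m - f\Vert_{K,\delta}\leq\epsilon$. You instead run the classical fast-subsequence argument with telescoping representatives, building the limit from scratch via a.e.\ pointwise convergence, lower semicontinuity of $\var_{L_k}$ and Fatou, never invoking completeness of $BV$. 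The paper's version is shorter given that $BV(L_k)$-completeness is standard and it works entirely with equivalence classes, so no representative bookkeeping arises; yours is more self-contained and makes explicit exactly where the infimum-over-versions in the definition of the norm is needed. One small point you should spell out: the convergence $g_j\to g$ is only almost everywhere, so lower semicontinuity literally gives $\var_{L_k\cap E}\,g\leq\liminf_j\var_{L_k} g_j$ where $E$ is the full-measure set of convergence; since a function of bounded variation on the dense subset $L_k\cap E$ extends to $L_k$ (by one-sided limits along $E$) with the same variation, and the $K,\delta$-norm is an infimum over versions, this still yields $\Vert g\Vert_{K,\delta}\leq\liminf_j\Vert g_j\Vert_{K,\delta}$ and the analogous bound for $g-g_j$, but as written the inequality for the specific pointwise-defined $g$ needs this extra sentence.
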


\begin{proof}
This is obviously a normed vector space. Let us prove completeness. If $\{ f_n \}$ is a Cauchy sequence, then there exists for every $k$ a function $\overline{f}_k$ such that $f_n\mid_{L_k} \rightarrow \overline{f}_k$ in $BV(L_k)$-norm for $n \rightarrow \infty$. Also, by restricting $f_n\mid_{L_{k+1}} \rightarrow \overline{f}_{k+1}$ to $L_k$ one can conclude $\overline{f}_{k+1}\mid_{L_k} = \overline{f}_k$, hence one can define $f : [\alpha-1, \alpha]\setminus\{0\} \rightarrow \mathbb{C}$ s.t. $f\mid_{L_k} = \overline{f}_k$. Now, $\forall \epsilon > 0$ $\exists N$ $\forall m,n\geq N$ $\forall k \geq K$
$$\Vert f_m - f_n \Vert_{L^1(L_k)} + k^{-\delta} \var_{L_k}(f_m - f_n) \leq \epsilon$$
and by taking the limit for $n \rightarrow \infty$ one has $\Vert f_m - f \Vert_{K, \delta} \leq \epsilon$.
\end{proof}
Note that $\Vert f \Vert_{L^1(I_\alpha)} \leq \Vert f \Vert_{K, \delta}$, and $B_{K, \delta}$ is a $BV$-module, i.e. 
$$f \in B_{K, \delta}, g \in BV \Rightarrow fg \in B_{K, \delta}$$
Another useful property of these spaces is the following:

\begin{lemma} \label{powergrowth}
For $K > \max \left\{\frac{1}{\alpha} ,\frac{1}{1-\alpha} \right\}
$ $(K \geq 1$ if $\alpha = 1)$, $0 < \delta < 1$, $\exists A >0$ s.t. $\forall f\in B_{K. \delta}$
$$|f(x)| \leq  \frac{A}{|x|^\delta} \Vert f \Vert_{K, \delta} \qquad \forall x \in [\alpha-1, \alpha]\setminus\{ 0\}$$ 
\end{lemma}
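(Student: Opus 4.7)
The plan is to reduce the statement to the standard $L^\infty$-bound for $BV$ functions on an interval, applied to the connected component of $L_k^\pm$ containing $x$, with $k$ chosen as a function of $|x|$.

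Fix $x \in I_\alpha \setminus \{0\}$; assume first $x > 0$ (the case $x < 0$ is symmetric, using $L_k^-$, and collapses away entirely when $\alpha = 1$). For any $k \geq K$ such that $x \in L_k^+$, I would use the minimal $BV$ representative and the pointwise bound $|f(x) - f(y)| \leq \var_{L_k^+} f$ valid for every $y \in L_k^+$; averaging over $y$ yields
$$|f(x)| \leq \frac{1}{m(L_k^+)} \int_{L_k^+} |f(y)|\,dy + \var_{L_k^+} f \leq \left( \frac{1}{m(L_k^+)} + k^\delta \right) \Vert f \Vert_{K, \delta}$$
directly from the definition of the $K,\delta$-norm.

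The hypothesis $K > 1/\alpha$ (resp.\ $K > 1/(1-\alpha)$ in the negative case) is exactly what guarantees $m(L_K^+) = \alpha - 1/(K+\alpha) > 0$, so that $1/m(L_k^+) \leq 1/m(L_K^+) =: C_1$ uniformly for $k \geq K$. The final step is to choose $k := \max\{K,\, \lceil 1/|x| \rceil\}$, which ensures $x \in L_k^+$ and satisfies $k \leq K + 1/|x|$. Since $0 < \delta < 1$, subadditivity gives $k^\delta \leq K^\delta + |x|^{-\delta}$, and since $|x| \leq \max(\alpha, 1-\alpha) \leq 1$ implies $|x|^{-\delta} \geq 1$, one can absorb the bounded terms into $|x|^{-\delta}$ and conclude
$$|f(x)| \leq \bigl( C_1 + K^\delta + |x|^{-\delta} \bigr)\, \Vert f \Vert_{K, \delta} \leq \frac{A}{|x|^\delta}\, \Vert f \Vert_{K, \delta}$$
with $A := C_1 + K^\delta + 1$ (or any suitably large constant depending only on $\alpha, K, \delta$).

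There is no serious obstacle here; the proof is a routine $BV$ argument. The only point that requires a small amount of care is the choice of $k$: taking the naive $k = \lceil 1/|x|\rceil$ fails for $|x|$ close to the endpoint of $I_\alpha$, and replacing it by $\max\{K, \lceil 1/|x|\rceil\}$ handles both regimes simultaneously, while the condition on $K$ is precisely what keeps $m(L_k^\pm)$ bounded away from zero.
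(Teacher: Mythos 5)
Your proof is correct and follows essentially the same route as the paper: both choose $k$ comparable to $1/|x|$ so that $x \in L_k^\pm$, bound the variation term by $k^\delta \Vert f \Vert_{K,\delta}$, and use the hypothesis on $K$ to keep $m(L_K^\pm)$ bounded below so the $L^1$ part of the norm controls the averaged term. The only (cosmetic) difference is that you apply the standard sup-bound for $BV$ functions directly on $L_k^\pm$, whereas the paper anchors at the endpoint value $f(\alpha)$ and bounds it on the fixed interval $L_K^+$; the estimates are otherwise identical.
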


\begin{proof}
For $f \in B_{K, \delta}$, $x \in L_k^+\setminus L_{k-1}^+$
$$|f(x)| \leq |f(x)-f(\alpha)| + |f(\alpha)| \leq \var_{L_k^+} f + \sup_{L^+_{K}} |f| \leq $$
and since $x \leq \frac{1}{k-1+\alpha}$
$$ \leq k ^\delta \Vert f \Vert_{K, \delta} + \var_{L^+_{K}} f + \frac{ \Vert f \Vert_{L^1(I_\alpha)}}{m(L^+_{K})} \leq \left( \left( \frac{1}{|x|}+1 \right)^{\delta} + K^\delta + \frac{1}{m(L^+_{K})} \right)\Vert f \Vert_{K, \delta}$$
Similarly for $x < 0$.
\end{proof}

Moreover, just as in the case of $BV$, the inclusion $B_{K, \delta} \rightarrow L^1$ is compact.

\begin{proposition} \label{compactlog}
For every $K$ sufficiently large, $\delta >0$, the unit ball 
$$ \mathcal{B} = \{ f \in B_{K, \delta} ,  \Vert f \Vert_{K, \delta} \leq 1 \}$$
is compact in the $L^1$-topology.
\end{proposition}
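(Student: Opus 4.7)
The plan is to mimic the classical proof that the unit ball of $BV$ is compactly embedded in $L^1$, but with a diagonal argument that decouples the behaviour on each $L_k$ from the behaviour near $0$. The growth bound of Lemma \ref{powergrowth} is what makes the near-zero region manageable.

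Fix a sequence $\{f_n\} \subset B_{K,\delta}$ with $\Vert f_n \Vert_{K,\delta} \leq 1$. For each $k \geq K$ the restriction $f_n|_{L_k}$ satisfies $\var_{L_k} f_n \leq k^\delta$ and $\Vert f_n \Vert_{L^1(L_k)} \leq 1$, so the sequence is uniformly bounded in $BV(L_k)$. Since $L_k$ is a finite union of intervals, the inclusion $BV(L_k) \hookrightarrow L^1(L_k)$ is compact (Helly's selection theorem), and one can extract a subsequence converging in $L^1(L_k)$. A standard diagonal extraction then yields a single subsequence, which I still denote $\{f_n\}$, that converges in $L^1(L_k)$ for every $k \geq K$; by passing to a further subsequence one may also assume pointwise a.e.\ convergence on $I_\alpha \setminus \{0\}$ to some measurable limit $f$.

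To upgrade this to $L^1(I_\alpha)$-convergence one needs to control the mass of $f_n$ near $0$ uniformly in $n$. This is where Lemma \ref{powergrowth} enters: taking $K$ large enough that its hypothesis $K > \max\{1/\alpha, 1/(1-\alpha)\}$ holds, the lemma gives a constant $A$ such that
$$|f_n(x)| \leq A |x|^{-\delta} \Vert f_n \Vert_{K,\delta} \leq A |x|^{-\delta}$$
for all $n$ and all $x \neq 0$. Since $\delta < 1$, the function $x \mapsto A|x|^{-\delta}$ is integrable on $I_\alpha$, and hence the sequence $\{f_n\}$ is uniformly dominated by an $L^1$-function.

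Finally, by dominated convergence applied to $\{f_n\}$ (using the pointwise a.e.\ convergence obtained from the diagonal argument and the uniform domination just established), one concludes that $f_n \to f$ in $L^1(I_\alpha)$. This proves that the unit ball of $B_{K,\delta}$ is precompact in $L^1$. The main obstacle is precisely the behaviour near $0$: on each $L_k$ the compactness is a routine Helly argument, but without the pointwise bound $|x|^{-\delta}$ one could not rule out a loss of mass concentrating at the origin. The constraint $\delta < 1$ is essential because $|x|^{-\delta}$ must itself be integrable.
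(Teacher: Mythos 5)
Your argument is essentially the paper's own proof: the same diagonal extraction over the exhaustion $\{L_k\}$ using Helly/BV compactness on each $L_k$, and the same use of Lemma \ref{powergrowth} to control the mass near $0$ (the paper splits $\int_{I_\alpha}|f_{n_l}-F|$ into the tail over $I_\alpha\setminus L_k$, bounded by $2\int_{I_\alpha\setminus L_k}A|x|^{-\delta}dx$, plus the part on $L_k$, whereas you invoke dominated convergence; the two devices are interchangeable here). The only step you omit is showing that the limit actually lies in $\mathcal{B}$ --- the proposition asserts compactness, not just the precompactness you claim --- which the paper obtains from lower semicontinuity of total variation under a.e.\ convergence, giving $k^{-\delta}\var_{L_k}F+\Vert F\Vert_{L^1(L_k)}\leq 1$ for every $k\geq K$; this is a routine one-line addition to your argument.
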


\begin{proof}
This fact is well-known when you consider $BV$ instead of $B_{K, \delta}$. Now, given $\{f_n\} \subseteq \mathcal{B}$, for any $k$ the sequence of restrictions $f_n\mid_{L_k}$ sits inside a closed ball in $BV(L_k)$ hence it has a subsequence which converges in $L^1(L_k)$ to some $F_k \in BV(L_k)$. By refining the subsequence as $k \rightarrow \infty$, one finds a subsequence $f_{n_l} \in \mathcal{B}$ such that for every $k$, $f_{n_l} \mid_{L_k} \rightarrow F_k$ in $L^1(L_k)$ and a.e. for $l \rightarrow \infty$. By uniqueness of the limit there exists $F$ such that $F \mid_{L_k} = F_k$. By lower semicontinuity of total variation, $k^{-\delta} \var_{L_k} F + \Vert F \Vert_{L^1(L_k)} \leq 1$, so $F \in \mathcal{B}$. We are just left with proving $f_{n_l} \rightarrow F$ in $L^1(I_\alpha)$ for $l \rightarrow \infty$. By lemma \ref{powergrowth} 
\begin{small}
$$ \int_{I_\alpha} |f_{n_l} - F| \leq \int_{I_\alpha \setminus L_k} |f_{n_l}| + |F| + \int_{L_k} |f_{n_l} - F|  \leq 2 \int_{I_\alpha \setminus L_k} \frac{A}{|x|^\delta} dx + \int_{L_k} |f_{n_l} - F_k|$$
\end {small}
The first term tends to $0$ as $k \rightarrow \infty$ and the second does so for $l \rightarrow \infty$ as $k$ is fixed.
\end{proof}

\subsubsection{Spectral decomposition in $B_{K, \delta}$}

The goal of this section is to prove a spectral decomposition analogous to theorem \ref{spectraldec} in the space $B_{K, \delta}$, namely 

\begin{theorem} \label{decBkdelta}
For every $\alpha \in (0,1]$, $0 < \delta < 1$ and $K$ sufficiently large, the transfer operator $\Phi_\alpha : B_{K, \delta} 
\rightarrow B_{K, \delta}$ decomposes as 
$$\Phi_\alpha = \Pi_\alpha + \Psi_\alpha$$ 
where $\Pi_\alpha$ and $\Psi_\alpha$ are bounded linear, commuting operators on $B_{K, \delta}$, $\rho(\Psi_\alpha) < 1$ and 
$\Pi_\alpha$ is a projection onto a one-dimensional eigenspace.
\end{theorem}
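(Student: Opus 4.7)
The plan is to mimic the proof of Theorem \ref{spectraldec}, replacing the pair $BV(I_\alpha) \subset L^1(I_\alpha)$ by $B_{K,\delta} \subset L^1(I_\alpha)$, and invoking once more the Ionescu-Tulcea-Marinescu theorem together with exactness of $T_\alpha$. Concretely, three ingredients are required: boundedness of $\Phi_\alpha$ on $B_{K,\delta}$, a Lasota-Yorke inequality in the $K,\delta$-norm, and compactness of the inclusion into $L^1$ (which is already Proposition \ref{compactlog}).

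First I would check that $\Phi_\alpha$ maps $B_{K,\delta}$ into itself boundedly. Writing $\Phi_\alpha f(y) = \sum_{a,\epsilon}(a+y)^{-2} f(\epsilon/(a+y))$ and using Lemma \ref{powergrowth} to control pointwise values by $A|x|^{-\delta}\Vert f\Vert_{K,\delta}$, the preimages satisfy $|{\epsilon}/(a+y)|^{-\delta}=(a+y)^\delta$, so each term is bounded by $A(a+y)^{\delta-2}\Vert f\Vert_{K,\delta}$; since $\delta<1$, the series converges uniformly for $y$ bounded away from $-a$, yielding both the $L^1$ and the sup bound. The variation on each $L_k$ is then computed by the analogue of $\var\Phi_\alpha^n(f)\leq\var(fg_{n,\alpha})$, adapted to the $K,\delta$-norm.

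The central step is the Lasota-Yorke inequality: there exist $n$, $\lambda<1$ and $D>0$ such that
$$\Vert \Phi_\alpha^n f\Vert_{K,\delta}\leq \lambda\,\Vert f\Vert_{K,\delta}+D\,\Vert f\Vert_{L^1}.$$
For the $L^1$ part one uses $\Vert\Phi_\alpha^n f\Vert_{L^1}\leq\Vert f\Vert_{L^1}$. For the variation part, restrict to $L_k$ and decompose over cylinders of $\mathcal{P}_n$: for each $I_j$ the contribution is estimated by $\var_{\sigma_j(L_k\cap T_\alpha^n(I_j))}f$ multiplied by the uniform bounds on $g_{n,\alpha}$ from Proposition \ref{classeC}, plus boundary terms coming from cylinders whose image straddles $\partial L_k$. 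The key observation is that $\sigma_j(L_k)$, even when close to $0$, lies inside some $L_{k'}$ whose label $k'$ can be estimated in terms of $k$ and $j$; the weight $k^{-\delta}$ in the norm, combined with the exponential decay $\Vert g_{n,\alpha}\Vert_\infty\leq\gamma_\alpha^n$, absorbs the extra factor $(k')^\delta$ coming from the $B_{K,\delta}$-variation of $f$, while the sum over large $j$ converges thanks to the Jacobian decay $(a+y)^{\delta-2}$ noted above. Choosing $n$ large and $K$ large enough that the boundary contributions are dominated gives the inequality.

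Granted the Lasota-Yorke bound, Proposition \ref{compactlog} supplies the compactness of $B_{K,\delta}\hookrightarrow L^1$, and the Ionescu-Tulcea-Marinescu theorem \cite{ITM} yields a decomposition $\Phi_\alpha=\sum_{i=0}^p \lambda_i \Pi_i+\Psi_\alpha$ with $|\lambda_i|=1$, finite-rank $\Pi_i$, and $\rho(\Psi_\alpha)<1$. To conclude that $1$ is the only peripheral eigenvalue and that it is simple, I invoke the exactness of $T_\alpha$ (\cite{LuzziMarmi}, lemma 1) exactly as in the proof of Theorem \ref{spectraldec}; this argument passes to $B_{K,\delta}$ because the invariant density $\rho_\alpha$ already lies in $BV\subseteq B_{K,\delta}$ and any peripheral eigenfunction in $B_{K,\delta}$ would give rise, after multiplication against a bounded test function, to a non-trivial invariant function in $L^1$, contradicting mixing. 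The hardest part of the argument is the careful accounting in the Lasota-Yorke inequality: tracking how the $k^{-\delta}$ weight interacts with cylinders whose pullback accumulates near $0$, and ensuring that the boundary terms produced when $L_k$ cuts across cylinders can be absorbed uniformly for $K$ sufficiently large.
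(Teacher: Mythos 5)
Your proposal is correct and follows essentially the same route as the paper: the Lasota--Yorke estimate you sketch (splitting the cylinder sum according to the filtration $L_{hk}$, letting the decay of $g_{n,\alpha}$ near $0$ absorb the $k'^\delta$ growth permitted by the norm, and choosing $n$ and $K$ large) is precisely the paper's Proposition \ref{LYlog}. The remaining assembly --- compactness of $B_{K,\delta}\hookrightarrow L^1$ from Proposition \ref{compactlog}, the Ionescu-Tulcea--Marinescu theorem, and exactness/mixing to single out the simple eigenvalue $1$ --- is exactly how the paper proves Theorem \ref{decBkdelta}.
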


The main ingredient to get the spectral decomposition is again a Lasota-Yorke type estimate:

\begin{proposition} \label{LYlog}
Let $\alpha \in \left(0, 1 \right]$, $0 < \delta < 1$. Then there exist $K \in \mathbb{N}$, $0 < \lambda < 1$, $C > 0$, $D > 0$ 
such that 
$$\Vert \Phi^n_\alpha(f)\Vert_{K, \delta}  \leq C \lambda^n \Vert  f  \Vert_{K, \delta}  + D \Vert f \Vert_{L^1}\qquad \forall f \in B_{K, \delta}$$
\end{proposition}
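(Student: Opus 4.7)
The plan is to mimic the proof of Proposition \ref{LY} in the larger space $B_{K,\delta}$: first establish the contraction inequality for one sufficiently large iterate $n_0$, then iterate via the Euclidean-division trick used at the end of that proof.

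The $L^1$ part of $\Vert \Phi_\alpha^n(f)\Vert_{K,\delta}$ is handled trivially, since $\int_{L_k}|\Phi_\alpha^n(f)|\leq \Vert \Phi_\alpha^n(f)\Vert_{L^1}\leq \Vert f\Vert_{L^1}$ uniformly in $k$. Thus it suffices to bound $\sup_{k\geq K} k^{-\delta}\var_{L_k}\Phi_\alpha^n(f)$ by $C\lambda^n\Vert f\Vert_{K,\delta}+D\Vert f\Vert_{L^1}$. I would split $f=f\chi_{L_M}+f\chi_{I_\alpha\setminus L_M}$ for an auxiliary threshold $M$ to be chosen later.

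The \emph{shallow} piece $f\chi_{L_M}$ lies in $BV(I_\alpha)$: by Lemma \ref{powergrowth}, $\sup_{L_M}|f|\leq A(M+1)^\delta\Vert f\Vert_{K,\delta}$, so $\var(f\chi_{L_M})\leq \var_{L_M}f + 2\sup_{L_M}|f|\leq C_1 M^\delta\Vert f\Vert_{K,\delta}$. Applying Proposition \ref{LY} to $f\chi_{L_M}$ and then restricting to $L_k$ yields
\[
k^{-\delta}\var_{L_k}\Phi_\alpha^n(f\chi_{L_M})\leq K^{-\delta}\bigl(C_0\lambda_0^n C_1 M^\delta\Vert f\Vert_{K,\delta}+D_0\Vert f\Vert_{L^1}\bigr).
\]
For the \emph{deep} piece I would use the formula \eqref{transfer} together with the pointwise bound $|f(y)|\leq A|y|^{-\delta}\Vert f\Vert_{K,\delta}$ from Lemma \ref{powergrowth}. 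Only $n$-cylinders $I_{(j_1,\dots,j_n)}\in\mathcal{P}_n$ with first symbol $j_1>M$ contribute, and for each such cylinder the explicit formulas of Section \ref{cyl} give $|\sigma_j(x)|\asymp 1/j_1$ and $|(T_\alpha^n)'\circ\sigma_j|^{-1}\lesssim j_1^{-2}j_2^{-2}\cdots j_n^{-2}$. Writing each summand as $(fg_{n,\alpha})\circ\sigma_j\cdot\chi_{T_\alpha^n(I_j)}$ and applying the product rule for bounded variation, combined with the derivative estimate on $g_{n,\alpha}$ from Proposition \ref{classeC}(2) (and bounding the boundary terms coming from the at most $2n+1$ non-full cylinders via Proposition \ref{classeC}(3)), shows that each summand has $\var_{L_k}\leq C_2 j_1^{\delta-2}j_2^{-2}\cdots j_n^{-2}\Vert f\Vert_{K,\delta}$. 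Since $\delta<1$, summing produces
\[
k^{-\delta}\var_{L_k}\Phi_\alpha^n(f\chi_{I_\alpha\setminus L_M})\leq K^{-\delta}\,\tilde C^n M^{\delta-1}\Vert f\Vert_{K,\delta}
\]
uniformly in $k\geq K$.

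Combining the two bounds and choosing first $n_0$ large so that $C_0\lambda_0^{n_0}$ is small, then $M$ large so that $\tilde C^{n_0}M^{\delta-1}$ is small, and finally $K$ large enough that $K^{-\delta}$ times the remaining $M,n_0$-dependent constants is below $\tfrac{1}{2}$, yields $\Vert\Phi_\alpha^{n_0}(f)\Vert_{K,\delta}\leq \tfrac{1}{2}\Vert f\Vert_{K,\delta}+D'\Vert f\Vert_{L^1}$, from which the claim follows by iteration. The main obstacle is the deep variation estimate: one must control the \emph{variation} on $L_k$ (not merely the $L^\infty$ norm) of the infinite series $\sum_j (fg_{n,\alpha})\circ\sigma_j$, uniformly in $k$, with summable tails in $M$. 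This works precisely because the constraint $\delta<1$ makes $\sum_{j>M}j^{\delta-2}\sim M^{\delta-1}\to 0$, tying the very definition of $B_{K,\delta}$ to the combinatorial structure of $T_\alpha$.
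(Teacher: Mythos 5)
Your overall strategy is viable but genuinely different from the paper's. You split $f=f\chi_{L_M}+f\chi_{I_\alpha\setminus L_M}$, reuse the $BV$ Lasota--Yorke inequality (Proposition \ref{LY}) for the shallow part, and estimate the deep part cylinder-by-cylinder via Lemma \ref{powergrowth} and the decay $1/|(T_\alpha^n)'\circ\sigma_j|\lesssim j_1^{-2}\cdots j_n^{-2}$. The paper does not split $f$ at all: it bounds $\var_{L_k}\Phi_\alpha^n(f)$ directly by grouping the cylinders into those inside $L_k$ and those in the annuli $L_{(h+1)k}\setminus L_{hk}$, using $\var_{L_{(h+1)k}}f\le[(h+1)k]^\delta\Vert f\Vert_{K,\delta}$ together with $g_{1,\alpha}\le(hk+\alpha)^{-2}$, which yields the contraction factor $k^\delta\bigl(\gamma_\alpha^n+M/K^2\bigr)$ in one stroke, with the boundary terms absorbed into $\Vert f\Vert_{L^1}$ via Proposition \ref{classeC}(3). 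Crucially, in that argument all the scales at which the variation of $f$ is invoked are $hk\ge K$, so the $\delta$-growth bound is always legitimately available; both proofs ultimately exploit the same mechanism, namely $\sum_j j^{\delta-2}<\infty$ for $\delta<1$.

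As written, however, your proof has a genuine quantifier-order problem. The bounds $\sup_{L_M}|f|\le A(M+1)^\delta\Vert f\Vert_{K,\delta}$ (with $A$ independent of $K$), $\var(f\chi_{L_M})\le C_1M^\delta\Vert f\Vert_{K,\delta}$, and $\var_{I_j}f\lesssim j_1^\delta\Vert f\Vert_{K,\delta}$ are only correct at scales at least $K$: the $K,\delta$-norm controls $\var_{L_k}f$ for $k<K$ only through $\var_{L_K}f\le K^\delta\Vert f\Vert_{K,\delta}$, and the constant $A$ of Lemma \ref{powergrowth} grows like $K^\delta$ (a spike of height $K^\delta$ placed near $\alpha$ has $\Vert\cdot\Vert_{K,\delta}=O(1)$ but $\sup_{L_M}|\cdot|=\var_{L_M}(\cdot)=2K^\delta$ for every $M$). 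Since you choose $K$ \emph{last} and large, the ``remaining $M,n_0$-dependent constants'' in your final step secretly contain a factor $K^\delta$, so ``$K^{-\delta}$ times them'' is not small and the closing step is circular. The argument can be repaired --- e.g. take $K=M$, or redo the bookkeeping, in which case the shallow coefficient becomes $\approx C_0C_1\lambda_0^{n_0}$ and the deep one $\approx\tilde C^{n_0}(M^{-1}+K^{-1})$, so the smallness comes from $\lambda_0^{n_0}$ and from $M$, not from $K^{-\delta}$ --- but the order of choices must be corrected. Separately, the statement covers $\alpha=1$, where Proposition \ref{LY} (stated only for $\alpha\in(0,1)$, and useless there since $\gamma_1=1$) cannot be quoted; the paper substitutes the bound $\Vert g_{n,1}\Vert_\infty\le4\bigl((\sqrt5-1)/2\bigr)^{2n-4}$, and your shallow step needs an analogous input. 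A minor point: the boundary jumps occur for every non-full cylinder, and Proposition \ref{classeC}(3) counts image sets rather than cylinders; in your deep sum this is harmless since each jump is dominated by twice the summand's supremum, which you already sum.
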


\begin{proof} First consider the case $\alpha < 1$. By formula \eqref{transfer} and lemma \ref{BVprop}, 4.
$$ \var_{L_k^+} \Phi^n_\alpha(f) \leq \sum_{j \in \mathcal{P}_n} \var_{T^n_\alpha(I_j) \cap L_k^+} \frac{f \circ \sigma_j}{|(T^n_\alpha)' \circ \sigma_j| } + 
2 \sup_{T^n_\alpha(I_j)\cap L_k^+} \left| \frac{f \circ \sigma_j}{|(T^n_\alpha)' \circ \sigma_j| } \right| \leq $$
and by lemma \ref{BVprop}, 1. and the fact that $T_\alpha^n: I_j \rightarrow T_\alpha^n(I_j)$ is a homeomorphism
\begin{small}
$$\leq \sum_{I_j \in \mathcal{P}_n} 3 \var_{I_j} \frac{f}{|(T^n_\alpha)'|} + 
\frac{ 2 \int_{I_j}|f(x)|dx}{m(T_\alpha^n(I_j) \cap L_k^+)} \leq 3 \sum_{I_j \in \mathcal{P}_n} \var_{I_j} (f g_{n, \alpha}) + 
\frac{2  \Vert f \Vert_1}{\inf_{j \in \mathcal{P}_n} \{ m(T_\alpha^n(I_j)\cap L_k^+) \} }$$
\end{small}
where the inf is taken over all non-empty intervals. Now, note that by lemma \ref{BVprop}, 3. and prop. \ref{classeC}, 2.
$$\sum_{I_j \in \mathcal{P}_n} \var_{I_j} (f g_{n, \alpha}) \leq 
 \sum_{I_j \in \mathcal{P}_n} \var_{I_j} f \sup_{I_j} g_{n, \alpha} + 
\frac{2 \Vert f \Vert_1 }{1-\gamma_\alpha}$$
hence we are left with only one term to estimate: in order to do so, we will split the sum in several parts, according to the filtration $L_k$:

\begin{small}
$$\sum_{I_j \in \mathcal{P}_n} \var_{I_j} f \sup_{I_j} g_{n, \alpha} \leq 
\left\Vert g_{n, \alpha} \right\Vert_\infty \sum_{\stackrel{I_j \in \mathcal{P}_n}{ I_j \subseteq L_k }} \var_{I_j} f + 
\sum_{\stackrel{h = 1}{\epsilon = \pm}}^\infty \sum_{\stackrel{I_j \in \mathcal{P}_n}{I_j \subseteq L^\epsilon_{(h+1)k}\setminus L^\epsilon_{hk}}} \var_{I_j}f \sup_{I_j} g_{n, \alpha} \leq $$
$$ \leq \gamma_{\alpha}^n \var_{L_k}f  + \sum_{\stackrel{h = 1}{\epsilon = \pm}}^\infty \var_{L^\epsilon_{(h+1)k}} f \sup_{L^\epsilon_{(h+1)k} \setminus L^\epsilon_{hk}} g_{n, \alpha} \leq 
\gamma_\alpha^n  k^\delta \Vert f \Vert_{K, \delta} + \sum_{h =1}^\infty \Vert f \Vert_{K, \delta} [(h+1)k]^\delta \sup_{L_{(h+1)k} \setminus L_{hk}} g_{1, \alpha}  \leq $$
\end{small}
and since $L_{(h+1)k} \setminus L_{hk} = \left[ -\frac{1}{hk + \alpha}, -\frac{1}{(h+1)k+\alpha} \right)\cup \left( \frac{1}{(h+1)k + \alpha}, \frac{1}{hk+\alpha} \right]$
$$ \leq \Vert f \Vert_{K, \delta} k^\delta \left(  \gamma_\alpha^n + \sum_{h =1}^\infty \frac{ (h+1)^\delta}{h^2 k^2} \right) \leq \Vert f \Vert_{K, \delta} k^\delta \left( \gamma_\alpha^n + \frac{M}{K^2} \right)$$
for some universal constant $M$ for all $k\geq K$. The same estimate holds for $\var_{L_k^-} \Phi^n_\alpha(f)$.
Moreover, for fixed $n$ and $\alpha$ the set $\{ T_\alpha^n(I_j) \mid I_j \in \mathcal{P}_n \}$ is finite, and since $L_k^+$ and $L_k^-$ are increasing sequences of intervals, 
$\inf \{ m(T_\alpha^n(I_j) \cap L_k^\pm) \ : \ I_j \in \mathcal{P}_n, \ k \geq K\}$ is bounded below by a positive constant,  
and for every $\alpha$ one can choose $n$ and $K$ such that 
$\lambda := 6\left( \gamma_\alpha^{n} + \frac{M}{K^2} \right)< 1$.
By combining all estimates, there exists a constant $D$ such that 
$$ \Vert \Phi_\alpha^{n}(f) \Vert_{K, \delta} \leq \lambda \Vert f \Vert_{K, \delta} + D \Vert f \Vert_1 \qquad \forall f \in B_{K, \delta}$$
and by iteration the claim follows. The case $\alpha = 1$ follows similarly; in this case $\Vert g_{1, \alpha} \Vert_\infty = 1$, but
proposition \ref{classeC} is replaced by
$$ \Vert g_{n, \alpha} \Vert_\infty \leq 4 \left( \frac{\sqrt{5}-1}{2} \right)^{2n -4} \qquad \Vert g_{n, \alpha}' \Vert_\infty \leq 2$$
\end{proof}


\textit{Proof of theorem \ref{decBkdelta}.} By propositions \ref{compactlog} and \ref{LYlog}, 
the transfer operators $\Phi_\alpha$ acting on $B_{K, \delta}$ satisify the hypotheses of 
Ionescu-Tulcea and Marinescu's theorem \cite{ITM}, hence we have a spectral decomposition of $\Phi_\alpha$
with a finite number of spectral projectors onto eigenvalues of unit modulus. Moreover, 
mixing of $T_\alpha$ still implies there is only one eigenvalue of modulus one and its eigenspace is one-dimensional. \qed

\medskip 
Note that since $BV(I_\alpha) \subseteq B_{K, \delta}$, the invariant density $\rho_\alpha$ previously obtained is still a fixed 
point of $\Phi_\alpha$, hence $\Pi_\alpha$ is nothing but projection onto $\mathbb{C}\rho_\alpha$.

\subsubsection{End of proof}

The proof of theorem \ref{CLTunbounded} now follows from standard application of the martingale central limit theorem. 
We will refer to the version given in (\cite{Viana}, Thm. 2.11). In order to adapt it to our situation, we need the following two lemmas: 

\begin{lemma} \label{Edecay}
Let $\alpha \in (0, 1]$, $0 < \delta < \frac{1}{2}$, and $K$ s.t. theorem \ref{decBkdelta} holds, 
and consider $f \in B_{K, \delta}$ with $\int_{I_\alpha} f d\mu_\alpha = 0$. Denote by $\mathcal{F}_0$ the Borel $\sigma$-algebra
 on $I_\alpha$ and $\mathcal{F}_n := T_\alpha^{-n}(\mathcal{F}_0)$. Then 
$$\sum_{n = 0}^{\infty} \Vert \mathbb{E}(f \mid \mathcal{F}_n) \Vert_{L^2(\mu_\alpha)} < +\infty$$
\end{lemma}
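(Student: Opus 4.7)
The strategy is to express $\mathbb{E}(f\mid \mathcal{F}_n)$ through iterates of the transfer operator, exploit the spectral gap from theorem \ref{decBkdelta} to get exponential decay in the $B_{K,\delta}$-norm, and then transfer that decay to $L^2(\mu_\alpha)$. For the first step, any $\mathcal{F}_n$-measurable function has the form $\varphi\circ T_\alpha^n$; writing $\mathbb{E}(f\mid\mathcal{F}_n)=E_n\circ T_\alpha^n$, testing against $\varphi\circ T_\alpha^n$ and using $T_\alpha$-invariance of $\mu_\alpha$ together with the duality defining $\Phi_\alpha$ identifies $E_n = \Phi_\alpha^n(f\rho_\alpha)/\rho_\alpha$. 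Hence
$$\|\mathbb{E}(f\mid\mathcal{F}_n)\|_{L^2(\mu_\alpha)}^2 \;=\; \int_{I_\alpha}\frac{|\Phi_\alpha^n(f\rho_\alpha)|^2}{\rho_\alpha}\,dx,$$
and the Zweim\"uller lower bound $\rho_\alpha\geq C_\alpha$ on $\{\rho_\alpha>0\}$ (already used earlier in the paper) reduces the task to showing that $\|\Phi_\alpha^n(f\rho_\alpha)\|_{L^2(dx)}$ decays geometrically in $n$.

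For the decay, I observe that $\rho_\alpha\in BV\subset B_{K,\delta}$ and $B_{K,\delta}$ is a $BV$-module, so $f\rho_\alpha\in B_{K,\delta}$. A Ces\`aro-average argument, analogous to the one used after theorem \ref{spectraldec}, identifies the rank-one projector $\Pi_\alpha$ with $g\mapsto\bigl(\int g\,dx\bigr)\rho_\alpha$; since $\int f\rho_\alpha\,dx = \int f\,d\mu_\alpha = 0$, this forces $\Pi_\alpha(f\rho_\alpha)=0$. Consequently $\Phi_\alpha^n(f\rho_\alpha)=\Psi_\alpha^n(f\rho_\alpha)$, and theorem \ref{decBkdelta} yields constants $C>0$ and $0<\lambda<1$ with
$$\|\Phi_\alpha^n(f\rho_\alpha)\|_{K,\delta} \;\leq\; C\lambda^n\|f\rho_\alpha\|_{K,\delta}.$$

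Finally, I transfer this bound to $L^2$ via the pointwise estimate of lemma \ref{powergrowth}: every $g\in B_{K,\delta}$ satisfies $|g(x)|\leq A|x|^{-\delta}\|g\|_{K,\delta}$, so
$$\|g\|_{L^2(dx)}^2 \;\leq\; A^2\|g\|_{K,\delta}^2\int_{I_\alpha}|x|^{-2\delta}\,dx,$$
and the last integral is finite precisely because $2\delta<1$. Combining the three estimates gives $\|\mathbb{E}(f\mid\mathcal{F}_n)\|_{L^2(\mu_\alpha)}\leq C''\lambda^n$, whose sum over $n$ converges geometrically. The only place where the hypothesis $\delta<\tfrac{1}{2}$ is genuinely needed is this continuous embedding $B_{K,\delta}\hookrightarrow L^2$, and that is also the main (rather mild) technical point of the argument; the rest is a direct consequence of the Lasota--Yorke estimate proved in proposition \ref{LYlog} and the resulting spectral gap.
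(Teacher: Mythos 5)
Your proposal is correct and follows essentially the same route as the paper: identify $\mathbb{E}(f\mid\mathcal{F}_n)$ with $\Phi_\alpha^n(f\rho_\alpha)/\rho_\alpha$ (the paper does this via the dual sup characterization), use the lower bound on $\rho_\alpha$, note $\Pi_\alpha(f\rho_\alpha)=0$ so that $\Phi_\alpha^n(f\rho_\alpha)=\Psi_\alpha^n(f\rho_\alpha)$ decays geometrically in $B_{K,\delta}$-norm by theorem \ref{decBkdelta}, and pass to $L^2(dx)$ through lemma \ref{powergrowth} using $2\delta<1$. The only presentational difference is that you spell out explicitly why the projector kills $f\rho_\alpha$, which the paper leaves implicit.
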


\begin{proof}
$$\Vert \mathbb{E}(f \mid \mathcal{F}_n) \Vert_{L^2(\mu_\alpha)} = \sup \left\{\int_{I_\alpha} (\psi \circ T_\alpha^n) f d\mu_\alpha: \psi \in L^2(\mu_\alpha), \Vert \psi \Vert_{L^2(\mu_\alpha)} = 1 \right\} = $$
$$= \sup\left\{\int_{I_\alpha} \psi \Phi_\alpha^n(f \rho_\alpha) dx : \psi \in L^\infty(\mu_\alpha), \Vert \psi \Vert_{L^2(\mu_\alpha)} = 1 \right\} \leq \frac{\Vert \Phi_\alpha^n(f \rho_\alpha) \Vert_{L^2(dx)}}{\sqrt{\inf \rho_\alpha}}$$
Now, by lemma \ref{powergrowth} and since $0 < \delta < \frac{1}{2}$, $\Vert \Phi_\alpha^n(f\rho_\alpha)\Vert_{L^2(dx)} \leq C \Vert \Phi^n_\alpha(f \rho_\alpha) \Vert_{K, \delta}$, and 
by theorem \ref{decBkdelta} $\Phi^n_\alpha(f\rho_\alpha) = \Psi^n(f \rho_\alpha)$ goes to $0$ exponentially fast in $B_{K, \delta}$-norm as $n \rightarrow \infty$.
\end{proof}

\begin{lemma} \label{Bcohom}
Let $f \in B_{K, \delta}$ real-valued, non-constant such that $\int_{I_\alpha} f d\mu_\alpha = 0$. Then there exists no function $u \in B_{K, \delta}$ such that
$$f = u - u \circ T_\alpha \quad \mu_\alpha-a.e.$$ 
\end{lemma}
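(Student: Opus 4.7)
We proceed by contradiction: assume $u \in B_{K, \delta}$ satisfies $u - u \circ T_\alpha = f$ almost everywhere. The plan is to mimic the rigidity argument used in the BV case just above, where one exploited the abundance of full cylinders of $\mathcal{P}_1$; the difference is that here $u \circ T_\alpha$ need not have globally bounded variation on $I_\alpha$, so the ``sum over infinitely many cylinders'' argument must be replaced by an averaging argument that uses the filtration $\{L_k\}$ built into the $K, \delta$-norm.

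For each full cylinder $I_j \in \mathcal{P}_1$, the map $T_\alpha|_{I_j}: I_j \to I_\alpha$ is a monotone homeomorphism, so $\var_{I_j}(u \circ T_\alpha) = \var_{I_\alpha \setminus \{0\}} u$. The full cylinders sitting inside $L_k$ are $\{I_j^+ : j_{\min}+1 \leq j \leq k\}$ together with $\{I_j^- : 3 \leq j \leq k\}$, whose total number $N(k) = 2k - j_{\min} - 2$ grows linearly in $k$. Since these are pairwise-disjoint ordered subintervals of $L_k^+$ or $L_k^-$, summing the identity above and using the elementary fact that variation over a disjoint union of ordered subintervals dominates the sum of variations yields
\begin{equation*}
N(k) \cdot \var_{I_\alpha \setminus \{0\}} u \;=\; \sum_{\substack{I_j \text{ full} \\ I_j \subseteq L_k}} \var_{I_j}(u \circ T_\alpha) \;\leq\; \var_{L_k}(u \circ T_\alpha).
\end{equation*}
On the other hand, the cohomology equation $u \circ T_\alpha = u - f$ combined with the defining estimate of the $K,\delta$-norm gives
\begin{equation*}
\var_{L_k}(u \circ T_\alpha) \;\leq\; \var_{L_k} u + \var_{L_k} f \;\leq\; k^\delta \bigl(\|u\|_{K, \delta} + \|f\|_{K, \delta}\bigr).
\end{equation*}

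Combining the two estimates, $N(k) \cdot \var_{I_\alpha \setminus \{0\}} u \leq C k^\delta$ for every $k \geq K$. Since $\delta < 1$ while $N(k) \sim 2k$, letting $k \to \infty$ forces $\var_{I_\alpha \setminus \{0\}} u = 0$; hence $u$ coincides a.e.\ with a constant, and substituting back into the cohomology equation yields $f = 0$ a.e., contradicting non-constancy of $f$. The main technical care is to work throughout with minimal-variation representatives (as chosen in Section~2), so that the identity $\var_{I_j}(u \circ T_\alpha) = \var_{I_\alpha \setminus \{0\}} u$ genuinely holds and so that the cohomology equation may be composed with $T_\alpha$ without losing control over the null set of preimages of~$0$.
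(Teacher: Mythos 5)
Your argument is correct and is essentially the paper's own proof: both exploit the full cylinders of $\mathcal{P}_1$ contained in $L_k$, whose number grows linearly in $k$ while each contributes the full variation of $u$ to $\var_{L_k}(u\circ T_\alpha)$, contradicting the bound $\var_{L_k}(u\circ T_\alpha)\leq k^{\delta}\Vert u\circ T_\alpha\Vert_{K,\delta}$ with $\delta<1$ unless $u$ is constant a.e. The only differences are cosmetic (you sum over both $L_k^{+}$ and $L_k^{-}$ where the paper uses only $L_k^{+}$, and you spell out that $u\circ T_\alpha=u-f\in B_{K,\delta}$).
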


\begin{proof}
Notice that $\mu_\alpha$ and Lebesgue measure are abs. continuous w.r.t. each other, hence measure zero sets are the same. 
Suppose there exists $u$ which satisfies the equation; then, $u \circ T_\alpha$ belongs to $B_{K, \delta}$. 
However, 
$$\var_{L_k^+}(u \circ T_\alpha) \geq \sum_{\stackrel{ I_j \textup{ full}}{j \leq k}} \var_{I_j} (u \circ T_\alpha) 
= \sum_{\stackrel{ I_j \textup{ full}}{j \leq k}} \var_{(\alpha-1, \alpha)} u = (k - j_{min})\var_{(\alpha-1, \alpha)} u$$
On the other hand, $\var_{L_k^+}(u\circ T_\alpha) \leq k^{\delta}\Vert u\circ T_\alpha \Vert_{K, \delta}$ with $\delta < 1$, which contradicts 
the previous estimate unless $\var_{(\alpha-1, \alpha)} u = 0$, i.e. $u$ is constant a.e.
\end{proof}

\textit{Proof of theorem \ref{CLTunbounded}.} 
We can assume $\int_{I_\alpha} f d\mu_\alpha = 0$. By (\cite{Viana}, Thm. 2.11) and lemma \ref{Edecay}, 
the claim follows unless there exists $u \in L^2(\mu_\alpha)$ such that
$$f =  u - u \circ T_\alpha \qquad \mu_\alpha-a.e.$$
If there exists such $u$, one can assume that $\int u \ d\mu_\alpha = 0$, and then, by the proof of (\cite{Viana}, Thm. 2.11), $u$ is given by 
$$ u = - \sum_{j = 1}^{\infty} \frac{\Phi_\alpha^j(f \rho_\alpha)}{\rho_\alpha}$$ 
where convergence of the series is in $L^2(\mu_\alpha)$.
By the spectral decomposition, $\sum_j \Phi^j(f \rho_\alpha)$ converges also in $B_{K, \delta} \subseteq L^2(\mu_\alpha)$. Moreover, since $\rho_\alpha$ 
is in $BV$ and is bounded from below, then $\frac{1}{\rho_\alpha}$ is in $BV$. Thus,  
$u$ lies in $B_{K, \delta}$, and this contradicts lemma \ref{Bcohom} unless $f$ is constant.  \qed

\medskip
Now, the function $x \mapsto \log|x|$ belongs to every $B_{K, \delta}$, therefore 

\begin{corollary}
For every $\alpha \in (0,1]$, the Birkhoff averages for the observable $\log|T_\alpha'(x)| = -2\log|x|$ distribute normally around the value $h(T_\alpha)$. 
\end{corollary}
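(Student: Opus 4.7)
The plan is to verify that the hypotheses of Theorem \ref{CLTunbounded} apply to the observable $f(x) := \log|T_\alpha'(x)| = -2\log|x|$, and then to identify the centering constant via Rohlin's formula \eqref{Rohlin}.

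First, I would check that $f \in B_{K,\delta}$ for every $K$ sufficiently large and every $0 < \delta < \frac{1}{2}$. Since $f$ is monotone on each connected component of $L_k = L_k^+ \cup L_k^-$ (because $x \mapsto -2\log|x|$ is strictly monotone on each side of the origin), its total variation on $L_k^{\pm}$ equals the difference of the boundary values:
$$\var_{L_k^+} f = 2\log(k+\alpha) - 2\log \alpha, \qquad \var_{L_k^-} f = 2\log(k+\alpha) - 2\log(1-\alpha).$$
In particular $\var_{L_k} f = O(\log k)$, so $k^{-\delta} \var_{L_k} f$ stays bounded as $k \to \infty$ for any $\delta > 0$. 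Moreover $\int_{I_\alpha} |\log|x|| \, dx < \infty$ bounds the $L^1$-contribution uniformly in $k$. Hence $\Vert f \Vert_{K,\delta} < \infty$, so $f \in B_{K,\delta}$.

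Next, $f$ is clearly non-constant and real-valued. By Rohlin's formula \eqref{Rohlin}, $\int_{I_\alpha} f \, d\mu_\alpha = h(T_\alpha)$. Therefore Theorem \ref{CLTunbounded} applies directly to $f$, yielding $\sigma > 0$ such that
$$\lim_{n \to \infty} \mu_\alpha\!\left( \frac{S_n(f) - n\, h(T_\alpha)}{\sqrt{n}} \leq v \right) = \frac{1}{\sigma\sqrt{2\pi}}\int_v^{+\infty} e^{-t^2/(2\sigma^2)}\, dt$$
for every $v \in \mathbb{R}$, which is the claim. No step here is a genuine obstacle: the only thing to verify is membership in $B_{K,\delta}$, which is immediate from the logarithmic growth of the variation together with the integrability of $\log|x|$ near the origin.
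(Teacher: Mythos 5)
Your argument is correct and is essentially the paper's own proof: the paper simply observes that $\log|x|$ lies in every $B_{K,\delta}$ and then invokes Theorem \ref{CLTunbounded} together with Rohlin's formula \eqref{Rohlin} to identify the mean as $h(T_\alpha)$, which is exactly what you do, with the membership check written out. Only a trivial slip: since $f(\alpha)=-2\log\alpha$ and $f(\alpha-1)=-2\log(1-\alpha)$, the boundary-value differences are $2\log(k+\alpha)+2\log\alpha$ and $2\log(k+\alpha)+2\log(1-\alpha)$, which does not affect the $O(\log k)$ bound or anything downstream.
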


\section{Stability of standard deviation} \label{last}

Having established the convergence of Birkhoff sums to a Gaussian distribution, we are now interested in analyzing how the standard deviation of this Gaussian varies when $\alpha$ varies.
 The question is motivated by the numerical simulations in \cite{CMPT}, section 2. We prove the following 

\begin{theorem} \label{contsigma}
Let $f : (-1, 1) \rightarrow \mathbb{R}$ of class $C^1$. For every $\alpha \in (0, 1)$ let us consider the variance 
$$\sigma_\alpha^2 := \lim_{n \rightarrow \infty} \int_{I_\alpha} \left( \frac{S_n f - n \int_{I_\alpha} f d\mu_\alpha}{\sqrt{n}} \right)^2 d\mu_\alpha$$
Then, for every $\underline{\alpha} \in (0, 1)$
$$\lim_{\alpha \rightarrow \underline{\alpha}} \sigma_{\alpha}^2 = \sigma_{\underline{\alpha}}^2.$$ 
\end{theorem}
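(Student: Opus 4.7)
The plan is to use a Green-Kubo representation for the variance and then pass to the limit term by term in the resulting correlation series. First I translate to the common interval $[0,1]$ via the conjugation $\tilde T_\alpha$ introduced in Section~\ref{Kellerdistsect}, so that $\tilde f_\alpha(x) := f(x+\alpha-1)$ lies in $C^1([0,1])$ with norm uniformly bounded in $\alpha$, and $\tilde f_\alpha \to \tilde f_{\underline{\alpha}}$ uniformly as $\alpha \to \underline{\alpha}$. Setting $\bar f_\alpha := \tilde f_\alpha - \tilde\mu_\alpha(\tilde f_\alpha)$, Proposition~\ref{variance} together with exponential decay of correlations yields
\[
\sigma_\alpha^2 = \int_0^1 \bar f_\alpha^2\, d\tilde\mu_\alpha + 2\sum_{n=1}^{\infty} C_\alpha(n), \qquad C_\alpha(n) := \int_0^1 \Phi_\alpha^n(\bar f_\alpha \tilde\rho_\alpha)\,\bar f_\alpha\, dx.
\]

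Next I need a bound on $|C_\alpha(n)|$ uniform in $\alpha$ near $\underline{\alpha}$. Proposition~\ref{LY} is a Lasota-Yorke inequality with constants uniform on a neighbourhood $U$ of $\underline{\alpha}$; combining it with Theorem~\ref{KellerLiv} applied to the family $\{\Phi_\alpha\}_{\alpha\in U}$ provides a uniform spectral gap $\rho(\Psi_\alpha)\le\lambda_0<1$ and uniform bounds $\Vert\Psi_\alpha^n\Vert_{BV}\le C\lambda_0^n$. Since $\int\bar f_\alpha\, d\tilde\mu_\alpha=0$ one has $\Pi_\alpha(\bar f_\alpha\tilde\rho_\alpha)=0$, so $\Phi_\alpha^n(\bar f_\alpha\tilde\rho_\alpha)=\Psi_\alpha^n(\bar f_\alpha\tilde\rho_\alpha)$; using the $C^1$-regularity of $f$ and local boundedness of $\Vert\tilde\rho_\alpha\Vert_{BV}$ (again from Proposition~\ref{LY}), one obtains $|C_\alpha(n)|\le K\lambda_0^n$ uniformly in $\alpha\in U$.

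Then I show $C_\alpha(n)\to C_{\underline{\alpha}}(n)$ for each fixed $n$. Lemmas~\ref{Kellerlemma} and~\ref{Kellerdist} give
\[
\vert \! \vert \! \vert \Phi_\alpha-\Phi_{\underline{\alpha}} \vert \! \vert \! \vert \le 12\,d(\tilde T_\alpha,\tilde T_{\underline{\alpha}})=O(|\alpha-\underline{\alpha}|^{1/2})
\]
in the operator norm from $BV$ to $L^1$, while Section~\ref{Holdercont} yields $\Vert\tilde\rho_\alpha-\tilde\rho_{\underline{\alpha}}\Vert_{L^1}\to 0$ and hence $\tilde\mu_\alpha(\tilde f_\alpha)\to\tilde\mu_{\underline{\alpha}}(\tilde f_{\underline{\alpha}})$; combining these with uniform convergence of $\tilde f_\alpha$, the function $g_\alpha:=\bar f_\alpha\tilde\rho_\alpha$ satisfies $\Vert g_\alpha-g_{\underline{\alpha}}\Vert_{L^1}\to 0$ while $\Vert g_\alpha\Vert_{BV}$ remains uniformly bounded. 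Telescoping
\[
\Phi_\alpha^n g_\alpha - \Phi_{\underline{\alpha}}^n g_{\underline{\alpha}} = \Phi_\alpha^n(g_\alpha-g_{\underline{\alpha}}) + \sum_{k=0}^{n-1}\Phi_\alpha^{n-1-k}(\Phi_\alpha-\Phi_{\underline{\alpha}})\Phi_{\underline{\alpha}}^k g_{\underline{\alpha}},
\]
and using $\Vert\Phi_\alpha^j\Vert_{L^1\to L^1}\le 1$ together with uniform $BV$-boundedness of $\Phi_{\underline{\alpha}}^k g_{\underline{\alpha}}$, the $L^1$-norm of the right-hand side is $O\bigl(\Vert g_\alpha-g_{\underline{\alpha}}\Vert_{L^1}+n|\alpha-\underline{\alpha}|^{1/2}\bigr)$, which tends to $0$ for $n$ fixed. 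Integrating against $\bar f_\alpha\in L^\infty$ gives $C_\alpha(n)\to C_{\underline{\alpha}}(n)$; the variance term $\int\bar f_\alpha^2\, d\tilde\mu_\alpha$ is handled similarly but more simply.

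The conclusion $\sigma_\alpha^2\to\sigma_{\underline{\alpha}}^2$ then follows by dominated convergence on the series, using the bound of Step~2 as the summable dominator. The hard part is that the telescoping estimate of Step~3 degrades linearly in $n$, so one must fix $n$ before taking $\alpha\to\underline{\alpha}$ and only afterwards truncate the tail using the uniform exponential decay of Step~2; this two-scale interplay is exactly what the uniform Lasota-Yorke estimate of Section~\ref{LYsect} and the Keller-Liverani stability developed in Section~\ref{Kellerdistsect} were designed to support.
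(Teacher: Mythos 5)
Your argument is correct, but it takes a genuinely different route from the paper. The paper proves the theorem by working with the perturbed operators $\Phi_{\alpha,f,\theta}(g)=\Phi_\alpha(e^{\theta S_1 f}g)$: it establishes a Lasota--Yorke inequality uniform in both $\alpha$ and $\theta$ (this is where the $C^1$ hypothesis enters, to control $(e^{\theta S_nf}/(T_\alpha^n)')'$), a continuity estimate $\vert\!\vert\!\vert \Phi_{\alpha,f,\theta}-\Phi_{\underline{\alpha},\underline{f},\theta}\vert\!\vert\!\vert$ in the Keller distance, and then applies Theorem \ref{KellerLiv} to the two-parameter family to get uniform-in-$\theta$ convergence of the leading eigenvalues $\lambda_{\alpha,f_\alpha}(\theta)$; analyticity in $\theta$ then gives convergence of $\lambda''_{\alpha,f_\alpha}(0)=\sigma_\alpha^2$. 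You instead bypass the perturbed operators entirely: you write $\sigma_\alpha^2$ as the Green--Kubo series of autocorrelations, get term-by-term convergence from the Keller-distance telescoping plus $L^1$-continuity of $\tilde\rho_\alpha$, and conclude by dominated convergence using a decay bound on $C_\alpha(n)$ that is exponential uniformly in $\alpha$ near $\underline{\alpha}$. This buys a proof that only needs the unperturbed uniform Lasota--Yorke inequality of Proposition \ref{LY} (and in fact would work for bounded BV observables with continuity, not just $C^1$), whereas the paper's route additionally yields uniform control of the whole pressure-like function $\theta\mapsto\lambda_{\alpha,f_\alpha}(\theta)$. One point in your Step 2 should be stated more carefully: Theorem \ref{KellerLiv}, as quoted, does not directly give uniform bounds $\Vert\Psi_\alpha^n\Vert_{BV}\le C\lambda_0^n$; it gives containment $\sigma(\Phi_\alpha)\subseteq V_{\delta,r}$ and triple-norm resolvent estimates. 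What you actually need (and can get) is the mixed-norm bound $\Vert\Psi_\alpha^n\Vert_{BV\to L^1}\le C\lambda^n$ uniformly near $\underline{\alpha}$: represent $\Psi_\alpha^n$ as a contour integral of $z^n(z-\Phi_\alpha)^{-1}$ over a circle $|z|=\lambda$ avoiding $V_{\delta,r}$, use the rank-stability statement (\cite{KellerLiverani}, cor.~3, already invoked in Lemma \ref{convergence}) to know the spectral projector near $1$ stays one-dimensional, and bound $\vert\!\vert\!\vert(z-\Phi_\alpha)^{-1}\vert\!\vert\!\vert$ uniformly by the resolvent-difference estimate plus the fixed resolvent of $\Phi_{\underline{\alpha}}$. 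Since $|C_\alpha(n)|\le\Vert\bar f_\alpha\Vert_\infty\Vert\Psi_\alpha^n g_\alpha\Vert_{L^1}$ with $\Vert g_\alpha\Vert_{BV}$ locally bounded, this weaker statement suffices for your summable dominator, so the gap is one of citation rather than substance.
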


The variance $\sigma_\alpha^2$ of the limit distribution is the second derivative of 
the eigenvalues $\lambda_0(\theta)$ of the perturbed transfer operators $\{\Phi_{\alpha, f, \theta}\}$ (see the discussion in section \ref{CLTBVsect}, and 
in particular eq. 2. after prop. \ref{variance}). In order to prove the theorem, we will
prove uniform convergence in $\alpha$ of the eigenvalues, via application of theorem \ref{KellerLiv} to the 
family $\{\Phi_{\alpha, f, \theta}\}_{\{|\alpha - \underline{\alpha}| < \epsilon, |\theta| < \epsilon , 
\Vert f - \underline{f} \Vert_\infty < \epsilon \}}$. 

Hypothesis (1) of thm. \ref{KellerLiv} is easily proved:

\begin{lemma} \label{condition1}
For any $C > 0$, there exists $M > 0$ such that 
$$\Vert \Phi_{\alpha, f, \theta}^n \Vert_1 \leq M^n \qquad \forall n \in \mathbb{N} \ \forall \alpha \in (0,1) \ \forall |\theta| < C$$
for every $f \in L^{\infty}(I_\alpha)$ s.t $\Vert f \Vert_\infty \leq C$.
\end{lemma}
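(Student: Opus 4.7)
The plan is immediate from the identity
$$\Phi_{\alpha, f, \theta}^n(g) = \Phi_\alpha^n\bigl(\exp(\theta S_n f)\cdot g\bigr)$$
noted right after the definition of the perturbed operator in section \ref{CLTBVsect}. The key observation is that the unperturbed transfer operator $\Phi_\alpha$ is a contraction on $L^1$: indeed, from the defining duality $\int \Phi_\alpha(h) g\, dx = \int h\, (g\circ T_\alpha)\, dx$, taking $g = \mathrm{sign}(\Phi_\alpha h)$ gives $\Vert \Phi_\alpha h \Vert_1 \leq \Vert h \Vert_1$, so by iteration $\Vert \Phi_\alpha^n h \Vert_1 \leq \Vert h\Vert_1$.

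Applying this with $h = \exp(\theta S_n f)\cdot g$ yields
$$\Vert \Phi_{\alpha, f, \theta}^n(g)\Vert_1 \leq \Vert \exp(\theta S_n f)\cdot g\Vert_1 \leq \Vert \exp(\theta S_n f)\Vert_\infty \cdot \Vert g\Vert_1.$$
Since $|\theta| < C$ and $\Vert f\Vert_\infty \leq C$, we have the pointwise bound
$$|\theta\, S_n f(x)| \leq |\theta|\cdot n\cdot \Vert f\Vert_\infty \leq C^2 n,$$
hence $\Vert \exp(\theta S_n f)\Vert_\infty \leq e^{C^2 n}$. Taking $M := e^{C^2}$ finishes the argument, with the bound uniform in $\alpha \in (0,1)$, in $\theta$ with $|\theta|<C$, and in $f$ with $\Vert f\Vert_\infty \leq C$, as required.

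There is no serious obstacle here; the only point to be slightly careful about is that the $L^1$-contraction property of $\Phi_\alpha$ holds on every $I_\alpha$ uniformly, which follows directly from the duality definition and so does not depend on any delicate features of the family $T_\alpha$ (expansion, spectral gap, etc.). The bound $M = e^{C^2}$ is crude but entirely sufficient for the role this lemma plays in verifying hypothesis (1) of theorem \ref{KellerLiv}.
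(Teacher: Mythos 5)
Your proof is correct and follows essentially the same route as the paper: rewrite $\Phi_{\alpha,f,\theta}^n(g)=\Phi_\alpha^n(e^{\theta S_n f}g)$, use that the unperturbed transfer operator has unit $L^1$-norm, and bound $\Vert e^{\theta S_n f}\Vert_\infty$ by $e^{n|\theta|\Vert f\Vert_\infty}\leq e^{C^2 n}$ (the paper uses the marginally sharper $e^{n|\mathrm{Re}\,\theta|\,\Vert f\Vert_\infty}$, which makes no difference here). Nothing further is needed.
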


\begin{proof}
 For $g \in BV$
$$\Vert \Phi_{\alpha, f, \theta}^n(g) \Vert_1 = \Vert \Phi_{\alpha}^n(e^{\theta S_n f}g) \Vert_1 
\leq \Vert e^{\theta S_n f}g \Vert_1 \leq \Vert e^{\theta S_n f} \Vert_{\infty} \Vert g\Vert_1  \leq 
e^{n |Re \theta| \Vert f \Vert_{\infty}} \Vert  g \Vert_1 $$
where we used that the unperturbed operators have unit norm on $L^1$.
\end{proof} 

Hypothesis (4) follows directly from lemma \ref{Kellerdist}; the precise statement, whose proof we omit, is the following:

\begin{lemma} \label{continuitymodule}
Let $\underline{\alpha} \in (0,1)$. Then there exist $\epsilon, C > 0$ such that for any $f, \underline{f} \in BV([0,1])$ s.t. $\Vert f - \underline{f} \Vert_\infty < \epsilon$, $\forall |\theta| < \epsilon$, $\forall |\alpha - \underline{\alpha}| < \epsilon$
$$ \vert \! \vert \! \vert \Phi_{\underline{\alpha}, \underline{f}, \theta} - \Phi_{\alpha, f, \theta} \vert \! \vert \! \vert \leq  C\left( |\alpha -\underline{\alpha}|^{1/2} + \Vert f - \underline{f} \Vert_\infty\right)$$ 
\end{lemma}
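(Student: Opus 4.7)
The strategy is to isolate the two sources of perturbation via the telescoping decomposition
$$\Phi_{\underline{\alpha}, \underline{f}, \theta} - \Phi_{\alpha, f, \theta} = \bigl(\Phi_{\underline{\alpha}, \underline{f}, \theta} - \Phi_{\alpha, \underline{f}, \theta}\bigr) + \bigl(\Phi_{\alpha, \underline{f}, \theta} - \Phi_{\alpha, f, \theta}\bigr).$$
I would bound the first summand by $|\alpha - \underline{\alpha}|^{1/2}$ and the second by $\Vert f - \underline{f}\Vert_\infty$, then add the two estimates.

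For the map-perturbation piece, rewrite $(\Phi_{\underline{\alpha}, \underline{f}, \theta} - \Phi_{\alpha, \underline{f}, \theta})(g) = (\Phi_{\underline{\alpha}} - \Phi_{\alpha})(e^{\theta \underline{f}} g)$. Lemma \ref{Kellerlemma} combined with Lemma \ref{Kellerdist} gives
$$\vert \! \vert \! \vert \Phi_{\underline{\alpha}} - \Phi_{\alpha} \vert \! \vert \! \vert \leq 12\, d(\tilde T_{\underline{\alpha}}, \tilde T_{\alpha}) \leq C_1 |\alpha - \underline{\alpha}|^{1/2},$$
and the product rule for bounded variation together with $\var(e^{\theta \underline{f}}) \leq |\theta| e^{|\theta| \Vert \underline{f}\Vert_\infty} \var \underline{f}$ yields $\Vert e^{\theta \underline{f}} g\Vert_{BV} \leq C_2 \Vert g\Vert_{BV}$, where $C_2$ depends on $\Vert \underline{f}\Vert_{BV}$ and $\epsilon$. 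The reason for choosing this particular ordering of the telescoping is precisely that the $BV$-bound then falls on the fixed base point $\underline{f}$ rather than on the moving $f$.

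For the observable-perturbation piece, $(\Phi_{\alpha, \underline{f}, \theta} - \Phi_{\alpha, f, \theta})(g) = \Phi_{\alpha}\bigl((e^{\theta \underline{f}} - e^{\theta f}) g\bigr)$. Since $\Phi_{\alpha}$ is a contraction on $L^1$, this has $L^1$-norm at most $\Vert e^{\theta \underline{f}} - e^{\theta f}\Vert_\infty \Vert g\Vert_{L^1}$, and the mean value theorem applied to $t \mapsto e^{\theta t}$ gives $\Vert e^{\theta \underline{f}} - e^{\theta f}\Vert_\infty \leq |\theta|\, e^{|\theta|(\Vert \underline{f}\Vert_\infty + \epsilon)} \Vert f - \underline{f}\Vert_\infty$. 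The trivial bound $\Vert g\Vert_{L^1} \leq \Vert g\Vert_{BV}$ finishes this piece and produces the $\Vert f - \underline{f}\Vert_\infty$ contribution to the final estimate.

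The main obstacle to watch is the constant $C_2$ in the map-perturbation step: it depends on $\var \underline{f}$, which is not controlled by $\Vert f - \underline{f}\Vert_\infty$ alone. The lemma should therefore be read as holding with $C$ allowed to depend (harmlessly) on $\Vert \underline{f}\Vert_{BV}$; this dependence causes no issue in the application to Theorem \ref{contsigma}, where $\underline{f}$ arises by translating a fixed $C^1$ function on $(-1,1)$ to $[0,1]$ and therefore has uniformly bounded variation as $\alpha$ ranges in a neighbourhood of $\underline{\alpha}$.
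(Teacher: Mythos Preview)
The paper omits the proof of this lemma, stating only that hypothesis (4) ``follows directly from lemma \ref{Kellerdist}''. Your telescoping argument is the natural way to flesh out that hint, and the details are correct: the map-perturbation piece reduces to $\vert\!\vert\!\vert \Phi_{\underline{\alpha}} - \Phi_{\alpha}\vert\!\vert\!\vert$ via the multiplication bound $\Vert e^{\theta \underline{f}} g\Vert_{BV} \leq C_2 \Vert g\Vert_{BV}$, and the observable-perturbation piece is handled by the $L^1$-contraction property of $\Phi_\alpha$ plus the mean value theorem.

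Your remark about the constant $C$ depending on $\Vert \underline{f}\Vert_{BV}$ is a fair observation: the lemma as stated reads as if $C$ is uniform over all $\underline{f} \in BV([0,1])$, which your argument does not give. You are right that this is harmless in the only application (Lemma \ref{convergence} and Theorem \ref{contsigma}), where $\underline{f} = f_{\underline{\alpha}}$ is a fixed translate of a $C^1$ function and so has controlled variation. This is a reasonable caveat to flag rather than a gap in your proof.
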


We now check condition (2), using the estimates in section \ref{LY} to get a Lasota-Yorke inequality which is uniform in both $\alpha$ and $\theta$.

\begin{proposition}
Let $\underline{\alpha} \in (0, 1)$. There exist $0 < \lambda <1$, $\epsilon$, $C_2, C_3$ such that
$$ \var_{I_\alpha} \Phi^n_{\alpha, f, \theta}(g) \leq C_2 \lambda^n \var_{I_\alpha} g + C_3 \Vert g \Vert_1 \quad \forall n \in \mathbb{N} $$
for every $\alpha \in (\underline{\alpha}-\epsilon, \underline{\alpha} + \epsilon)$, for every $|\theta| < \epsilon$ and for every $f \in C^1(I_\alpha)$
with $\Vert f \Vert_{C^1} \leq 1$.

\end{proposition}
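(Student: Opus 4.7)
The plan is to adapt the proof of Proposition \ref{LY} to the perturbed operator, starting from the identity
$$\Phi^n_{\alpha,f,\theta}(g) = \Phi^n_\alpha\bigl(e^{\theta S_n f} g\bigr).$$
Applying the opening lemma of the proof of Proposition \ref{LY} with $h = e^{\theta S_n f} g$ reduces the problem to bounding $\var\bigl(e^{\theta S_n f}\, g\, g_{n,\alpha}\bigr)$. I would then show that this quantity satisfies the same Lasota--Yorke--type estimate as $\var(g\cdot g_{n,\alpha})$, up to additional corrections of order $|\theta|\,\Vert f\Vert_{C^1}$ which can be made arbitrarily small.

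The key technical observation is the pointwise identity
$$|(T_\alpha^k)'(x)|\, g_{n,\alpha}(x) = \frac{1}{|(T_\alpha^{n-k})'(T_\alpha^k x)|} \leq \gamma_\alpha^{n-k},$$
which follows from the chain rule and the uniform bound of Proposition \ref{classeC}.1. Combined with $|(S_n f)'(x)|\leq \sum_{k=0}^{n-1}\Vert f'\Vert_\infty\,|(T_\alpha^k)'(x)|$, this yields the uniform pointwise bound
$$|(S_n f)'(x)|\, g_{n,\alpha}(x) \leq \Vert f'\Vert_\infty\sum_{k=0}^{n-1}\gamma_\alpha^{n-k}\leq \frac{\gamma_\alpha \Vert f'\Vert_\infty}{1-\gamma_\alpha},$$
so the smooth contribution coming from $(e^{\theta S_n f})' = \theta (S_n f)' e^{\theta S_n f}$, once weighted by $g_{n,\alpha}$, is controlled uniformly in $n$ and $\alpha$. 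This is essential because $\var e^{\theta S_n f}$ on the whole of $I_\alpha$ is a priori infinite: each cylinder boundary of $\mathcal P_n$ carries a jump of $f\circ T_\alpha^{k+1}$ of size up to $2\Vert f\Vert_\infty$, and these accumulate at $0$. Only after multiplication by $g_{n,\alpha}$, which vanishes at cylinder boundaries, do these infinitely many contributions become summable.

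Given this, I would prove analogues of Proposition \ref{classeC} and Lemma \ref{technical} for the perturbed weight $\tilde g := e^{\theta S_n f}\, g_{n,\alpha}$. One has $\Vert\tilde g\Vert_\infty \leq e^{n|\theta|}\gamma_\alpha^n$, and since $e^{\theta S_n f}$ is $C^1$ on each cylinder of $\mathcal P_n$, the decomposition $g_{n,\alpha}=h_{n,\alpha}+l_{n,\alpha}$ lifts to $\tilde g = e^{\theta S_n f}h_{n,\alpha} + e^{\theta S_n f}l_{n,\alpha}$: the first summand has the same finite set of jump points $J_{n,\alpha}$ as before, while the second has variation bounded by $e^{n|\theta|}\var l_{n,\alpha}$ plus cross-terms that are absorbed by the pointwise bound above. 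Running the proof of Proposition \ref{LY} verbatim with $\tilde g$ in place of $g_{n,\alpha}$, and using the finite partition $L_1,\dots,L_r$ from Lemma \ref{goodpartition}, one obtains
$$\var \Phi^n_{\alpha,f,\theta}(g) \leq \bigl[(2n+4)\gamma_\alpha^n e^{n|\theta|} + c\,|\theta|\bigr]\var g + D(n,\epsilon,\theta)\,\Vert g\Vert_1,$$
uniformly for $\alpha$ in a small neighbourhood of $\underline\alpha$, for $\Vert f\Vert_{C^1}\leq 1$, and for $|\theta|$ small. Choosing first $n$ so that $(2n+4)\gamma_\alpha^n<1/4$, then $\epsilon$ and $|\theta|$ small enough, yields a single-iterate contraction constant strictly less than $1$, and the inequality for all iterates follows by Euclidean division, as in (\cite{Rychlik}, Lemma 7 and Proposition 1).

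\textbf{Main obstacle.} The delicate point is checking that the constants appearing in the analogue of Lemma \ref{technical} for $\tilde g$ remain uniform in $f$ with $\Vert f\Vert_{C^1}\leq 1$ and in small $|\theta|$. Multiplying by the smooth factor $e^{\theta S_n f}$ introduces no new jump discontinuities, but its derivative grows with $n$, and one must ensure that this growth is offset by the factor $g_{n,\alpha}$ inside the product. The pointwise identity above is the precise mechanism through which the offset takes place; verifying that it carries cleanly through the inductive step in the construction of $h_{n,\alpha}$ and $l_{n,\alpha}$, and in particular that the estimate on $\var_{I_\alpha} l_{n,\alpha}$ scales like $e^{n|\theta|}\cdot 3^n C_0^{n-1}\epsilon$ with no hidden dependence on $f$, is the technical heart of the proof.
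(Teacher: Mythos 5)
Your mechanism is the right one and coincides with the paper's at its core: both arguments start from $\Phi^n_{\alpha,f,\theta}(g)=\Phi^n_\alpha(e^{\theta S_nf}g)$, reduce to estimating $\var(e^{\theta S_nf}\,g\,g_{n,\alpha})$, and exploit exactly your chain-rule observation, which in the paper appears as the expansion
$\bigl(e^{\theta S_nf}/(T_\alpha^n)'\bigr)'=e^{\theta S_nf}\bigl[\theta\sum_{k=0}^{n-1}(f'\circ T_\alpha^k)/\bigl((T_\alpha^{n-k})'\circ T_\alpha^k\bigr)+\bigl(1/(T_\alpha^n)'\bigr)'\bigr]$,
bounded uniformly by $e^{n|\theta|\Vert f\Vert_\infty}\,(|\theta|\Vert f'\Vert_\infty+2)/(1-\gamma_\alpha)$; the endgame (choose $n$ so that $(2n+5)\gamma_\alpha^n<1$, then $|\theta|<\epsilon$, then iterate by Euclidean division) is also the same. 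Where you diverge is the bookkeeping: you propose to rebuild Proposition \ref{classeC} and Lemma \ref{technical} for the perturbed weight $\tilde g=e^{\theta S_nf}g_{n,\alpha}$ and rerun Proposition \ref{LY} verbatim, whereas the paper never needs a perturbed analogue of the decomposition. It works cylinder by cylinder: since $g_{n,\alpha}$ vanishes on $\partial I_j$, one has $\var_{\overline{I_j}}(e^{\theta S_nf}g\,g_{n,\alpha})\le\var_{I_j}(e^{\theta S_nf}g\,g_{n,\alpha})+e^{n|\theta|\Vert f\Vert_\infty}\var_{\overline{I_j}}(g\,g_{n,\alpha})$; the open-cylinder term is handled by Lemma \ref{BVprop}, part 3, applied to the $C^1$ factor $e^{\theta S_nf}/(T_\alpha^n)'$, and after summing over $\mathcal{P}_n$ the boundary term is exactly $e^{n|\theta|\Vert f\Vert_\infty}\var(g\,g_{n,\alpha})$, which is absorbed by quoting the unperturbed estimate \eqref{finalLY} as a black box, so the uniformity in $\alpha$ is inherited wholesale. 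This shortcut also matters because the step you yourself flag as the technical heart is not quite free in your route: $e^{\theta S_nf}$ jumps at every boundary of $\mathcal{P}_n$, so $e^{\theta S_nf}h_{n,\alpha}$ need not have jump set $J_{n,\alpha}$ --- it can jump at cylinder boundaries interior to the support of $h_{n,\alpha}$ where the minimal-variation representative of $h_{n,\alpha}$ does not --- so you would have to enlarge $J_{n,\alpha}$ accordingly and recheck properties a.--c. of Lemma \ref{technical} and the $(n+1)$-per-interval count of Lemma \ref{goodpartition}, and similarly control the boundary jumps of $e^{\theta S_nf}l_{n,\alpha}$. This is doable (those boundaries move with $\alpha$ exactly like the points already tracked in Lemma \ref{technical}), so your plan can be completed, but the paper's factorization through \eqref{finalLY} sidesteps all of it.
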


\begin{proof}
Let us fix $g \in BV$. We have

$$\var_{I_\alpha} \Phi_{\alpha, f, \theta}^n(g) = \var_{I_\alpha} \Phi_{\alpha}^n(e^{\theta S_n f}g)  
\leq \var_{I_\alpha} (e^{\theta S_n f}g \cdot g_{n, \alpha}) = \sum_{j \in \mathcal{P}_n} \var_{\overline{I_j}} (e^{\theta S_n f}g \cdot g_{n, \alpha}) $$
Note that, since $g_{n, \alpha} \mid_{\partial I_j} = 0$,  
$$\var_{\overline{I_j}} (e^{\theta S_n f}gg_{n, \alpha}) \leq
\var_{I_j} (e^{\theta S_n f}gg_{n, \alpha}) + 
e^{n|\theta| \Vert f \Vert_\infty} \var_{\overline{I_j}} (gg_{n, \alpha}) 
$$
Now, by lemma \ref{BVprop}, 3
\begin{small}
$$\var_{I_j} (e^{\theta S_n f}g_{n, \alpha}g) = \var_{I_j} \frac{ e^{\theta S_n f}g}{|(T^n_{\alpha})'|} \leq 
\sup_{I_j} \left| \left(\frac{e^{\theta S_n f}}{(T_\alpha^n)'} \right)' \right| \int_{I_j} |g| + 
\sup_{I_j} \left| \frac{e^{\theta S_n f}}{(T_\alpha^n)'}\right| \var_{I_j} g$$
\end{small}
and by expanding the derivative
\begin{small}
$$ \left( \frac{e^{\theta S_n f}}{(T_\alpha^n)'} \right)' = \frac{ (e^{\theta S_n f})'}{(T_\alpha^n)'} + \left( \frac{1}{(T_\alpha^n)'} \right)' e^{\theta S_n f} = \frac{ e^{\theta S_n f} \theta \sum_{k = 0}^{n-1} (f' \circ T_\alpha^k)(T_\alpha^k)'}{(T_\alpha^n)'} 
+ \left( \frac{1}{(T_\alpha^n)'} \right)' e^{\theta S_n f} = $$
$$ = e^{\theta S_n f} \left[ \theta \sum_{k = 0}^{n-1} \frac{(f' \circ T_\alpha^k)}{[(T_\alpha^{n-k})'\circ T_\alpha^{k}]} + \left( \frac{1}{(T_\alpha^n)'} \right)' \right]  \leq 
 e^{n |\theta|  \Vert f \Vert_\infty} \frac{ |\theta| \Vert f' \Vert_{\infty} + 2}{1-\gamma_\alpha} $$
\end{small}
Moreover, by the estimates of proposition \ref{LY} (eq. \eqref{finalLY}), for each $\underline{\alpha} \in (0,1)$ and each $n$ there exist $\eta, D$ such that
$$\var_{I_\alpha} (g g_{n,\alpha}) \leq (2n+4)\gamma_\alpha^n \var_{I_\alpha} g + D \Vert g \Vert_1 \qquad \forall \alpha \in (\underline{\alpha}-\eta, \underline{\alpha} + \eta)$$
hence by combining all previous estimates
\small
$$\var_{I_\alpha} \Phi_{\alpha, f, \theta}^n(g) \leq (2n+5) e^{n |\theta|  \Vert f \Vert_\infty} \gamma_\alpha^n  \var_{I_\alpha} g +
 e^{n |\theta|  \Vert f \Vert_\infty} \left( \frac{ |\theta| \Vert f' \Vert_{\infty} + 2}{1-\gamma_\alpha} + D \right) \Vert g \Vert_1$$
\normalsize
and the claim follows by choosing some $n$ large enough and iterating.
\end{proof}

\begin{remark}
Notice this is the only place where we need $f \in C^1$. This is because, if $f \in BV$, $e^{\theta S_n f}$ will not in general be of bounded variation.
\end{remark}

We are now ready to draw consequences for the spectral decomposition: let us denote by $\lambda_{\alpha, f}(\theta)$ the eigenvalue of 
$\Phi_{\alpha, f, \theta}$ which is closest to $1$.

\begin{lemma} \label{convergence}
Let $\underline{\alpha} \in (0, 1)$, and suppose we have a family $\{f_\alpha\}_{\alpha \in (0,1)}$ of functions $f_\alpha : [0,1] \rightarrow \mathbb{R}$ of class $C^1$ for every $\alpha$ and such that
\begin{itemize}
 \item[-] $\Vert f_\alpha - f_{\underline{\alpha}} \Vert_\infty \rightarrow 0 \ \textup{for }\alpha \rightarrow \underline{\alpha}$
\item[-] $\sup_{\alpha \in (0,1)} \Vert f'_\alpha \Vert_\infty < \infty$
\end{itemize}
Then there exists $\epsilon > 0$ such that $\lambda_{\alpha, f_\alpha}(\theta)$ converges to $\lambda_{\underline{\alpha}, f_{\underline{\alpha}}}(\theta)$ on $|\theta| < \epsilon$ uniformly in $\theta$ as $\alpha \rightarrow \underline{\alpha}$. 
\end{lemma}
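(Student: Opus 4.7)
The strategy is to apply the Keller--Liverani stability theorem (Theorem \ref{KellerLiv}) to the two-parameter family $\{\Phi_{\alpha, f_\alpha, \theta}\}$, regarding $\alpha$ as the perturbation parameter and treating $\theta$ as an auxiliary parameter that ranges over a small complex disk $\{|\theta|<\epsilon\}$. The key point is that each of the four hypotheses of Theorem \ref{KellerLiv} can be verified with constants $C_1, C_2, C_3, M, \lambda$ and modulus of continuity $\tau$ that are uniform not only in $\alpha$ (near $\underline{\alpha}$) but also in $\theta$ (near $0$). Indeed, Lemma \ref{condition1} supplies hypothesis (1) with $M = e^{\epsilon C}$ where $C$ bounds the $\sup$-norms of the $f_\alpha$'s; the uniform Lasota--Yorke inequality proved just above yields hypothesis (2) with $\lambda, C_2, C_3$ independent of $\theta$, because the bound on $\Vert f_\alpha \Vert_{C^1}$ is uniform by assumption; hypothesis (3) follows from Hennion's argument as in Lemma \ref{henn}; and hypothesis (4) is precisely Lemma \ref{continuitymodule} with continuity modulus
$$\tau(\alpha) \leq C\bigl(|\alpha-\underline{\alpha}|^{1/2} + \Vert f_\alpha - f_{\underline{\alpha}} \Vert_\infty\bigr),$$
which, crucially, does not depend on $\theta$.

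At $\theta = 0$ the operator $\Phi_{\underline{\alpha}, f_{\underline{\alpha}}, 0} = \Phi_{\underline{\alpha}}$ has a simple, isolated eigenvalue at $1$. By analyticity of $\theta \mapsto \Phi_{\underline{\alpha}, f_{\underline{\alpha}}, \theta}$ and standard perturbation theory, after shrinking $\epsilon$ we may fix a small circle $\Gamma \subset \mathbb{C}$ around $1$ that encloses the eigenvalue $\lambda_{\underline{\alpha}, f_{\underline{\alpha}}}(\theta)$ and no other spectrum of $\Phi_{\underline{\alpha}, f_{\underline{\alpha}}, \theta}$, for every $|\theta| < \epsilon$, with $\Gamma$ lying outside the region $V_{\delta, r}$ of Theorem \ref{KellerLiv} (applied to the unperturbed operator at $\theta$). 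Theorem \ref{KellerLiv} then yields, uniformly for $z \in \Gamma$ and $|\theta| < \epsilon$,
$$\vert\!\vert\!\vert (z - \Phi_{\alpha, f_\alpha, \theta})^{-1} - (z - \Phi_{\underline{\alpha}, f_{\underline{\alpha}}, \theta})^{-1} \vert\!\vert\!\vert \leq K\, \tau(\alpha)^{\eta},$$
as $\alpha \to \underline\alpha$. In particular, the spectrum of $\Phi_{\alpha, f_\alpha, \theta}$ inside $\Gamma$ consists of a single simple eigenvalue $\lambda_{\alpha, f_\alpha}(\theta)$ (the one closest to $1$), and the rank-one spectral projector
$$\Pi_{\alpha, f_\alpha, \theta} = \frac{1}{2\pi i} \oint_\Gamma (z - \Phi_{\alpha, f_\alpha, \theta})^{-1}\, dz$$
converges to $\Pi_{\underline{\alpha}, f_{\underline{\alpha}}, \theta}$ in the triple-bar norm uniformly in $\theta$ as $\alpha \to \underline{\alpha}$.

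To extract the eigenvalue itself, use the analogous contour integral for $\Phi \circ \Pi$:
$$\lambda_{\alpha, f_\alpha}(\theta)\,\Pi_{\alpha, f_\alpha, \theta} = \frac{1}{2\pi i} \oint_\Gamma z\,(z - \Phi_{\alpha, f_\alpha, \theta})^{-1}\, dz,$$
which also converges in the triple-bar norm, uniformly in $\theta$. Applying both operators to a fixed test function (for instance the constant function $\mathbf{1}$) and integrating against Lebesgue measure on $I_\alpha$ produces two scalar quantities that converge uniformly in $\theta$; since the denominator $\int \Pi_{\underline{\alpha}, f_{\underline{\alpha}}, \theta}(\mathbf{1})\,dx$ is nonzero for $\theta$ small (by continuity, as it equals $1$ at $\theta = 0$), we recover $\lambda_{\alpha, f_\alpha}(\theta) \to \lambda_{\underline{\alpha}, f_{\underline{\alpha}}}(\theta)$ uniformly on $|\theta| < \epsilon$.

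The main technical point to watch is the simultaneous uniformity in $\alpha$ and $\theta$ of all constants entering Keller--Liverani: the Lasota--Yorke bounds from the preceding proposition must be checked to be independent of $\theta$ (they are, thanks to the explicit form involving $e^{n|\theta|\Vert f\Vert_\infty}$ which is bounded for $|\theta|<\epsilon$ and fixed $n$), and the Keller-distance estimate must likewise be independent of $\theta$, which follows from Lemma \ref{continuitymodule} since the change of variables $y(x)$ used there does not involve $\theta$.
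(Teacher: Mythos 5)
Your argument is correct and takes essentially the same route as the paper: verify the Keller--Liverani hypotheses with constants uniform in both $\alpha$ and $\theta$, represent the spectral projector and the product $\lambda_{\alpha,f_\alpha}(\theta)\,\Pi_{\alpha,f_\alpha,\theta}$ by contour integrals over a fixed small circle around $1$, and deduce uniform convergence of the eigenvalues from the uniform resolvent estimate. The only difference is that you spell out the final scalar-extraction step (pairing with the constant function $\mathbf{1}$ and dividing by $\int \Pi_{\underline{\alpha},f_{\underline{\alpha}},\theta}(\mathbf{1})\,dx$), which the paper leaves implicit.
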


\begin{proof}
Let us fix $r \in (\lambda_0, 1)$ and $\delta$ such that $0 < \delta < \frac{1-r}{2}$. Then the projectors
\begin{equation} \label{proj}
\Pi_{\alpha, f_\alpha, \theta} := \frac{1}{2 \pi i} \oint_{\partial B(1, \delta)} (z- \Phi_{\alpha, f_\alpha, \theta})^{-1} dz
\end{equation}
are defined for $|\alpha - \underline{\alpha}| < \epsilon$ and $|\theta| < \epsilon$ for some $\epsilon$ and for $\delta$ sufficiently small rank($\Pi_{\alpha, f_\alpha, \theta}$) = rank($\Pi_{\underline{\alpha}, f_{\underline{\alpha}}, 0}$) = 1 (\cite{KellerLiverani}, cor. 3) so they are all projections on the $1$-dimensional eigenspace relative to the eigenvalue which is closest to $1$. By Dunford calculus we also have
\begin{equation} \label{lproj}
\lambda_{\alpha, f_\alpha}(\theta)\Pi_{\alpha, f_\alpha, \theta} = \Phi_{\alpha, f_\alpha, \theta} \Pi_{\alpha, f_\alpha, \theta} = \frac{1}{2 \pi i} \oint_{\partial B(1, \delta)} z (z - \Phi_{\alpha, f_\alpha, \theta})^{-1} dz
\end{equation}
By thm. \ref{KellerLiv} and proposition \ref{continuitymodule} there exists $C$ such that for $|\underline{\alpha}-\alpha| < \epsilon$ and $|\theta| < \epsilon$
$$ |\!|\!| (z-\Phi_{\alpha, f_\alpha, \theta})^{-1} - (z - \Phi_{\underline{\alpha}, f_{\underline{\alpha}}, \theta})^{-1} |\!|\!| \leq C \left( |\alpha - \underline{\alpha}|^{1/2} + \Vert f_\alpha - f_{\underline{\alpha}} \Vert_\infty \right)^{\eta}$$
with $\eta > 0$ fixed by thm. \ref{KellerLiv} so by eqns \ref{proj} and \ref{lproj}
$$|\lambda_{\alpha}(\theta) - \lambda_{\underline{\alpha}}(\theta)| = O(|\alpha-\underline{\alpha}|^{1/2} + \Vert f_{\alpha} - f_{\underline{\alpha}} \Vert_\infty )^{\eta}$$
uniformly in $\theta$ as $\alpha \rightarrow \underline{\alpha}$.
\end{proof}

\textit{Proof of theorem \ref{contsigma}.} 
Let $f_\alpha :[0,1] \rightarrow \mathbb{R}$ be $f_\alpha(x) := f(x+\alpha-1) - \int_{\alpha-1}^\alpha f d\mu_\alpha$. 
Since $\tilde{\rho}_\alpha \rightarrow \tilde{\rho}_{\underline{\alpha}}$ in $L^1$ and $f(x+\alpha-1) \rightarrow f(x + \underline{\alpha}-1)$ 
in $L^\infty$, we have $\int_{\alpha-1}^\alpha f d\mu_\alpha \rightarrow \int_{\underline{\alpha}-1}^{\underline{\alpha}} f d\mu_{\underline{\alpha}}$,
 and the family $\{f_\alpha\}$ satisfies the hypotheses of lemma \ref{convergence}, therefore 
$\lambda_{\alpha, f_\alpha}(\theta)$ converges uniformly in a nbd of $\theta = 0$ to 
$\lambda_{\underline{\alpha}, f_{\underline{\alpha}}}(\theta)$. Since all $\lambda_{\alpha, f_\alpha}(\theta)$ are analytic in 
$\theta$ you also have convergence of all derivatives, in particular $\lambda_{\alpha, f_\alpha}''(0) \rightarrow \lambda_{\underline{\alpha}, f_{\underline{\alpha}}}''(0)$. 
We now note that $\int_0^1 f_\alpha(x) \tilde{\rho}_\alpha(x) dx = 0$, which implies, as we have seen in section \ref{spectral},
 that $\lambda''_{\alpha, f_\alpha}(0) = \sigma_{\alpha}^2$.  \qed

\section{Appendix}

Let us recall a few well-known properties of total variation:

\begin{lemma} \label{BVprop}
Let $I \subseteq \mathbb{R}$ be a bounded interval, $J \subseteq I$ a subinterval and $f$ of bounded variation. Then:
\begin{enumerate}
\item $$\sup_{x \in J} |f(x)| \leq \var_{J} f + \frac{1}{m(J)} \int_J |f(x)|dx$$
\item If $g \in BV(J)$, $$\var_J (fg) \leq \sup_{x\in J} |f(x)| \var_J g + \sup_{x \in J} |g(x)| \var_J f$$
\item If $g$ is of class $C^1$ on $J$, $$\var_J(fg) \leq \var_J f \sup_{x \in J}|g(x)| + \sup_{x \in J}|g'(x)|\int_J |f(x)| dx$$
\item  
$$\var_I(f \chi_{J}) \leq \var_J f + 2 \sup_{J} |f|$$
\end{enumerate}
\end{lemma}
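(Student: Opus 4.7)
All four statements are standard properties of total variation, so the plan is just to spell out for each one the elementary argument that does the job. I will treat the representatives of minimal variation throughout, so that $\sup_J|f| = \max(\lim_{y\to x^-}|f(y)|, \lim_{y\to x^+}|f(y)|)$ at each $x$ is controlled by the variation.

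For (1), I would start from the trivial pointwise bound $|f(x)|\le |f(y)|+\var_J f$, valid for all $x,y\in J$ (just inserting $y$ into the definition of total variation as a one-step partition). Integrating in $y$ over $J$ and dividing by $m(J)$ gives $|f(x)|\le \var_J f + m(J)^{-1}\int_J|f(y)|\,dy$, and taking the sup in $x$ yields the claim.

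For (2), I would use the standard telescoping identity $f(x_{i+1})g(x_{i+1})-f(x_i)g(x_i) = f(x_{i+1})(g(x_{i+1})-g(x_i)) + g(x_i)(f(x_{i+1})-f(x_i))$, bound each factor by $\sup_J|f|$ or $\sup_J|g|$, and sum over any increasing sequence in $J$, then take the supremum.

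For (3), the natural thing is to view $f$ as a BV function with associated signed measure $df$ on $J$ and use the product rule for BV$\times C^1$: $d(fg) = g\,df + fg'\,dx$. Taking total variation measures gives $|d(fg)|\le |g|\,|df|+|f|\,|g'|\,dx$, and integrating over $J$ yields $\var_J(fg)\le \sup_J|g|\,\var_J f + \sup_J|g'|\int_J|f|\,dx$. If one prefers to avoid the Stieltjes formalism, the same bound follows from (2) applied on each subinterval of a partition, using $\sum|f(x_{i+1})|(x_{i+1}-x_i)\to \int_J|f|$ as the mesh goes to zero together with the mean value inequality $|g(x_{i+1})-g(x_i)|\le \sup|g'|(x_{i+1}-x_i)$; this is the step that requires the most care, since one has to pass from sums over arbitrary partitions to the Lebesgue integral of $|f|$. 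I expect this to be the main obstacle, and the cleanest route is the Stieltjes one.

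For (4), I would observe that on any increasing sequence in $I$ contained in $J$, the sum computing the variation of $f\chi_J$ agrees with that of $f$, so this portion contributes at most $\var_J f$. The only extra terms come from consecutive points $x_i<x_{i+1}$ for which one lies in $J$ and the other in $I\setminus J$; the value of $f\chi_J$ jumps from $0$ to $f$ (or vice versa), contributing at most $\sup_J|f|$. Since $J$ has at most two endpoints inside $I$, there are at most two such jumps, giving the additive term $2\sup_J|f|$.
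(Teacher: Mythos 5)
Your four arguments are all correct, and in fact the paper never proves this lemma at all: it is stated in the appendix as a collection of well-known facts about total variation, so there is no in-text proof to compare against. Your proofs are the standard ones: (1) the pointwise bound $|f(x)|\le|f(y)|+\var_J f$ averaged in $y$, (2) and (4) the telescoping/partition counting arguments (for (4) the key observation, which you make, is that the points of an increasing sequence lying in the interval $J$ form a consecutive block, so there are at most two crossing terms, each bounded by $\sup_J|f|$), and (3) the product rule $D(fg)=g\,Df+fg'\,dx$ for $BV\times C^1$, legitimate here since the paper fixes representatives of minimal variation, for which $\var_J f$ is the total variation of the measure $Df$. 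You correctly flag the only delicate point: in the elementary route for (3), a sum $\sum_i |f(x_i)|(x_{i+1}-x_i)$ over an arbitrary partition need not be bounded by $\int_J|f|$; this is repaired either by your Stieltjes argument or by refining the partition (which only increases the variation sum) and using that $BV$ functions are Riemann integrable, so tagged sums converge to $\int_J|f|$ as the mesh tends to zero.
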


Let us also prove the basic properties of $T_\alpha$ mentioned in section \ref{cyl}.
\textit{Proof of proposition \ref{classeC}.} 
 \begin{enumerate}
  \item 
$$  \sup_{j \in \mathcal{P}_1} \sup_{x \in I_j} \frac{1}{|(T_{\alpha})'(x)|} = \sup_{x \in [\alpha-1, \alpha]} x^2 \leq \max\{\alpha^2, (\alpha-1)^2 \} $$
The case for $n > 1$ follows from the chain rule for derivatives.  
\item Let $K_n := \sup_{j \in \mathcal{P}_n} \sup_{x \in I_j} \left| g_{n, \alpha}' (x) \right| $. For $n = 1$,
$$K_1 = \sup_{j \in \mathcal{P}_1} \sup_{x \in I_j} \left| \left( \frac{1}{(T_{\alpha})'} \right)'(x) \right| = \sup_{x \in [\alpha-1, \alpha] } 2|x| \leq 2$$ 

Now, 
$$(T_{\alpha}^{n+1})'(x) = (T_{\alpha}^n)'(T_{\alpha}(x))T_{\alpha}'(x)$$
$$(T_{\alpha}^{n+1})''(x) = (T_{\alpha}^n)''(T_{\alpha}(x))[T_{\alpha}'(x)]^2 + (T_{\alpha}^n)'(T_{\alpha}(x))T_{\alpha}''(x)$$
For every $x$ in the interior of some interval $I_j \in \mathcal{P}_{n+1}$, 
$$\left|\frac{(T_{\alpha}^{n+1})''(x)}{[(T_{\alpha}^{n+1})'(x)]^2}\right| \leq \left|\frac{(T_{\alpha}^n)''(T_{\alpha}(x))(T_{\alpha}'(x))^2}{[(T_{\alpha}^n)'(T_{\alpha}(x))T_{\alpha}'(x)]^2} + \frac{(T_{\alpha}^n)'(T_{\alpha}(x))T_{\alpha}''(x)}{[(T_{\alpha}^n)'(T_{\alpha}(x))T_{\alpha}'(x)]^2}\right| \leq $$
$$\leq \left|\frac{(T_{\alpha}^n)''(T_{\alpha}(x))}{[(T_{\alpha}^n)'(T_{\alpha}(x))]^2}\right| + \left|\frac{T_{\alpha}''(x)}{[T_{\alpha}'(x)]^2}\right| \frac{1}{|(T_{\alpha}^n)'(T_{\alpha}(x))|} \leq K_n + K_1 {\gamma_\alpha^n}$$
hence $K_{n+1} \leq K_n + 2{\gamma_\alpha^n}$ and by induction $K_{n} \leq \sum_{k = 0}^{n-1} 2{\gamma_\alpha^k} \leq \frac{2}{1-\gamma_\alpha}$.
\item
By induction on $n$: let $I^{-}_{j_M}$ be the interval of the partition $\mathcal{P}_1$ which contains $\alpha -1$ and $I^{+}_{j_m}$ be the one which contains $\alpha$. \\ For $n = 1$, $T_{\alpha}(I_j)= I_{\alpha}$ for $I_j \neq I^{-}_{j_M}, I^{+}_{j_m}$, hence 

$$\{ T_{\alpha}(I_j) | I_j \in \mathcal{P}_1 \} \subseteq \{ I_{\alpha}, T_{\alpha}(I^{-}_{j_M}), T_{\alpha}(I^{+}_{j_m}) \} $$

Let $n > 1$; consider an element of the partition $\mathcal{P}_{n+1}$, which will be of the form $I_{j_0} \cap T^{-1}(I_{j_1}) \cap \dots \cap T^{-n}(I_{j_n}) \neq \emptyset$, with $I_{j_0}, \dots, I_{j_n} \in \mathcal{P}_1$. If we let $L := I_{j_1} \cap \dots \cap T^{-(n-1)}(I_{j_n})$, we have $L \neq \emptyset$ and $L \in \mathcal{P}_n$. Moreover, one verifies that
$$T_{\alpha}^{n+1}(I_{j_0} \cap T_{\alpha}^{-1}(I_{j_1}) \cap \dots \cap T_{\alpha}^{-n}(I_{j_n})) \subseteq T_{\alpha}^n(I_{j_1} \cap \dots \cap T_{\alpha}^{-(n-1)}(I_{j_n})) = T_{\alpha}^n(L)$$
At this point we have two cases:
\begin{itemize}
 \item if $T_{\alpha}(I_{j_0}) \supseteq L$ then
$$T_{\alpha}^{n+1}(I_{j_0} \cap T_{\alpha}^{-1}(I_{j_1}) \cap \dots \cap T_{\alpha}^{-n}(I_{j_n})) = T_{\alpha}^n(L)$$
\item otherwise we have $T_{\alpha}(I_{j_0}) \nsupseteq L$ but $T_{\alpha}(I_{j_0}) \cap L \neq \emptyset$ (if the intersection is empty, so it is the interval we started with); since $T_{\alpha}(I_j) = I$ for $I_j \neq I^{-}_{j_M}, I^{+}_{j_m}$, this implies $I_{j_0} \in \{I^{-}_{j_M}, I^{+}_{j_m}\}$. Moreover, because $T_{\alpha}(I^{-}_{j_M})$ and $T_ {\alpha}(I^{+}_{j_m})$ are intervals with supremum equal to $\alpha$, there exists at most one interval $I_{\mu}$ of the partition $\mathcal{P}_n$ s.t. $T_{\alpha}(I^{+}_{j_m}) \cap I_{\mu} \neq \emptyset$ and $T_{\alpha}(I^{+}_{j_m}) \nsupseteq I_{\mu}$; in the same way there exists only one interval $I_{\nu}$ of the partition $\mathcal{P}_n$ such that $T_{\alpha}(I^{-}_{j_M}) \cap I_{\nu} \neq \emptyset$ and $T_{\alpha}(I^{-}_{j_M}) \nsupseteq I_{\nu}$, therefore either $L = I_{\mu}$ or $L = I_{\nu}$.
\end{itemize}
In conclusion $\{ T_{\alpha}^{n+1}(I_{j}) \mid I_j \in \mathcal{P}_{n+1} \}$ is contained in 
\begin{small}
$$\{ T_{\alpha}^n(I_j) \mid I_j \in \mathcal{P}_{n} \} \cup T_{\alpha}^{n+1}(I^{+}_{j_m} \cap T_{\alpha}^{-1}(I_{\mu})) \cup T_{\alpha}^{n+1}(I^{-}_{j_M} \cap T_{\alpha}^{-1}(I_{\nu}))$$
\end{small}
hence at every step the cardinality can only increase by at most $2$.
\item Recall that
$$g_{1, \alpha}(x) := \left\{ 
\begin{array}{ll} x^2 & \textup{if }x \textup{ belongs to some }I_j \\
		  0 & \textup{otherwise}
 
\end{array}
\right.$$
hence the claim follows from sommability of the series $\sum \frac{1}{k^2}$.
\end{enumerate}
\qed

\medskip

\noindent \textsc{Department of Mathematics, Harvard University}
\newline \textsc{One Oxford Street, Cambridge MA 02138 USA}
\newline{e-mail: tiozzo@math.harvard.edu}

\end{document}